\def\a{\alpha}
\def\b{\beta}
\def\d{\delta}
\def\D{\Delta}
\def\g{\gamma}
\def\s{\sigma}
\def\t{\theta}
\def\ve{\varepsilon}
\def\vp{\varphi}
\def\p{\partial}
\def\ot{\otimes}
\def\odots{\ot\cdots\ot}
\def\nb{\nabla}
\def\Db{\blacktriangledown}
\def\rt{\triangleright}
\def\lt{\triangleleft}
\def\lbiprod{{>\!\!\!\triangleleft\kern-.33em\cdot\, }}
\def\rbiprod{{\cdot\kern-.33em\triangleright\!\!\!<}}
\def\D{\Delta}
\def\Ad{\mathop{\rm Ad}\nolimits}
\newcommand{\ps}[1]{~\hspace{-4pt}_{^{(#1)}}}
\newcommand{\ns}[1]{~\hspace{-4pt}_{_{{<#1>}}}}
\newcommand{\nsb}[1]{~\hspace{-4pt}_{^{[#1]}}}
\newcommand{\G}[1]{\mathfrak{#1}}
\newcommand{\C}[1]{\mathcal{#1}}
\newcommand{\B}[1]{\mathbb{#1}}
\newcommand{\Hom}{{\rm Hom}}
\newcommand{\tr}{\triangleright}
\renewcommand{\leq}{\leqslant}
\renewcommand{\geq}{\geqslant}
\numberwithin{equation}{section}
\newtheorem{theorem}{Theorem}[section]
\newtheorem{proposition}[theorem]{Proposition}
\newtheorem{corollary}[theorem]{Corollary}
\theoremstyle{definition}
\newtheorem{remark}[theorem]{Remark}
\title{Quantum van Est Isomorphism}
\author{A. Kaygun}
\address{Istanbul Technical University, Istanbul, Turkey}
\email{kaygun@itu.edu.tr}
\author{S. Sütlü}
\address{Gebze Technical University, Kocaeli, Turkey}
\email{serkansutlu@gtu.edu.tr}
\begin{document}

\begin{abstract}
  Motivated by the fact that the Hopf-cyclic (co)homologies of function algebras over Lie groups and  universal enveloping algebras over Lie algebras capture the Lie group and Lie algebra (co)homologies, we hereby upgrade the classical van Est isomorphism to ones between the Hopf-cyclic (co)homologies of quantized algebras of functions and quantized universal enveloping algebras, both in $h$-adic and $q$-deformation frameworks.
\end{abstract}

\maketitle
%

\section*{Introduction}

\subsection*{The van Est Isomorphisms}

Given a connected semisimple Lie group $G$ and a maximal compact subgroup $K$, and their Lie
algebras $\G{g}$ and $\G{k}$, in~\cite{vanEst53} van Est proved that there is an isomorphism
between the continuous Lie group cohomology of $G$, and the Lie algebra cohomology of $\G{g}$
relative to $\G{k}$. In this paper, we construct three different versions of the van Est
isomorphisms in Hopf-cyclic (co)homology using completely different techniques:

\begin{enumerate}[(i)]
\item For the Hopf algebra $\C{O}(G)$ of the regular functions over an algebraic group $G$, or coordinate algebra of a simple matrix Lie group, and
  the enveloping Hopf algebra $U(\G{g})$ of the Lie algebra $\G{g}$ of $G$.
\item For the $h$-adic Hopf algebra of regular functions $\C{O}_h(G)$ of a
  Poisson Lie group $G$, and the $h$-adic enveloping algebra $U_h(\G{g})$ of the Lie
  algebra $\G{g}$ of $G$.
\item For the coordinate Hopf algebra of the $q$-deformation $\C{O}_q(G)$ of a matrix Lie group $G$, and the Drinfeld-Jimbo enveloping algebra $U_q(\G{g})$ of the Lie algebra $\G{g}$ of $G$.
\end{enumerate}

It has been long established that certain classes of Hopf algebras may stand for the quantum
analogues of both Lie groups and Lie algebras. Then in \cite[Prop. 7]{ConnMosc98}, see also \cite[Thm. 15]{ConnMosc}, Connes and
Moscovici showed that the Hopf-cyclic cohomology $HC^*(U(\G{g}),M)$ of the universal
enveloping algebra of a Lie algebra $\G{g}$ with coefficients in a $\G{g}$-module $M$, is
the Lie algebra homology $H_*(\G{g},M)$ of $\G{g}$ with coefficients in the same
$\G{g}$-module $M$. Also proved therein was the analogous isomorphism between the
Hopf-cyclic homology and the Lie algebra
cohomology. 

Once the Hopf-cyclic homology and cohomology are identified as the correct cohomological
context, we first reconstruct the classical van Est isomorphism for the Hopf-cyclic
(co)homology of Lie groups and their Lie algebras. Allowing the full
module-comodule generality on the coefficient space, we obtain the van Est type isomorphisms
\[
HC_\ast(U(\G{g}),U(\G{k}),M^\vee ) \cong HC_\ast(\C{O}(G/K),M^\vee )
\]
in Proposition \ref{prop-vanEst-classical-I}, and
\[
HC^\ast(U(\G{g}),U(\G{k}),M) \cong HC^\ast(\C{O}(G/K),M)
\]
in Proposition \ref{prop-vanEst-classical-II}. More precisely, we identify in the former the Hopf-cyclic homology of the $\C{O}(G)$-comodule algebra $\C{O}(G/K) \subseteq \C{O}(G)$ with coefficients in the SAYD module $M^\vee:=\Hom(M,\B{C})$ over $\C{O}(G)$, with the Hopf-cyclic homology of $U(\G{g})$ relative to
$U(\G{k})$ of the Lie algebra $\G{k}$ of a maximal compact subgroup $K\subseteq G$ with coefficients in the SAYD contra-module $M^\vee$ over $U(\G{g})$. In the latter, on the other hand, we establish an isomorphism between the Hopf-cyclic cohomology of the $\C{O}(G)$-comodule algebra $\C{O}(G/K) \subseteq \C{O}(G)$ with coefficients in the SAYD contra-module $M$ over $\C{O}(G)$, with the Hopf-cyclic homology of $U(\G{g})$ relative to $U(\G{k})$ with coefficients in the SAYD module $M$ over $U(\G{g})$.


Next, we obtain the corresponding van Est isomorphisms for the $h$-adic universal enveloping
algebras of Lie bialgebras and the $h$-adic coordinate algebras of their Poisson-Lie
groups. More precisely, in Theorem \ref{vanEst-I-cohom} and Theorem \ref{vanEst-II-cohom}, we obtain isomorphisms
\[
HC^\ast(U_h(\G{g}),U_h(\G{k}),M) \cong HC^\ast(\C{O}_h(G),M)
\]
and
\[
HC_\ast(U_h(\G{g}),U_h(\G{k}),M^\vee ) \cong HC_\ast(\C{O}_h(G),M^\vee ).
\]
To this end, it sufficed to use the natural $h$-filtration of the complexes to reduce the statement to the classical van Est isomorphism between Lie group (co)homology and (relative) Lie algebra (co)homology.


The $q$-deformation case, however, proved to be substantially different than its $h$-adic
counterpart. In the absence of a natural filtration, we were forced to rely on the dual
pairings between $U_q(g\ell_n)$ and $\C{O}_q(GL(n))$, $U_q^{\text{ext}}(s\ell_n)$ and $\C{O}_q(SL(n))$, $U_{q^{1/2}}(so_{2n+1})$ and $\C{O}_q(SO(2n+1))$, $U_q^{\text{ext}}(so_{2n})$ and $\C{O}_q(SO(2n))$, and finally $U_q^{\text{ext}}(sp_{2n})$ and $\C{O}_q(Sp(2n))$. Then, for each of these pairs $U_q(\G{g})-\C{O}_q(G)$, we obtained in
Theorem \ref{thm-q-adic-vanEst-I} the isomorphism
\[
HC^*(U_q(\G{g}), U_q(\G{k}),M) \cong HC^*(\C{O}_q(G/K),M)
\]
of Hopf-cyclic cohomologies, and in Theorem \ref{thm-q-adic-vanEst-II} the isomorphism
\[
HC_*(U_q(\G{g}), U_q(\G{k}),M^\vee ) \cong HC_*(\C{O}_q(G/K),M^\vee )
\]
of Hopf-cyclic homologies.

\subsection*{The Janus map}

In~\cite{KaygunSutlu:CharMap} we obtained a cup-product like pairing
\[ HC^p_H(H,M)\otimes HC^q_H(A,M)\to HC^{p+q}(A) \] whose ingredients were the Hopf-cyclic
cohomology of a Hopf algebra $H$ with coefficients in a SAYD module $M$, the Hopf-cyclic
cohomology of an $H$-module algebra $A$ with the same coefficient module $M$, and the
ordinary cyclic cohomology of $A$. Even though the pairing is most useful in its
cohomological manifestation it was constructed on the level of (co)cyclic objects:
\begin{equation}
  \label{eq:cm-char-map}
  diag_{\Delta C}(C_\bullet^H(H,M)\otimes C^\bullet_H(A,M))\to C^\bullet(A).
\end{equation}
One of the most interesting properties of the cyclic category $\Delta C$ is that it is
isomorphic to its opposite category $\Delta C^{op}$. For ordinary algebras and coalgebras,
the default (co)cyclic object $C^\bullet$ is possibly non-trivial while its cyclic dual
${}^\circ C^\bullet$ is surely trivial (point-like).  However, this is not necessarily the
case for Hopf algebras and Hopf-equivariant (co)cyclic objects we construct for Hopf-module
(co)algebras. Thus, one can think of the Connes-Moscovici pairing~\eqref{eq:cm-char-map} in
the dual cyclic setting where the target ${}^\circ C^\bullet(A)$ collapses. This fact allows
us to reformulate~\eqref{eq:cm-char-map} as a duality between graded vector spaces of dual
cyclic (co)homologies ${}^\circ HC^*_H(H,M)$ and ${}^\circ HC^*_A(A,M)$.  Thus the van Est
map and the Connes-Moscovici characteristic map become the two different faces of the same
pairing which we now call \emph{the Janus map}.  We investigate this approach in the
Appendix.

\subsection*{Plan of the article} 

The paper is organized in six main sections, and an appendix. The first three chapters
contain the background material, and therefore, can be skipped by specialists.

In Section \ref{sect-vanEst} we recall the classical van Est isomorphism to set up the background and the notation. We recall the (continuous) Lie group cohomology in Subsection \ref{subsect-cont-gr-cohom}, the Lie algebra cohomology in Subsection \ref{subsect-diff-forms-Lie-alg-cohom}, and the classical van Est isomorphism in Subsection \ref{subsect-vanEst}. 

Section \ref{sect-Hopf-cyclic-cohom}, contains the results and definitions we need from various Hopf-cyclic (co)homology theories with coefficients in both SAYD (stable anti-Yetter-Drinfeld) modules and SAYD contra-modules. To be more precise, following the brief survey of the Hopf-cyclic coefficient spaces, namely SAYD modules and SAYD contra-modules in Subsection \ref{subsect-Hopf-cyclic-coeff}, we collect the Hopf-cyclic (co)homologies (with SAYD module coefficients) of module (co)algebras in Subsection \ref{subsect-mod-alg-cohomol}, Subsection \ref{subsect-mod-coalg-cohomol}, and Subsection \ref{subsect-comod-alg-homol}. Then, we recall the Hopf-cyclic (co)homologies (with SAYD contra-module coefficents) for module (co)algebras in Subsection \ref{subsect-module-alg-contra-coeff}, Subsection \ref{subsect-module-coalg-contra-coeff}, and in Subsection \ref{subsect-contra-comod-alg}.

Section \ref{sect-cyclic-cohom-Lie-alg} consists of the technical results on (relative) cyclic (co)homology of Lie algebras that we shall need in the sequel. In Subsection \ref{subsect-unimodular-stable-AYD-over-Lie} we outline stable and unimodular stable AYD modules over Lie algebras, followed by the cyclic homology of a Lie algebra with coefficients in a stable AYD module in Subsection \ref{subsect-cyclic-hom-Lie}. Finally, in Subsection \ref{subsect-cyclic-cohom-Lie}, we recall the cyclic cohomology of a Lie algebra, with unimodular stable AYD module coefficients.

Section \ref{sect-vanEst-for-classical-Hopf} is where we prove the two van Est type isomorphisms for the classical Hopf algebras. We recall in Subsection \ref{subsect-coord-alg-funct} the Hopf-cyclic homology of the coordinate algebra of functions on an algebraic group with coefficients in an SAYD module. Here we also recalled the Hopf-cyclic cohomology of the coordinate Hopf algebra of an algebraic group, but this time with the coefficients in an SAYD contra-module.  Then, in Subsection \ref{subsect-vanEst-classical-Hopf-alg} we prove our first van Est isomorphisms between the Hopf-cyclic (co)homology of the coordinate algebra of functions on an algebraic group, and the relative Hopf-cyclic (co)homology of the universal enveloping algebra of its Lie algebra relative to the universal enveloping algebra of the Lie algebra of a maximal compact subgroup.  

The van Est isomorphisms for $h$-adic quantum groups and their corresponding $h$-adic enveloping algebras are proved in Section \ref{sect-h-adic-vanEst}. We first recall the $h$-adic Hopf algebra of functions for a Poisson-Lie group, and the corresponding $h$-adic enveloping algebra of its Lie (bi)algebra in Subsection \ref{subsect-quaintized-Hopf-alg}. Then in Subsection \ref{subsect-quantized-vanEst}, using the natural $h$-adic filtration on Hopf-cyclic complexes, we prove the van Est isomorphisms between the Hopf-cyclic (co)homology of the $h$-adic Hopf algebra of functions of a Poisson-Lie group, and the relative Hopf-cyclic (co)homology of the $h$-adic enveloping algebra of its Lie algebra relative to the quantized enveloping algebra of a maximal compact subalgebra.

The $q$-deformation analogues of the Hopf-cyclic van Est isomorphisms, on the other hand, are proved in Section \ref{sect-q-adic-vanEst}. We first recall the extended quantum enveloping algebras in Subsection \ref{subsect-Drinfeld-Jimbo}, and the corresponding coordinate algebras of functions on quantum (linear) groups in Subsection \ref{subsect-coord-alg-quant-gr}. Then, using the existence of nondegenerate pairings between these quantum objects, we prove in Subsection \ref{subsect-quant-vanEst-iso} isomorphisms between the Hopf-cyclic (co)homology of the coordinate algebra of a quantum linear group, and the relative Hopf-cyclic (co)homology of the extended quantum enveloping algebra of the corresponding Lie algebra relative to the quantum enveloping algebra of a maximal compact subalgebra.

Finally, in the Appendix we investigate how the Connes-Moscovici characteristic map and the
Hopf-cyclic analogues of the van Est isomorphisms are related via a single pairing, that we
named as the \emph{Janus map}, between Hopf-equivariant (co)cyclic objects.


\section{The van Est map}\label{sect-vanEst}

In the present section we shall recall the classical van Est isomorphism of \cite{vanEst53} from \cite[Sect. 5]{Dupo76}, \cite[Sect. 6]{HochMost62}, and \cite{Most61}. 

Following the conventions of \cite{Dupo76}, let $G$ be a connected semisimple real Lie group, $K \subseteq G$ a maximal compact subgroup, and $\G{g}$ be the Lie algebra of $G$. More generally, $G$ may be allowed to have finitely many connected components, see for instance \cite[Sect. 3]{Most61}.

\subsection{Continuous group cohomology}\label{subsect-cont-gr-cohom}~

Let $V$ be a \emph{continuous $G$-module}; that is, $V$ is a topological $\B{R}$-vector space equipped with a (left) $G$-module structure $G\times V \to V$ given by $(x,v)\mapsto x\cdot v$ which is continuous.

Now, $V$ being finite dimensional, let also $C_c^0(G,V)$ denotes the space of all continuous maps from $G$ to $V$, which is a continuous $G$-module via
\[
(x\rt c)(y) := x\cdot c(x^{-1}y),
\]
for any $x,y\in G$, and any $c\in C_c^0(G,V)$. Next, one defines inductively the  spaces of the higher cochains as 
\[
C^{n+1}_c(G,V):= C^0_c(G,C^n_c(G,V)), \qquad n\geq 0.
\]
Accordingly, for each $n\geq 0$, the space $C^n_c(G,V)$ may be identified with the space of all continuous maps from the product $G\times \ldots \times G$ of $(n+1)$-copies of $G$ to $V$, topologized by the compact-open topology, \cite[Sect. 2]{HochMost62}. Furthermore, each $C^n_c(G,V)$ is endowed with a continuous $G$-module structure given by
\[
(x\rt c)(y_0,\ldots,y_n) = x\cdot c(x^{-1}y_0,\ldots, x^{-1}y_n),
\] 
for any $x,y_0, \ldots, y_n \in G$. 

Finally, setting
\[
d:C^n_c(G,V) \to C^{n+1}_c(G,V), \qquad (d c)(x_0,\ldots,x_{n+1}) := \sum_{j=0}^{n+1}\,(-1)^j\,c(x_0,\ldots, \widehat{x}_j,\ldots, x_{n+1}),
\]
we arrive at an injective resolution
\begin{equation}\label{resolution-group}
\xymatrix{
0\ar[r] & V \ar[r] & C^0_c(G,V) \ar[r]^d & C^1_c(G,V) \ar[r]^{\,\,\,\,d} & \ldots
}
\end{equation}
of $V$, called the \emph{homogeneous resolution}, \cite[Sect. 2]{HochMost62}. Now, the homology of the $G$-invariant part of this (continuously) injective resolution\footnote[1]{As is shown in \cite[Sect. 2]{HochMost62} the homology of the $G$-fixed part is independent of the (continuously) injective resolution of the coefficient space.} is called the \emph{continuous group cohomology} of $G$ with coefficients in $V$, and it is denoted by $H^\ast_c(G,V)$.

As for the $G$-invariant part $C^n_c(G,V)^G$ of the space $C^n_c(G,V)$ of continuous $n$-cochains, it is worth mentioning that it may be identified with the space $C^{n-1}_c(G,V)$ of continuous $(n-1)$-cochains via
\[
\phi:C^n_c(G,V)^G\to C^{n-1}_c(G,V), \qquad \phi(c)(x_1,\ldots,x_n) := c(1,x_1,x_1x_2,\ldots,x_1\cdots x_n), \qquad n\geq 1
\]
whose inverse is given by
\[
\psi:C^{n-1}_c(G,V)\to C^n_c(G,V)^G, \qquad \psi(c)(x_0,\ldots,x_n) := x_0\cdot c(x_0^{-1}x_1,x_1^{-1}x_2,\ldots,x_{n-1}^{-1}x_n).
\]
In particular, $C^0_c(G,V)^G \cong V$ by $f\mapsto f(1)$.

As such, the continuous group cohomology may be computed by the \emph{non-homogeneous cochains} complex 
\[
\Big(C^\ast_c(G,V)^G,\,\d\Big) := \left(\bigoplus_{n\geq 0}\,C^n_c(G,V)^G,\, \d\right)
\]
where
\begin{align*}
& \d:C^n_c(G,V)^G \to C^{n+1}_c(G,V)^G, \qquad n\geq 1, \\
& \d c(x_1,\ldots,x_{n+1}) = \\
& \hspace{1.5cm} x_1\cdot c(x_2,\ldots,x_{n+1}) + \sum_{j=1}^n\,(-1)^{j+1}\,c(x_1,\ldots,x_jx_{j+1},\ldots x_{n+1}) + (-1)^{n+1}\, c(x_1,\ldots,x_n),
\end{align*}
and
\[
\d:C^0_c(G,V)^G \to C^1_c(G,V)^G, \qquad (\d v)(x) = x\cdot v - v. 
\]

\subsection{Differential forms and (relative) Lie algebra cohomology}\label{subsect-diff-forms-Lie-alg-cohom}~

Let, now, the $G$-action on $V$ be differentiable in the sense of \cite[Sect. 4]{HochMost62}, and $F_d(G,V)$ be the space of all differentiable (once again, in the sense of \cite[Sect. 4]{HochMost62}) maps from $G$ to $V$, which is topologized in such a way that a fundamental system of neighborhoods of 0 consists of the sets
\[
N(C,E,U) :=\{f\in F_d(G,V)\mid \d(f)(C)\subseteq U,\,\, \forall\,\d\in E\}
\]
where $C\subseteq G$ is a compact set, $E$ is a (finite) set of differential operators on $F_d(G,\B{R})$, and $U$ is a neighborhood of 0 in $V$. Accordingly, $F_d(G,V)$ is a continuous $G$-module with the $G$-action $G\times F_d(G,V) \to F_d(G,V)$ being $(x\cdot f) (y):= f(yx)$ for any $x,y\in G$, and any $f\in F_d(G,V)$.

Next, let $A^n(G,V)$ be the space of $V$-valued differential $n$-forms on $G$, which may be identified (regarding the Lie algebra elements as linear derivations on the algebra $F_d(G,\B{R})$ of differentiable functions on $G$) with $\wedge^n \G{g}^\vee \ot F_d(G,V)$, under which the $G$-action concentrates on $F_d(G,V)$, where $\G{g}^\vee:={\rm Hom}(\G{g},\B{R})$ refers to the linear dual, and
\begin{align}\label{Lie-alg-cohom-coboundary}
\begin{split}
& d:A^n(G,V)\to A^{n+1}(G,V), \\
&(d\a)(\xi_0,\ldots,\xi_n) :=\\
& \sum_{j=0}^n\,(-1)^j\,\xi_j\Big(\a(\xi_0,\ldots,\widehat{\xi}_j,\ldots, \xi_n)\Big) + \sum_{r<s}\,(-1)^{r+s}\,\a([\xi_r,\xi_s],\xi_0,\ldots,\widehat{\xi}_r,\ldots,\widehat{\xi}_s,\ldots,\xi_n)
\end{split}
\end{align}
for any $\a \in A^n(G,V)$, and any $\xi_0,\ldots,\xi_n\in \G{g}$. As such, we arrive at a differential complex 
\[
\xymatrix{
0\ar[r] & V\ar[r] &  A^0(G,V) \ar[r]^d & A^1(G,V) \ar[r]^{\,\,\,\,d} & \ldots
}
\]
where, $A^n(G,V)$ has the structure of a $G$-module given by
\[
(x\rt \a)(\xi_1,\ldots,\xi_n) := x\cdot \a(x^{-1}\rt\xi_1,\ldots,x^{-1}\rt\xi_n),
\]
for any $x\in G$, and any $\xi_1,\ldots,\xi_n \in \G{g}$, where
\[
(x\rt \xi)(f) := x\rt \xi(x^{-1}\rt f),
\]
for any $f\in F_d(G,\B{R})$, and 
\[
(x\rt f)(y) := f(x^{-1}y),
\]
for any $x,y\in G$. Then \eqref{Lie-alg-cohom-coboundary} respects the $G$-action, and the homology of the $G$-fixed part $A^n(G,V)^G\cong \wedge^n\G{g}^\vee \ot V$ captures the Lie algebra cohomology $H^\ast(\G{g},V)$ with coefficients in $V$.

Accordingly, the space $A^n(G/K,V)$ of $V$-valued differential $n$-forms on $G/K$ may be identified with $\wedge^n(\G{g}/\G{k})^\vee\ot F_d(G/K,V)$, where $\G{k}$ is the Lie algebra of $K$, and $F_d(G/K,V)\cong F_d(G,V)^K$. As such, the compatibility of \eqref{Lie-alg-cohom-coboundary} with the $G$-actions induces $d:A^n(G/K,V) \to A^{n+1}(G/K,V)$. 

Furthermore, as a result of the Poincar\'e Lemma (see also for instance \cite[Thm. 3.6.1]{Most61}, or \cite[Thm. 3.2]{Most55}),  
\begin{equation}\label{resolution-Lie-alg}
\xymatrix{
0\ar[r] & V \ar[r] & A^0(G/K,V) \ar[r]^d & A^1(G/K,V) \ar[r]^{\,\,\,\,\,\,\,\,\,\,\,\,d} & \ldots
}
\end{equation}
is a (continuously) injective resolution of $V$, \cite[Sect. 6]{HochMost62}. Finally, $A^n(G/K,V)^G\cong V\ot\wedge^n(\G{g}/\G{k})^\vee$, see  also \cite[Thm. 3.1]{HochKost62}, with the coboundary operator induced from \eqref{Lie-alg-cohom-coboundary}, reveals that the homology of the $G$-invariant part of the resolution \eqref{resolution-Lie-alg} coincides with the relative Lie algebra cohomology $H^\ast(\G{g},\G{k},V)$.

\subsection{The van Est isomorphism}\label{subsect-vanEst}~

Having two (continuously) injective resolutions \eqref{resolution-group} and \eqref{resolution-Lie-alg} of $V$, the homologies of their $G$-invariant parts are isomorphic; \cite[Thm. 6.1]{HochMost62}, \cite[3.6.1]{Most61}, and \cite{vanEst53}. The explicit isomorphism that identifies the cohomologies, on the other hand, is given in \cite[Thm. 5.1]{Dupo76}.

Along the lines of \cite{Dupo76}, we begin with $o:=\{K\}$, and continue for any $x_0\in G$ with the 0-simplex $\overline{\D}(x_0):=x_0^{-1}\cdot o$. Inductively (and relying on the fact that $G/K$ is diffeomorphic to an euclidean space), then, it is possible to set $\overline{\D}(x_0,\ldots,x_n)$ to be the geodesic cone of $\overline{\D}(x_1,\ldots,x_n)$ and $x_0^{-1}\cdot o$, the latter being the top point. 

Accordingly, there is a map
\[
\Phi:A^n(G/K,V) \to C_c^n(G,V), \qquad \Phi(\a)(x_0,\ldots,x_n):=\int_{\overline{\D}(x_0,\ldots,x_n)}\,\a,
\]
for any $\a \in A^n(G/K,V)$, and any $x_0,\ldots,x_n \in G$, which is a $G$-equivariant map that commutes with the respective differentials; \cite[Thm. 5.1]{Dupo76}. As such, it induces
\[
\Phi:A^n(G/K,V)^G \to C_c^n(G,V)^G,
\]
via which the isomorphism $H^\ast(\G{g},\G{k},V) \cong H^\ast_c(G,V)$ is achieved.

\section{Hopf-cyclic (co)homology with coefficients}\label{sect-Hopf-cyclic-cohom}

In this section we shall collect from \cite{Brze11,ConnMosc,HajaKhalRangSomm04-II,Hass14,HassKhalShap19,Rang11,KobyShap19} various Hopf-cyclic homology and Hopf-cyclic cohomology theories, with coefficients. Throughout the present section $k$ will stand for a field of characteristic zero, and the vector spaces will be assumed to be over it.

\subsection{Coefficient spaces for Hopf-cyclic cohomology}\label{subsect-Hopf-cyclic-coeff}~~

Let us first recall from \cite{HajaKhalRangSomm04-I} the notion of a stable anti-Yetter Drinfeld (SAYD in short) module.

Let $H$ be a $k$-Hopf algebra, and $M$ a right $H$-module and a left $H$-comodule, say by $M\ot H \to M$, $m \ot h\mapsto mh$ and $\nb:M\to H\ot M$, $m \mapsto m\ns{-1}\ot m\ns{0}$. Assume further that the two structures are compatible as 
\[
\nb(mh) = S(h\ps{3})m\ns{-1}h\ps{1} \ot m\ns{0}h\ps{2}, \qquad m\ns{0}m\ns{-1} = m
\]
for any $m\in M$, and any $h\in H$. $M$ is then said to be a \emph{right/left SAYD module} over $H$. See \cite[Def. 2.1]{HajaKhalRangSomm04-I} for the left/right, left/left and right/right versions.

We next recall the notion of a (right) contra-module over a coalgebra from \cite{Brze11}, see also \cite{BohmBrzeWisb09,Positselski-book,Rang11}.

A vector space $M$ along with a linear map $\a:\Hom(C,M)\to M$ that fits into the  (commutative) diagrams
\[
\xymatrix{
\Hom(C,\Hom(C,M))\ar[d]_\cong\ar[rrrr]^{\Hom(C,\a)} &&& &\Hom(C,M) \ar[d]^\a \\
 \Hom(C\ot C,M) \ar[rr]^{\Hom(\D,M)} & &\Hom(C,M) \ar[rr]^\a  & & M 
}
\]
where the vertical isomorphism on the left is the usual hom-tensor adjunction, and
\[
\xymatrix{
\Hom(k,M) \ar[rd]_\cong\ar[rr]^{\Hom(\ve,M)} & &  \Hom(C,M)\ar[dl]^\a \\
 &M  &
 }
\]
is called a \emph{(right) contra-module} over a coalgebra $C$.

Given a Hopf algebra $H$, a left $H$-module right $H$-contra-module $M$ is called a \emph{left/right SAYD contra-module} if 
\[ 
h\cdot\a(f) = \a(h\ps{2}\cdot f(S(h\ps{3})(\_)h\ps{1})), \qquad\a(r_\mu) = \mu
\]
for any $\mu\in M$, any $h\in H$, and any $f\in \Hom(H,M)$, where $r_\mu:H\to M$ is the mapping $h\mapsto h\cdot \mu$.

As is known, see for instance \cite{Brze11,Hass14,Rang11}, if $M$ is a left $C$-comodule by $\nb:M\to C\ot M$, then $M^\vee :=\Hom(M,k)$ is a right $C$-contra-module by
\[
\a:=\Hom(\nb,k):\Hom(C,M^\vee ) = \Hom(C,\Hom(M,k)) \cong \Hom(C\ot M,k) \to \Hom(M,k) = M^\vee ,
\]
more explicitly,
\[
\a(f)(m) = f(m\ns{-1} \ot m\ns{0}),
\]
for any $f\in \Hom(C,M^\vee )$, and any $m\in M$. Furthermore, if $M$ is a right/left SAYD module over $H$, then $M^\vee :=\Hom(M,k)$ is a left/right SAYD contra-module over $H$.

In the following three subsections we shall recall from \cite{HajaKhalRangSomm04-II} the Hopf-cyclic (co)homology, with SAYD module coefficients, associated to module algebra, module coalgebra, and comodule algebra symmetries.

\subsection{Hopf-cyclic cohomology of module algebras}\label{subsect-mod-alg-cohomol}~

Let $H$ be a Hopf algebra, and $A$ a (left) $H$-module algebra (say, by $\rt:H\ot A\to A$). Then,
\[
C_H(A,M) := \bigoplus_{n\geq0}\,C^n_H(A,M), \qquad C^n_H(A,M):=\Hom_H(M \ot A^{\ot\,(n+1)},k),
\]
where 
\[
(h\rt \vp)(m\ot \tilde{a}) := \vp(mh\ps{1} \ot S(h\ps{2})\cdot \tilde{a})
\]
for any $\tilde{a}:= a_0\odots a_n \in A^{\ot\,(n+1)}$, any $h \in H$, and any $m \in M$, may be endowed with a cocyclic structure by the cofaces
\begin{align*}
& d_i:C^{n-1}_H(A,M) \to C^n_H(A,M), \qquad 0\leq i \leq n, \\
& (d_i \vp)(m \ot a_0\odots a_n) := \vp(m\ot a_0 \odots a_ia_{i+1} \odots a_n), \qquad 0\leq i \leq n-1, \\
& (d_n \vp)(m \ot a_0\odots a_n) := \vp(m\ns{0} \ot (S^{-1}(m\ns{-1})\rt a_n)a_0 \ot a_1 \odots a_{n-1}),
\end{align*}
the codegeneracies
\begin{align*}
& s_j:C^{n+1}_H(A,M) \to C^n_H(A,M), \qquad 0\leq j \leq n, \\
& (s_j \vp)(m \ot a_0\odots a_n) := \vp(m\ot a_0 \odots a_j\ot 1 \ot a_{j+1} \odots a_n), 
\end{align*}
and the cyclic operator
\begin{align*}
& t_n:C^n_H(A,M) \to C^n_H(A,M),  \\
& (t_n \vp)(m \ot a_0\odots a_n) := \vp(m\ns{0} \ot S^{-1}(m\ns{-1})\rt a_n \ot a_0 \odots a_{n-1}).
\end{align*}
The cyclic (resp. periodic cyclic) homology of the above cocyclic module is denoted by $HC^\ast_H(A,M)$ (resp. $HP^\ast_H(A,M)$), and it is called the \emph{(periodic) Hopf-cyclic cohomology of the $H$-module algebra $A$, with coefficients in $M$}.

\subsection{Hopf-cyclic cohomology of module coalgebras}\label{subsect-mod-coalg-cohomol}~

Let $H$ be a Hopf algebra, and $C$ a left $H$-module coalgebra (say by $C\ot H\to H$, $c\ot h\mapsto c\cdot h$). Then, 
\[
C_H(C,M) := \bigoplus_{n\geq0}\,C^n_H(C,M), \qquad C^n_H(C,M):=M \ot_H C^{\ot\,(n+1)},
\]
where 
\[
h\rt\tilde{c} :=h\ps{1} \cdot c^0\ot h\ps{2}\cdot c^1 \odots h\ps{n+1}\cdot c^n
\]
for any $\tilde{c}:= c^0\odots c^n \in C^{\ot\,(n+1)}$, and any $h \in H$, may be endowed with a cocyclic structure by the cofaces
\begin{align*}
& d_i:C^{n-1}_H(C,M) \to C^n_H(C,M), \qquad 0\leq i \leq n, \\
& d_i (m \ot_H c^0\odots c^{n-1}) := m\ot_H c^0 \odots c^i\ps{1}\ot c^i\ps{2} \odots c^{n-1}, \qquad 0\leq i \leq n-1, \\
& d_n (m \ot_H c^0\odots c^{n-1}) := m\ns{0} \ot_H c^0\ps{2} \ot c^1 \odots c^{n-1} \ot m\ns{-1}\cdot c^0\ps{1},
\end{align*}
the codegeneracies
\begin{align*}
& s_j:C^{n+1}_H(C,M) \to C^n_H(C,M), \qquad 0\leq j \leq n, \\
& s_j (m \ot_H c^0\odots c^n) := m\ot_H c^0 \odots c^j\ot  \ve(c^{j+1}) \odots c^n, 
\end{align*}
and the cyclic operator
\begin{align*}
& t_n:C^n_H(C,M) \to C^n_H(C,M),  \\
& t_n(m \ot_H c^0\odots c^n) := m\ns{0} \ot_H c^1 \odots c^{n} \ot m\ns{-1}\cdot c^0.
\end{align*}
The cyclic (resp. periodic cyclic) homology of the above cocyclic module is denoted by $HC^\ast_H(C,M)$ (resp. $HP^\ast_H(C,M)$), and it is called the \emph{(periodic) Hopf-cyclic cohomology of the $H$-module coalgebra $C$, with coefficients in $M$}.

We next record the relative theory. Given a Hopf subalgebra $K\subseteq H$, let 
\begin{equation}\label{C-quotient-coalg}
\C{C}:=H\ot_K k \cong H/HK^+,
\end{equation}
where $\ve:H\to k$ being the counit of $H$, $K^+:=\ker\ve\vert_K$ is the augmentation ideal. The quotient coalgebra $\C{C}$ is a left $H$-module coalgebra by
\[
g \cdot \overline{h} := \overline{gh},
\]
and its Hopf-cyclic cohomology is called the relative Hopf-cyclic cohomology (with coefficients), \cite[Thm. 12]{ConnMosc}. More precisely, in this case
\[
C^n_H(\C{C},M) = M\ot_H \C{C}^{\ot\,n+1} \cong M \ot_K \C{C}^{\ot\,n} =:C^n(H,K,M)
\]   
via
\[
\Phi:C^n_H(\C{C},M) \to C^n(H,K,M), \qquad \Phi(m\ot_H c^0\odots c^n) := mh^0\ps{1}\ot_K S(h^0\ps{2})\cdot(c^1\odots c^n),
\]
considering $c^0 = \overline{h^0} \in \C{C}$, for $h^0\in H$. The cocyclic structure on 
\[
C(H,K,M) := \bigoplus_{n\geq 0}C^n(H,K,M)
\]
is given by the cofaces
\begin{align*}
& d_i:C^{n-1}_H(H,K,M) \to C^n_H(H,K,M), \qquad 0\leq i \leq n, \\
& d_0 (m \ot_K c^1\odots c^{n-1}) := m\ot_K \overline{1} \odots c^1 \odots c^{n-1}, \\
& d_i (m \ot_K c^1\odots c^{n-1}) := m\ot_K c^0 \odots c^i\ps{1}\ot c^i\ps{2} \odots c^{n-1}, \qquad 1\leq i \leq n-1, \\
& d_n (m \ot_K c^1\odots c^{n-1}) := m\ns{0} \ot_K c^1 \odots c^{n-1} \ot m\ns{-1},
\end{align*}
the codegeneracies
\begin{align*}
& s_j:C^{n+1}_H(H,K,M) \to C^n_H(H,K,M), \qquad 0\leq j \leq n, \\
& s_j (m \ot_K c^1\odots c^{n+1}) := m\ot_K c^1 \odots c^j\ot  \ve(c^{j+1}) \odots c^{n+1}, 
\end{align*}
and the cyclic operator
\begin{align*}
& t_n:C^n_H(H,K,M) \to C^n_H(H,K,M),  \\
& t_n(m \ot c^1\odots c^n) := m\ns{0}h^1\ps{1} \ot S(h^1\ps{2})\cdot (c^2 \odots c^{n-1} \ot m\ns{-1}),
\end{align*}
considering $\C{C}\ni c^1 = \overline{h^1}$ with $h^1\in H$. The cyclic (resp. periodic cyclic) homology of the above cocyclic module is denoted by $HC^\ast(H,K,M)$ (resp. $HP^\ast(H,K,M)$), and it is called the \emph{relative (periodic) Hopf-cyclic cohomology of $H$ relative to $K\subseteq H$, with coefficients in $M$}.

\subsection{Hopf-cyclic homology of comodule algebras}\label{subsect-comod-alg-homol}~

Let $H$ be a Hopf algebra, $A$ a right $H$-comodule algebra, say by  
\[
\Db:A\to A\ot H, \qquad \Db(a):=a^{\ps{0}}\ot  a^{\ps{1}},
\]
and let $M$ be a left/left SAYD module over $H$. There is, then, a cyclic structure on the complex
\[
C^H(A,M) := \bigoplus_{n\geq 0} C^H_n(A,M), \qquad C^H_n(A,M) := A^{\ot\,n+1}\,\square_H\,M
\]
given by the faces
\begin{align*}
& \d_i:C^H_n(A,M) \to C^H_{n-1}(A,M), \qquad 0\leq i \leq n, \\
& \d_i (a_0\odots a_n \ot m) := a_0 \odots a_ia_{i+1} \odots a_n\ot m, \qquad 0\leq i \leq n-1, \\
& \d_n(a_0\odots a_n\ot m) := a_n^{\ps{0}}a_0 \ot a_1 \odots a_{n-1}\ot  a_n^{\ps{1}}m,
\end{align*}
the codegeneracies
\begin{align*}
& \s_j:C^H_n(A,M) \to C^H_{n+1}(A,M), \qquad 0\leq j \leq n, \\
& \s_j(a_0\odots a_n\ot m) := a_0 \odots a_j\ot 1 \ot a_{j+1} \odots a_n\ot m, 
\end{align*}
and the cyclic operator
\begin{align*}
& \tau_n:C^H_n(A,M) \to C^H_n(A,M),  \\
& \tau_n (a_0\odots a_n\ot m) := a_n^{\ps{0}}\ot a_0 \ot a_1 \odots a_{n-1} \ot a_n^{\ps{1}}m.
\end{align*}
The cyclic (resp. periodic cyclic) homology of the above cocyclic module is denoted by $HC^H_\ast(A,M)$ (resp. $HP^H_\ast(A,M)$), and it is called the \emph{(periodic) Hopf-cyclic homology of the $H$-comodule algebra $A$, with coefficients in $M$}.

On the last three subsections, on the other hand, we shall record from \cite{Brze11,Rang11} the Hopf-cyclic (co)homologies, with contra-module coefficients, of module algebras, module coalgebras, and comodule algebras.

\subsection{Hopf-cyclic cohomology (with contramodule coefficients) of module algebras}\label{subsect-module-alg-contra-coeff}~

Given a left $H$-module algebra $A$, and a right/left SAYD module $M$ over $H$, let $C^n_H(A,M^\vee ) := \Hom_H(A^{\ot\,(n+1)},M^\vee )$ be the space of left $H$-linear maps. Then, it follows from \cite[Prop. 2.2]{Rang11} that the isomorphisms
\[
\C{I}:C^n_H(A,M)\to C^n_H(A,M^\vee ), \qquad \C{I}(\vp)(a_0\odots a_n)(m) := \vp(m\ot a_0 \odots a_n)
\]
and
\[
\C{J}:C^n_H(A,M^\vee )\to C^n_H(A,M), \qquad \C{J}(\phi)(m\ot a_0\odots a_n) := \phi(a_0 \odots a_n)(m)
\]
pull the cocyclic structure on $C_H(A,M)$ onto 
\[
C_H(A,M^\vee ) := \bigoplus_{n\geq0}\,C^n_H(A,M^\vee ), \qquad C^n_H(A,M^\vee ):=\Hom_H(A^{\ot\,(n+1)},M^\vee ).
\]
The resulting cocyclic structure is given explicitly by the cofaces
\begin{align*}
& \G{d}_i:C^{n-1}_H(A,M^\vee ) \to C^n_H(A,M^\vee ), \qquad 0\leq i \leq n, \\
& (\G{d}_i \phi)(a_0\odots a_n) := \phi(a_0 \odots a_ia_{i+1} \odots a_n), \qquad 0\leq i \leq n-1, \\
& (\G{d}_n \phi)(a_0\odots a_n) := \a(\phi( (S^{-1}(\_)\rt a_n)a_0 \ot a_1 \odots a_{n-1})),
\end{align*}
the codegeneracies
\begin{align*}
& \G{s}_j:C^{n+1}_H(A,M^\vee ) \to C^n_H(A,M^\vee ), \qquad 0\leq j \leq n, \\
& (\G{s}_j \phi)(a_0\odots a_n) := \phi( a_0 \odots a_j\ot 1 \ot a_{j+1} \odots a_n), 
\end{align*}
and the cyclic operator
\begin{align*}
& \G{t}_n:C^n_H(A,M^\vee ) \to C^n_H(A,M^\vee ),  \\
& (\G{t}_n \phi)(a_0\odots a_n) := \a(\phi(S^{-1}(\_)\rt a_n \ot a_0 \odots a_{n-1})).
\end{align*}
The cyclic (resp. periodic cyclic) homology of this cocyclic module is denoted by $HC^\ast_H(A,M^\vee )$ (resp. $HP^\ast_H(A,M^\vee )$), and it is called the \emph{(periodic) Hopf-cyclic cohomology of the $H$-module algebra $A$, with coefficients in $M^\vee $}.

\begin{remark}\label{remark-mod-alg-contra-coeff}
Let $H$ be a Hopf algebra, $A$ a left $H$-module algebra, and $M$ a right/left SAYD module over $H$. In view of the hom-tensor adjunction
\[
\Hom_H(A^{\ot\,n+1},M^\vee ) \cong \Hom(M\ot_H A^{\ot\,n+1}, k),
\]
and hence the pairing
\[
\langle \,,\rangle: C^n_H(A,M^\vee )\ot C_{n,H}(A,M) \to k, \qquad \langle\vp,m\ot_H a_0\odots a_n\rangle := \vp(a_0\odots a_n)(m),
\] 
the Hopf-cyclic cohomology with SAYD contra-module coefficients of a module algebra is obtained by dualizing the Hopf-cyclic homology of the same (module-)algebra with SAYD coeffcients\footnote{The Hopf-cyclic homology of the (left) $H$-module algebra $A$, with coeffcients in the SAYD module $M$ over $H$ is computed by the complex
\[
C_H(A,M):= \bigoplus_{n\geq 0}C_{n,H}(A,M), \qquad C_{n,H}(A,M):=M\ot_H A^{\ot\,n+1}.
\]}. 
\end{remark}

\subsection{Hopf-cyclic homology (with contramodule coefficients)  of module coalgebras}\label{subsect-module-coalg-contra-coeff}~

Given a left $H$-module coalgebra $C$, and a right/left SAYD module $M$ over $H$, let $C_{n,H}(C,M^\vee ) := \Hom_H(C^{\ot\,(n+1)}, M^\vee)$ be the space of left $H$-linear maps from $C^{\ot\,(n+1)}$ to $M^\vee$. Then, it follows from \cite[Sect. 4]{Brze11} that the cocyclic structure on $C_H(C,M)$ induces a cyclic structure on the complex
\[
C_H(C,M^\vee ) := \bigoplus_{n\geq0}\,C_{n,H}(C,M^\vee )
\]
via the pairing
\begin{equation}\label{pairing-coalgebra-complexes}
\langle\, ,\,\rangle:C_{n,H}(C,M^\vee )\ot C_H^n(C,M) \to k, \qquad \langle \psi,\,m\ot_H c^0\odots c^n\rangle:=\psi(c^0 \odots c^n)(m).
\end{equation}
The resulting cyclic structure, then, may be given by the faces
\begin{align*}
& \d_i:C_{n,H}(C,M^\vee ) \to C_{n-1,H}(C,M^\vee ), \qquad 0\leq i \leq n, \\
& (\d_i \psi)(c^0\odots c^{n-1}) := \psi(c^0 \odots c^i\ps{1}\ot c^i\ps{2} \odots c^{n-1}), \qquad 0\leq i \leq n-1, \\
& (\d_n \psi)(c^0\odots c^{n-1}) := \a(\psi( c^0\ps{2}\ot c^1 \odots c^{n-1} \ot (\_)\cdot c^0\ps{1})),
\end{align*}
the degeneracies
\begin{align*}
& \s_j:C_{n,H}(C,M^\vee ) \to C_{n+1,H}(C,M^\vee ), \qquad 0\leq j \leq n, \\
& (\s_j \psi)(c^0\odots c^{n+1}) := \ve(c^{j+1})\psi(c^0 \odots c^j\ot c^{j+2} \odots c^{n+1}), 
\end{align*}
and the cyclic operator
\begin{align*}
& \tau_n:C_{n,H}(C,M^\vee ) \to C_{n,H}(C,M^\vee ),  \\
& (\tau_n \psi)(c^0\odots c^n) := \a(\psi(c^1 \odots c^n \ot (\_)\cdot c^0)).
\end{align*}
The cyclic (resp. periodic cyclic) homology of this cyclic module is denoted by $HC_{\ast,H}(C,M^\vee )$ (resp. $HP_{\ast,H}(C,M^\vee )$), and it is called the \emph{(periodic) Hopf-cyclic homology of the $H$-module coalgebra $C$, with coefficients in $M^\vee $}.

Now, given a Hopf-subalgebra $K\subseteq H$, let $\C{C}$ be the left $H$-module coalgebra of \eqref{C-quotient-coalg}. It then follows at once that 
\[
C_{n,H}(\C{C},M^\vee ) \cong C_n(H,K,M^\vee ) := \Hom_K(\C{C}^{\ot\,n},M^\vee )
\]
via 
\begin{align*}
& \Lambda: C_{n,H}(\C{C},M^\vee ) \to C_n(H,K,M^\vee ),\\
& \Lambda(\psi)(c^1\odots c^n) := \psi(\overline{1}\ot c^1\odots c^n),
\end{align*}
whose inverse is given by
\begin{align*}
& \Lambda^{-1}: C_n(H,K,M^\vee ) \to C_{n,H}(\C{C},M^\vee ),\\
& \Lambda^{-1}(\phi)(c^0\odots c^n) := h^0\ps{1}\rt \phi(S(h^0\ps{2})\cdot (c^1\odots c^n)),
\end{align*}
for any $\phi\in C_n(H,K,M^\vee )$, where we consider $c^i := \overline{h^i} \in \C{C}$ for $0\leq i \leq n$. Accordingly, the cyclic structure on $C_H(\C{C},M^\vee )$ gives rise to a cyclic structure on 
\[
C(H,K,M^\vee ) := \bigoplus_{n\geq0}\,C_n(H,K,M^\vee )
\]
consisting of the faces
\begin{align*}
& \d_i:C_n(H,K,M^\vee ) \to C_{n-1}(H,K,M^\vee ), \qquad 0\leq i \leq n, \\
& (\d_0 \phi)(c^1\odots c^{n-1}) :=\phi(1\ot c^1 \odots c^{n-1}), \\
& (\d_i \phi)(c^1\odots c^{n-1}) := \phi(c^1 \odots c^i\ps{1}\ot c^i\ps{2} \odots c^{n-1}), \qquad 0\leq i \leq n-1, \\
& (\d_n \phi)(c^1\odots c^{n-1}) := \a(\phi( c^1 \odots c^{n-1} \ot (\_))),
\end{align*}
the degeneracies
\begin{align*}
& \s_j:C_n(H,K,M^\vee ) \to C_{n+1}(H,K,M^\vee ), \qquad 0\leq j \leq n, \\
& (\s_j \phi)(c^1\odots c^{n+1}) := \ve(c^{j+1})\phi(c^1 \odots c^j\ot c^{j+2} \odots c^{n+1}), 
\end{align*}
and the cyclic operator\footnote[1]{Although the presentation of the cyclic operator seems different from the one in \cite[Def. 13]{ConnMosc}, they are the same when evaluated on an $m\in M$. Indeed, 
\begin{align*}
& (\tau_n \phi)(c^1\odots c^n)(m) =  \a(\phi(S(h^1\ps{3})\cdot(c^2 \odots c^n) \ot (\_)S^{-1}(h^1\ps{1})))(mh^1\ps{2}) = \\
& \a(\phi(S(h^1\ps{3})\cdot(c^2 \odots c^n) \ot (mh^1\ps{2})\ns{-1}S^{-1}(h^1\ps{1}))((mh^1\ps{2})\ns{0}) = \\
& \phi(S(h^1\ps{3})\cdot(c^2 \odots c^n) \ot S(h^1\ps{2}\ps{3})m\ns{-1}h^1\ps{2}\ps{1}S^{-1}(h^1\ps{1}))(mh^1\ps{2}\ps{2}) = \\
& \phi(S(h^1\ps{2})\cdot(c^2 \odots c^n\ot m\ns{-1}))(mh^1\ps{1}).
\end{align*}}
\begin{align*}
& \tau_n:C_n(H,K,M^\vee ) \to C_n(H,K,M^\vee ),  \\
& (\tau_n \phi)(c^1\odots c^n) := h^1\ps{2}\rt \a(\phi(S(h^1\ps{3})\cdot(c^2 \odots c^n) \ot (\_)S^{-1}(h^1\ps{1}))),
\end{align*}
see also \cite[Def. 13]{ConnMosc}. The cyclic (resp. periodic cyclic) homology of this cyclic module is denoted by $HC_\ast(H,K,M^\vee )$ (resp. $HP_\ast(H,K,M^\vee )$), and it is called the \emph{relative (periodic) Hopf-cyclic homology of the Hopf algebra $H$, relative to $K \subseteq H$, with coefficients in $M^\vee $}.

\begin{remark}
As we have noted in Remark \ref{remark-mod-alg-contra-coeff}, let us record here that the Hopf-cyclic homology of a module coalgebra, with SAYD contra-module coefficients, is obtained by dualizing the Hopf-cyclic complex computing the Hopf-cyclic cohomology of the same module coalgebra, with SAYD module coefficients.
\end{remark}

\subsection{Hopf-cyclic cohomology (with contramodule coefficients)  of comodule algebras}~\label{subsect-contra-comod-alg}

We shall now apply the strategy of Subsection \ref{subsect-module-alg-contra-coeff} and Subsection \ref{subsect-module-coalg-contra-coeff} to obtain the Hopf-cyclic cohomology, with contra-module coefficients, of comodule algebras. More precisely, we shall dualise the complex computing the Hopf-cyclic homology (with coefficients in a SAYD module) of a comodule algebra, to obtain the Hopf-cyclic cohomology (with coefficients in a SAYD contra-module) of a comodule algebra.

Let $H$ be a Hopf algebra, $A$ a right $H$-comodule algebra (with the notation used above), and let $M$ be a left/left SAYD module over $H$. Let also $M^\vee :=\Hom_k(M,k)$, which is a right/right SAYD contra-module over $H$. We shall, accordingly, consider the complex
\begin{align*}
& C^H(A,M^\vee ) := \bigoplus_{n\geq 0} C^{n,H}(A,M^\vee ), \\ 
& C^{n,H}(A,M^\vee ) := \Hom(A^{\ot\,n+1}\,\square_H\,M,k) \cong \Hom(A^{\ot\,n+1}, k)\ot_{H^\circ}M^\vee .
\end{align*}
The duality
\[
\langle\,,\rangle: C^{n,H}(A,M^\vee )\ot C_n^H(A,M)\to k,\qquad \langle \phi\ot_{H^\circ}f,a_0\odots a_n\ot m\rangle := \phi(a_0\odots a_n)f(m)
\]
then, uses the cyclic structure on $C^H(A,M)$ in order to induce a cocyclic structure on $C^H(A,M^\vee )$ through the cofaces
\begin{align*}
& \G{d}_i:C^{n-1,H}(A,M^\vee ) \to C^{n,H}(A,M^\vee ), \qquad 0\leq i \leq n, \\
& (\G{d}_i (\phi\ot_{H^\circ}f))(a_0\odots a_n) := \phi(a_0 \odots a_ia_{i+1} \odots a_n) f, \qquad 0\leq i \leq n-1, \\
& (\G{d}_n (\phi\ot_{H^\circ}f))(a_0\odots a_n) := \phi(a_n^{\ps{0}}a_0 \ot a_1 \odots a_{n-1})(f\lt a_n^{\ps{1}}),
\end{align*}
the codegeneracies
\begin{align*}
& \G{s}_j:C^{n+1,H}(A,M^\vee ) \to C^{n,H}(A,M^\vee ), \qquad 0\leq j \leq n, \\
& (\G{s}_j (\phi\ot_{H^\circ}f))(a_0\odots a_n) := \phi( a_0 \odots a_j\ot 1 \ot a_{j+1} \odots a_n) f, 
\end{align*}
and the cyclic operator
\begin{align*}
& \G{t}_n:C^{n,H}(A,M^\vee ) \to C^{n,H}(A,M^\vee ),  \\
& (\G{t}_n (\phi\ot_{H^\circ}f))(a_0\odots a_n) := \phi(a_n^{\ps{0}}\ot a_0 \ot a_1 \odots a_{n-1})(f\lt a_n^{\ps{1}}).
\end{align*}
The cyclic (resp. periodic cyclic) homology of this cocyclic module is denoted by $HC^{\ast,H}(A,M^\vee )$ (resp. $HP^{\ast,H}(A,M^\vee )$), and it is called the \emph{(periodic) Hopf-cyclic cohomology of the $H$-comodule algebra $A$, with coefficients in $M^\vee $}.

\section{Cyclic (co)homologies of Lie algebras}\label{sect-cyclic-cohom-Lie-alg}

The main results of the present manuscript, that is the quantum van Est isomorphisms between (relative) Hopf-cyclic (co)homologies of quantized enveloping algebras and Hopf-cyclic (co)homologies of quantized function algebras, follow (in the $E_1$-levels of relevant spectral sequences) from isomorphisms between (relative) cyclic (co)homologies of Lie algebras and (by a slight abuse of language) cyclic (co)homologies of Lie groups.

We shall, accordingly, recall now the (relative) cyclic homology and cyclic cohomology of Lie algebras, with coefficients in (unimodular) SAYD modules. We, on the other hand, continue to adopt the convention to work over a ground field $k$ of characteristic 0.

\subsection{Cyclic (co)homological coefficient spaces over Lie algebras}\label{subsect-unimodular-stable-AYD-over-Lie}~

Along the way to cyclic (co)homology of Lie algebras, we shall first recall the appropriate coefficient spaces.

Following \cite{RangSutl-II} let $\G{g}$ be a Lie algebra, and let $M$ be a \emph{right/left SAYD module over $\G{g}$}, that is,
\begin{itemize}
\item[(i)] $M$ is a \emph{right $\G{g}$-module}, in other words,
\[
m[X_1,X_2] = (mX_1)X_2 - (mX_2)X_1
\]
for any $m\in M$, and any $X_1,X_2 \in \G{g}$,
\item[(ii)] $M$ is a \emph{left $\G{g}$-comodule}, or equivalently, there is $\nb:M\to \G{g}\ot M$, $\nb(m):= m\nsb{-1}\ot m\nsb{0}$, so that
\[
m\nsb{-2}\wedge m\nsb{-1} \ot m\nsb{0} = 0,
\]
for any $m\in M$, where $m\nsb{-2}\ot m\nsb{-1} \ot m\nsb{0} := m\nsb{-1}\ot m\nsb{0}\nsb{-1}\ot m\nsb{0}\nsb{0}$,
\item[(iii)] $M$ is a \emph{right/left AYD module over $\G{g}$}, in other words,
\[
\nb(mX) = m\nsb{-1}\ot m\nsb{0}X + [m\nsb{-1},X]\ot m\nsb{0}
\]
for any $m\in M$, and any $X\in \G{g}$, and finally
\item[(iv)] $M$ is \emph{stable}, that is,
\[
m\nsb{0}m\nsb{-1} = 0,
\]
for any $m\in M$.
\end{itemize}

Let us note also that $M$ is stable if and only if
\[
(m\t^i)\xi_i = 0,
\]
where $\{\xi_i\mid 1\leq i\leq \dim(\G{g})\}$, $\{\t^i\mid 1\leq i\leq \dim(\G{g})\}$ is a dual pair of basis, for $\G{g}$ and $\G{g}^\vee$ respectively, and we consider the right $S(\G{g}^\vee)$-action as
\[
m\t := \t(m\nsb{-1})m\nsb{0}
\]
for any $m\in M$, and any $\t\in S(\G{g}^\vee)$. In this language, a right $\G{g}$-module left $\G{g}$-comodule $M$ is said to be \emph{unimodular stable} if
\[
(m\xi_i)\t^i = 0.
\]

\subsection{Cyclic homology of Lie algebras}\label{subsect-cyclic-hom-Lie}~

In the present subsection we shall recall, from \cite{RangSutl-II}, the cyclic homology of a Lie algebra, and its relation with the Hopf-cyclic cohomology of the universal enveloping algebra of this Lie algebra.

To begin with, let $M$ be a right/left stable AYD module over a Lie algebra $\G{g}$. Then, 
\[
C_{r,s}(\G{g},M):=\begin{cases}
M\ot \wedge^{s-r}\G{g} & \text{ if } 0\leq r\leq s, \\
0 & \text{ otherwise},
\end{cases}
\]
is a bicomplex with the differentials 
\begin{align*}
& \p_{CE}:C_{r,s}(\G{g},M) \to C_{r+1,s}(\G{g},M), \\
& \p(m\ot X_1\wedge \ldots \wedge X_n) := \\
& \sum_{1\leq i<j\leq n}\,(-1)^{i+j-1}\,m\ot [X_i,X_j]\wedge X_1\wedge\ldots \wedge \widehat{X}_i \wedge\ldots \wedge \widehat{X}_j \wedge\ldots \wedge X_n + \\
&\hspace{2cm} \sum_{1\leq i\leq n}\,(-1)^{i+1}\, mX_i \ot X_1 \wedge\ldots \wedge \widehat{X}_i \wedge\ldots \wedge X_n,
\end{align*}
and
\begin{align*}
& \p_K:C_{r,s}(\G{g},M) \to C_{r,s+1}(\G{g},M), \\
& \p_K(m\ot X_1 \wedge\ldots \wedge X_n) := m\t^i\ot \xi_i\wedge X_1 \wedge\ldots \wedge X_n.
\end{align*}
The (total) homology of this bicomplex is denoted by $HC^\ast(\G{g},M)$, and it is called the \emph{cyclic homology of $\G{g}$, with coefficients in $M$}.

Similarly, the (total) homology of the bicomplex
\[
C_{r,s}(\G{g},M):=\begin{cases}
M\ot \wedge^{s-r}\G{g} & \text{ if }  r\leq s, \\
0 & \text{ otherwise},
\end{cases}
\]
is denoted by $HP^\ast(\G{g},M)$, and it is called the \emph{periodic cyclic homology of $\G{g}$, with coefficients in $M$}.

Two more remarks are in order.

If an SAYD module $M$ over $\G{g}$ is \emph{locally conilpotent}, that is, for any $m\in M$ there is $p\in \B{N}$ so that $\nb^p(m) = 0$, then $M$ exponentiates to an SAYD module over $U(\G{g})$,  \cite[Prop. 5.10]{RangSutl-II}. Furthermore, in this case,
\[
HC^\ast(U(\G{g}),M) \cong HC^\ast(\G{g},M).
\]
Let us note also that the cyclic homology theory for Lie algebras may be relativized. More precisely, given a Lie subalgebra $\G{h}\subseteq \G{g}$, the relative cyclic (resp. periodic cyclic) homology $HC^\ast(\G{g},\G{h},M)$ (resp. $HP^\ast(\G{g},\G{h},M)$) is defined to be the homology of the bicomplex
\[
C_{r,s}(\G{g},\G{h},M):=\begin{cases}
M\ot \wedge^{s-r}(\G{g}/\G{h}) & \text{ if } 0\leq r\leq s, \\
0 & \text{ otherwise},
\end{cases}
\]
(resp.
\[
C_{r,s}(\G{g},\G{h},M):=\begin{cases}
M\ot \wedge^{s-r}(\G{g}/\G{h}) & \text{ if } r\leq s, \\
0 & \text{ otherwise}.)
\end{cases}
\]
If, in addition, $M$ is locally conilpotent, then \cite[Thm. 6.2]{RangSutl-II} yields at once that
\begin{equation}\label{Lie-alg-cyclic-hom}
HC^\ast(U(\G{g}),U(\G{h}),M) \cong HC^\ast(\G{g},\G{h},M).
\end{equation}

\subsection{Cyclic cohomology of Lie algebras}\label{subsect-cyclic-cohom-Lie}~

We shall now develop the Lie algebra cyclic cohomology analogue of \eqref{Lie-alg-cyclic-hom} above, which was not treated earlier in \cite{RangSutl-II}.

Let now $V$ be a right/left unimodular stable AYD module over $\G{g}$. Then, along the lines of \cite{RangSutl-II},
\[
W^{r,s}(\G{g},V):=\begin{cases}
V\ot \wedge^{s-r}\G{g}^\vee & \text{ if } 0\leq r\leq s, \\
0 & \text{ otherwise},
\end{cases}
\]
is a bicomplex with the differentials 
\begin{align*}
& d_{CE}:W^{r,s}(\G{g},V) \to W^{r-1,s}(\G{g},V), \\
& d_{CE}\vp(X_1\wedge \ldots \wedge X_{n+1}) := \\
& \sum_{1\leq i<j\leq n}\,(-1)^{i+j-1}\,\vp([X_i,X_j]\wedge X_1\wedge\ldots \wedge \widehat{X}_i \wedge\ldots \wedge \widehat{X}_j \wedge\ldots \wedge X_{n+1}) + \\
&\hspace{2cm} \sum_{1\leq i\leq n}\,(-1)^{i+1}\, \vp(X_1 \wedge\ldots \wedge \widehat{X}_i \wedge\ldots \wedge X_{n+1}) X_i,
\end{align*}
and
\begin{align*}
& d_K:W^{r,s}(\G{g},V) \to W^{r,s-1}(\G{g},V), \\
& d_K\vp(X_1 \wedge\ldots \wedge X_{n-1}) := \vp(\xi_i\wedge X_1 \wedge\ldots \wedge X_{n-1})\t^i.
\end{align*}
The (total) homology of this bicomplex is denoted by $HC_\ast(\G{g},V)$, and it is called the \emph{cyclic cohomology of $\G{g}$, with coefficients in $V$}.

Similarly, the (total) homology of the bicomplex
\[
W^{m,n}(\G{g},V):=\begin{cases}
V\ot \wedge^{n-m}\G{g}^\vee & \text{ if }  m\leq n, \\
0 & \text{ otherwise},
\end{cases}
\]
is denoted by $HP_\ast(\G{g},V)$, and is called the \emph{periodic cyclic cohomology of $\G{g}$, with coefficients in $V$}.

Just as the cyclic homology of Lie algebras, cyclic cohomology theory for Lie algebras may be relativized as well. Given a Lie subalgebra $\G{h}\subseteq \G{g}$, the relative cyclic (resp. periodic cyclic) cohomology $HC_\ast(\G{g},\G{h},V)$ (resp. $HP_\ast(\G{g},\G{h},V)$) is defined to be the homology of the bicomplex
\[
W^{m,n}(\G{g},\G{h},V):=\begin{cases}
V\ot \wedge^{n-m}(\G{g}/\G{h})^\vee & \text{ if } 0\leq m\leq n, \\
0 & \text{ otherwise},
\end{cases}
\]
(resp.
\[
W^{m,n}(\G{g},\G{h},V):=\begin{cases}
V\ot \wedge^{n-m}(\G{g}/\G{h})^\vee & \text{ if } m\leq n, \\
0 & \text{ otherwise}.)
\end{cases}
\]
Along the way to prove an analogue of \eqref{Lie-alg-cyclic-hom} for Lie algebra cyclic cohomology, we shall first record the following auxiliary result on the coefficient spaces.

\begin{proposition}\label{prop-stable-to-unimodular-stable}
If $M$ is a right/left stable AYD module over $\G{g}$, then $M^\vee$ is a right/left unimodular stable AYD module over $\G{g}$.
\end{proposition}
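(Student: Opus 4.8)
The plan is to equip $M^\vee = \Hom(M,k)$ with the contragredient right $\G{g}$-module structure and the transpose left $\G{g}$-comodule structure, and then to verify each of the four defining axioms (right module, left comodule, AYD compatibility, and unimodular stability) by dualizing the corresponding property of $M$. Concretely, fixing a dual pair of bases $\{\xi_i\}$ and $\{\t^i\}$ of $\G{g}$ and $\G{g}^\ast$, I would set
\[
(fX)(m) := -f(mX), \qquad \nb^\vee(f) := \sum_i \xi_i \ot (f\t^i), \quad (f\t^i)(m) := f(m\t^i),
\]
for $f \in M^\vee$, $m \in M$, $X \in \G{g}$. Here the finite-dimensionality of $\G{g}$ guarantees that $\nb^\vee(f)$ is a finite sum, hence a genuine element of $\G{g}\ot M^\vee$, and a short check shows that the induced right $S(\G{g}^\ast)$-action $f\t := \t(f\nsb{-1})f\nsb{0}$ is consistent with the prescription $(f\t^i)(m) = f(m\t^i)$.

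First I would confirm the module axiom: the sign in $(fX)(m) = -f(mX)$ is exactly what is needed to turn the right $\G{g}$-module $M$ into a \emph{right} (rather than left) $\G{g}$-module on $M^\vee$, and this is a one-line pairing computation against $m[X,Y] = (mX)Y - (mY)X$. For the comodule axiom, evaluating $f\nsb{-2}\wedge f\nsb{-1}\ot f\nsb{0} = \sum_{i,j}\xi_i\wedge \xi_j \ot (f\t^i)\t^j$ on $m$ reduces, after antisymmetrization in $i,j$, to the identity $(f\t^i)\t^j = (f\t^j)\t^i$; but $((f\t^i)\t^j)(m) = f((m\t^j)\t^i)$, so this is precisely the transpose of the comodule condition $(m\t^i)\t^j = (m\t^j)\t^i$ already satisfied by $M$.

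The heart of the argument is the AYD compatibility. Writing both sides of $\nb^\vee(fX) = f\nsb{-1}\ot f\nsb{0}X + [f\nsb{-1},X]\ot f\nsb{0}$ in the basis $\{\xi_j\}$ and pairing the $\xi_j$-component with $m$, the claim becomes
\[
-f((m\t^j)X) = -f((mX)\t^j) + \sum_k \t^j([\xi_k,X])\,f(m\t^k),
\]
which is obtained by applying $-f$ to the $\xi_j$-component of the AYD identity for $M$, namely $(m\t^j)X = (mX)\t^j - \sum_k \t^j([\xi_k,X])(m\t^k)$. Finally, unimodular stability of $M^\vee$ follows at once, since $((f\xi_i)\t^i)(m) = (f\xi_i)(m\t^i) = -f((m\t^i)\xi_i) = 0$ by the stability $(m\t^i)\xi_i = 0$ of $M$. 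I expect the main obstacle to be bookkeeping rather than anything conceptual: one must track carefully the sign in the contragredient action and, in the AYD step, the structure-constant term $\t^j([\xi_k,X])$ produced by the bracket, all while remembering that passing to the dual interchanges the roles of the module ($\xi_i$) and the comodule ($\t^i$) actions. This interchange is exactly why $M$ being \emph{stable} yields $M^\vee$ being \emph{unimodular} stable, and not stable: the stability relation $(m\t^i)\xi_i=0$ dualizes to $(f\xi_i)\t^i=0$.
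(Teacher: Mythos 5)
Your proof is correct, and it equips $M^\vee$ with exactly the same structure maps as the paper's proof: the contragredient action $(f\lt\xi)(m)=-f(m\xi)$, the transpose action $(f\lt\t)(m)=f(m\t)$ (your coaction $\nb^\vee(f)=\sum_i\xi_i\ot f\t^i$ is its comodule form), and your closing unimodular-stability computation $((f\lt\xi_i)\lt\t^i)(m)=-f((m\t^i)\xi_i)=0$ is word for word the one in the paper. The genuine difference is in how the compatibilities are verified. The paper invokes \cite[Prop.~5.13]{RangSutl-II}, which identifies right/left stable AYD modules over $\G{g}$ with stable right modules over the semidirect sum Lie algebra $\G{g}^\ast\rtimes\G{g}$; thanks to that equivalence it only needs to check the single mixed relation $(f\lt\xi)\lt\t-(f\lt\t)\lt\xi=f\lt(\xi\rt\t)$, the module, comodule and AYD axioms being packaged into the semidirect-product picture. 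You instead verify the four axioms directly, and your component form of the AYD identity,
\[
-f((m\t^j)X) = -f((mX)\t^j) + \sum_k \t^j([\xi_k,X])\,f(m\t^k),
\]
is precisely the paper's mixed relation unpacked in a basis: the term $\sum_k\t^j([\xi_k,X])\,m\t^k$ is exactly $m(X\rt\t^j)$ for the coadjoint action of $\G{g}$ on $\G{g}^\ast$. So your route is self-contained (it does not rely on the cited equivalence) at the cost of more bookkeeping, while the paper's is shorter but leans on the external lemma; both rest on the same observation, which you articulate clearly, that dualization interchanges the roles of the $\G{g}$-action and the $S(\G{g}^\ast)$-action, turning stability of $M$ into unimodular stability of $M^\vee$.
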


\begin{proof}
We recall from \cite[Prop. 5.13]{RangSutl-II} that $M$ is a right/left stable AYD module over $\G{g}$ if and only if it is a stable right module over the (semi-direct sum) Lie algebra $\G{g}^\vee\rtimes \G{g}$, that is,
\[
(m \xi)\t - (m\t)\xi = m(\xi\rt \t)
\]
for any $m\in M$, any $\xi \in \G{g}$, and any $\t \in \G{g}^\vee$. Accordingly, $M$ happens to be a right module over the semi-direct product algebra $U(\G{g}^\vee\rtimes \G{g}) \cong S(\G{g}^\vee)\rtimes U(\G{g})$.

On the other hand, $M$ being a right module over $\G{g}$, and over $S(\G{g}^\vee)$, $M^\vee $ bears a natural right $\G{g}$-module structure
\[
(f\lt \xi)(m) := -f(m\xi),
\]
and a natural right $S(\G{g}^\vee)$-module structure by
\[
(f\lt \t)(m) := f(m\t)
\]
for any $f\in M^\vee $, any $m\in M$, any $\xi\in \G{g}$, and any $\t\in\G{g}^\vee$. Then,
\begin{align*}
& ((f \lt\xi)\lt\t - (f\lt \t)\lt\xi)(m) = f(-(m\t)\xi + (m\xi)\t ) = \\
& f(m(\xi\rt \t)) = (f \lt (\xi\rt \t))(m)
\end{align*}
that is, $M^\vee $ is a right $\G{g}^\vee\rtimes \G{g}$-module. Furthermore, $M^\vee $ is unimodular stable. Indeed, for a dual pair of bases $\{\xi_i\mid 1\leq i\leq \dim(\G{g})\}$ and $\{\t^i\mid 1\leq i\leq \dim(\G{g})\}$, 
\[
((f\lt \xi_i)\lt \t^i)(m) = -f((m\t^i)\xi_i) = 0,
\]
for any $f\in M^\vee $, any $m\in M$, any $\xi\in \G{g}$, and any $\t\in\G{g}^\vee$.
\end{proof}

The following is the main result of this subsection.

\begin{proposition}\label{prop-U(g)-contra-module-coeff}
Let $M$ be a locally conilpotent stable AYD module over a Lie algebra $\G{g}$, let $M^\vee$ be the corresponding AYD contramodule, and let also $\G{h}\subseteq \G{g}$ be a Lie subalgebra. Then
\[
HC_\ast(U(\G{g}),U(\G{h}),M^\vee ) \cong HC_\ast(\G{g},\G{h},M^\vee ).
\]
\end{proposition}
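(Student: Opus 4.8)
The plan is to assemble the desired isomorphism by chaining together the duality constructions recorded in the previous sections with the classical Connes–Moscovici-type comparison for relative Hopf-cyclic homology of enveloping algebras. Concretely, the statement
\[
HC_\ast(U(\G{g}),U(\G{h}),M^\vee ) \cong HC^\ast(\G{g},\G{h},M^\vee )
\]
should be proved by identifying each side as the (co)homology of a complex dual to one already understood, and then matching the two dual complexes.

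First I would start on the left-hand side. By Proposition \ref{prop-stable-to-unimodular-stable}, since $M$ is a right/left stable AYD module over $\G{g}$, its dual $M^\vee$ is a right/left unimodular stable AYD module, so the right-hand side $HC^\ast(\G{g},\G{h},M^\vee )$ is indeed defined via the bicomplex $W^{m,n}(\G{g},\G{h},M^\vee )$ of Subsection \ref{subsect-cyclic-cohom-Lie}. Next, I would invoke the relative comparison $HC_\ast(U(\G{g}),U(\G{h}),M) \cong HC_\ast(\G{g},\G{h},M)$ from \cite[Thm. 6.2]{RangSutl-II}, which applies because $M$ is locally conilpotent and hence exponentiates to a SAYD module over $U(\G{g})$. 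This reduces the problem to showing that dualizing the relative Lie-algebra cyclic \emph{homology} bicomplex $C_{r,s}(\G{g},\G{h},M)$ produces precisely the relative cyclic \emph{cohomology} bicomplex $W^{m,n}(\G{g},\G{h},M^\vee )$.

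The core computation is therefore this linear-dual matching. Since $C_{r,s}(\G{g},\G{h},M) = M\ot \wedge^{s-r}(\G{g}/\G{h})$ and $W^{m,n}(\G{g},\G{h},M^\vee ) = M^\vee \ot \wedge^{n-m}(\G{g}/\G{h})^\ast$, there is a natural pairing $W^{m,n}(\G{g},\G{h},M^\vee )\ot C_{r,s}(\G{g},\G{h},M)\to k$ when the gradings match, coming from evaluating $M^\vee$ against $M$ and pairing $\wedge^\bullet(\G{g}/\G{h})^\ast$ against $\wedge^\bullet(\G{g}/\G{h})$. I would check that under this pairing the Chevalley–Eilenberg differential $\p_{CE}$ is adjoint to $d_{CE}$ and the Koszul differential $\p_K$ is adjoint to $d_K$; the sign conventions and the passage from the right $\G{g}$-action on $M$ to the action $(f\lt\xi)(m) = -f(m\xi)$ on $M^\vee$ (and the unchanged $S(\G{g}^\ast)$-action) are exactly those used in the proof of Proposition \ref{prop-stable-to-unimodular-stable}, so the adjointness of the two differentials follows from the same computation that established the unimodular stability. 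Because all the spaces involved are finite-dimensional in each bidegree (as $\G{g}/\G{h}$ is finite-dimensional), dualizing a bicomplex commutes with taking (total) homology, and so the dual of the homology bicomplex computes the cohomology of the dual bicomplex.

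I expect the main obstacle to be bookkeeping rather than conceptual: one must verify that the dualization compatible with \cite[Prop. 2.2]{Rang11} and \cite[Sect. 4]{Brze11} (through which $HC_\ast(U(\G{g}),U(\G{h}),M^\vee )$ is defined as the homology of the complex dual to $C_{n,H}(\C{C},M)$) agrees, after transport through the comparison isomorphism of \cite[Thm. 6.2]{RangSutl-II}, with the elementary linear dual of the Lie-algebra homology bicomplex. In other words, the delicate point is that the two a priori different dualizations — the Hopf-theoretic one on the enveloping-algebra side and the naive linear one on the Lie-algebra side — are intertwined by the comparison map; once this compatibility is checked, the adjointness of $(\p_{CE},\p_K)$ with $(d_{CE},d_K)$ and the finite-dimensionality of the fibers give the isomorphism immediately.
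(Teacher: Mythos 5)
Your proof is correct in substance, but it is genuinely not the paper's proof. The paper never invokes \cite[Thm.\ 6.2]{RangSutl-II} here: it filters $M$ by the increasing filtration of \cite[Lemma 6.2]{JaraStef06} (whose graded pieces are trivial $U(\G{g})$-comodules), puts the induced filtrations on both cyclic complexes, and compares the associated spectral sequences, the $E_1$-identification being supplied by \cite[Thm.\ 15]{ConnMosc} --- the Hochschild boundary on the enveloping-algebra side becoming the Koszul differential $d_K$, and the Connes coboundary becoming $d_{CE}$. You instead take the relative comparison $HC_\ast(U(\G{g}),U(\G{h}),M)\cong HC_\ast(\G{g},\G{h},M)$ as a black box and add a pure linear-duality argument: in effect you prove Corollary \ref{BlancWigner-Lie-alg-cyclic-version} directly and deduce the proposition from it, inverting the paper's logical order (the paper derives that corollary \emph{from} this proposition); this is legitimate since the black box is external to the paper. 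Three caveats. First, your chain $HC_\ast(U(\G{g}),U(\G{h}),M^\vee)\cong HC^\ast(U(\G{g}),U(\G{h}),M)^\vee\cong HC_\ast(\G{g},\G{h},M)^\vee\cong HC^\ast(\G{g},\G{h},M^\vee)$ hinges on reading the quoted $HC_\ast(U(\G{g}),U(\G{h}),M)$ as the cohomology of the cocyclic module $C^\bullet(U(\G{g}),U(\G{h}),M)$, i.e.\ precisely the complex whose degreewise dual defines the left-hand side; that is what the paper means, but you should say so explicitly. Second, your finiteness claim is imprecise: $M$ need not be finite dimensional, so neither are the bidegree pieces; what you actually need is exactness of $(-)^\vee$ over a field, finite dimensionality of $\wedge^\bullet(\G{g}/\G{h})$ (giving $(M\ot\wedge^{s-r}(\G{g}/\G{h}))^\vee\cong M^\vee\ot\wedge^{s-r}(\G{g}/\G{h})^\ast$), and the fact that each total degree of these first-quadrant bicomplexes meets only finitely many bidegrees. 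Third, since the proposition asserts only an abstract isomorphism of graded vector spaces, your ``delicate point'' about intertwining the Hopf-theoretic and naive dualizations is unnecessary --- dualizing the cited isomorphism degreewise already closes the chain; that compatibility would matter only if a specific canonical map were required to realize the isomorphism. As for what each route buys: yours is shorter and makes the duality mechanism transparent, while the paper's filtration-plus-$E_1$ argument is the template reused for Propositions \ref{prop-vanEst-classical-I} and \ref{prop-vanEst-classical-II} and Theorems \ref{vanEst-I-cohom} and \ref{vanEst-II-cohom}, where no analogue of the black-box theorem is available.
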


\begin{proof}
Let us first note that if $M$ is a locally conilpotent right/left stable AYD over $\G{g}$, then it follows from \cite[Prop. 5.10\,\&\, Lemma 5.11]{RangSutl-II} that it is a right/left SAYD module over $U(\G{g})$, and hence $M^\vee $ is a left/right SAYD contra-module over $U(\G{g})$. Thus, the homology on the left hand side is well-defined. The homology on the right hand side, on the other hand, is defined in view of Proposition \ref{prop-stable-to-unimodular-stable}.

Now, being an AYD module over $U(\G{g})$, $M$ admits an increasing filtration $(F_pM)_{p\in \B{Z}}$, so that $F_pM / F_{p-1}M$ is a trivial $U(\G{g})$-comodule for any $p\in \B{Z}$, \cite[Lemma 6.2]{JaraStef06}. Accordingly, an isomorphism on the level of the $E_1$-terms of the associated spectral sequences is given by \cite[Thm. 15]{ConnMosc}. More precisely, the Hochschild boundary map of the left hand side coincides with the Koszul boundary map on the right hand side, and the Connes coboundary map of the former coincides with the Chevalley-Eilenberg coboundary map of the latter.
\end{proof}

The pairing \eqref{pairing-coalgebra-complexes} between the complexes computing $HC_\ast(U(\G{g}),U(\G{h}),M^\vee )$ and $HC^\ast(U(\G{g}),U(\G{h}),M)$, yields at once the following analogue of \cite[Thm. 2]{BlanWign83}. 

\begin{corollary}\label{BlancWigner-Lie-alg-cyclic-version}
Given two Lie algebras $\G{h}\subseteq \G{g}$, and a SAYD module $M$,
\[
HC_\ast(\G{g},\G{h},M^\vee ) = HC^\ast(\G{g},\G{h},M)^\vee.
\]
\end{corollary}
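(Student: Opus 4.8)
The plan is to exhibit the cochain bicomplex computing $HC^\ast(\G{g},\G{h},M^\vee)$ as the $k$-linear dual of the chain bicomplex computing $HC_\ast(\G{g},\G{h},M)$, and then to invoke the exactness of $\Hom_k(-,k)$ over the field $k$. Since $\G{g}/\G{h}$ is finite dimensional, in each bidegree one has $\wedge^{s-r}(\G{g}/\G{h})^\ast = \big(\wedge^{s-r}(\G{g}/\G{h})\big)^\vee$, so that $W^{r,s}(\G{g},\G{h},M^\vee) = M^\vee\ot\wedge^{s-r}(\G{g}/\G{h})^\ast \cong \big(M\ot\wedge^{s-r}(\G{g}/\G{h})\big)^\vee = C_{r,s}(\G{g},\G{h},M)^\vee$, the natural pairing being $\langle\vp,\,m\ot\omega\rangle := \vp(\omega)(m)$. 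First I would record this identification, observing that finite dimensionality is needed only on the wedge factors, so $M$ itself may remain infinite dimensional.

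Next I would verify that, under this pairing, the two pairs of differentials are mutually adjoint. For the Koszul differentials this is immediate: since the $S(\G{g}^\ast)$-action on $M^\vee$ from Proposition \ref{prop-stable-to-unimodular-stable} is the plain transpose $(f\lt\t)(m)=f(m\t)$, a one-line computation gives $\langle d_K\vp,\,c\rangle=\langle\vp,\,\p_K c\rangle$, so that $d_K=\p_K^\vee$ on the nose. For the Chevalley--Eilenberg differentials the bracket terms are adjoint verbatim; the action terms, however, involve the contragredient $\G{g}$-action $(f\lt\xi)(m)=-f(m\xi)$, whose extra sign must be reconciled against the sign carried by the action term of $\p_{CE}$.

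Reconciling this Chevalley--Eilenberg sign is the step I expect to be the main obstacle, as it cannot be absorbed by a naive degreewise rescaling of the pairing (the bracket and action terms occupy the same bidegree shift). The cleanest route around it is to recall from Proposition \ref{prop-stable-to-unimodular-stable} that $M$, respectively $M^\vee$, is a right, respectively contragredient, module over the semidirect-sum Lie algebra $\G{g}^\ast\rtimes\G{g}$, and that, by \cite{RangSutl-II}, the two bicomplexes above are precisely the relative Chevalley--Eilenberg homology and cohomology bicomplexes of $\G{g}^\ast\rtimes\G{g}$ (with $\G{h}$ sitting in the $\G{g}$-summand) with coefficients $M$ and $M^\vee$. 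Organizing both through a single Koszul projective resolution of $k$ over $U(\G{g}^\ast\rtimes\G{g})$ and invoking hom-tensor adjunction then realizes the cochain complex literally as the $k$-dual of the chain complex, the troublesome contragredient sign being accounted for by the adjunction rather than by comparing explicit formulas. This is exactly the standard universal-coefficient duality $H^\ast(\G{p},N^\vee)\cong H_\ast(\G{p},N)^\vee$ for Lie algebras, applied to $\G{p}=\G{g}^\ast\rtimes\G{g}$.

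Finally, with the cochain bicomplex identified as the $k$-linear dual of the chain bicomplex and the differentials mutually transpose, I would pass to total complexes and use that $\Hom_k(-,k)$ is exact over the field $k$, hence commutes with taking (co)homology. This yields $HC^\ast(\G{g},\G{h},M^\vee)\cong HC_\ast(\G{g},\G{h},M)^\vee$ in every degree, and the same argument applied to the periodic bicomplexes gives the periodic statement. I would also note that the hypotheses are in force: $M$ being a SAYD module over $\G{g}$, Proposition \ref{prop-stable-to-unimodular-stable} guarantees that $M^\vee$ is unimodular stable, so the left-hand side is indeed defined.
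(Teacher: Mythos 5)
Your first two steps are right, and they are the natural start: the degreewise pairing $\langle\vp,\,m\ot\omega\rangle=\vp(\omega)(m)$ identifies $W^{r,s}(\G{g},\G{h},M^\vee)$ with $C_{r,s}(\G{g},\G{h},M)^\vee$ (only $\dim(\G{g}/\G{h})<\infty$ is needed), and $d_K=\p_K^\vee$ holds on the nose. The proposal breaks, however, exactly at the step you call the cleanest route around the Chevalley--Eilenberg sign: it is not true that $C_{\bullet,\bullet}(\G{g},\G{h},M)$ and $W^{\bullet,\bullet}(\G{g},\G{h},M^\vee)$ are the relative Chevalley--Eilenberg (co)homology bicomplexes of $\G{g}^\ast\rtimes\G{g}$. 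Proposition \ref{prop-stable-to-unimodular-stable} (i.e. \cite[Prop. 5.13]{RangSutl-II}) identifies the \emph{coefficients} -- SAYD modules over $\G{g}$ are modules over $\G{g}^\ast\rtimes\G{g}$ -- not the complexes: the Chevalley--Eilenberg complex of $\G{p}=\G{g}^\ast\rtimes\G{g}$ has chains $M\ot\wedge^n(\G{g}^\ast\oplus\G{g})$, involving exterior powers of $\G{g}^\ast$, whereas the cyclic bicomplex has only $M\ot\wedge^{s-r}(\G{g}/\G{h})$ in each bidegree, the $\G{g}^\ast$-direction entering through the Koszul operator instead. The two theories genuinely disagree: for $\G{g}$ one-dimensional abelian, $\G{h}=0$, and $M=k$ trivial, all differentials vanish and $HC_n(\G{g},k)=k$ for every $n\geq 0$, while $H_n(\G{g}^\ast\rtimes\G{g},k)=\wedge^n(k^2)$ is two-dimensional for $n=1$ and zero for $n\geq 3$. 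So no universal-coefficient theorem for $\G{g}^\ast\rtimes\G{g}$ can produce the Corollary, and your argument has a genuine gap at its central step.

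The irony is that the obstacle you were routing around is an artifact of a sign typo in the paper's formula for $\p_{CE}$: as printed (bracket sign $(-1)^{i+j-1}$, action sign $(-1)^{i+1}$) one checks on $m\ot X_1\wedge X_2$ that $\p_{CE}^2(m\ot X_1\wedge X_2)=2\,m[X_1,X_2]\neq 0$ for a right module, so that formula cannot be the intended one, while the printed $d_{CE}$ does square to zero. With the consistent choice of action sign $(-1)^{i}$ in $\p_{CE}$, the extra minus of the contragredient action $(f\lt\xi)(m)=-f(m\xi)$ is precisely what makes the action terms match, and one gets $d_{CE}=\p_{CE}^\vee$ on the nose; with the other consistent choice (flip the bracket sign instead) one gets $d_{CE}=-\p_{CE}^\vee$, and -- contrary to your claim -- a uniform sign on one of the two differentials of a bicomplex \emph{is} removable, by rescaling $W^{r,s}$ by $(-1)^r$: this flips the CE differential (which moves $r$) and fixes the Koszul differential (which moves $s$). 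After that, exactness of $\Hom_k(-,k)$ finishes your direct argument, for an arbitrary SAYD module $M$ over $\G{g}$. For comparison, the paper gives no chain-level proof at all: the Corollary is extracted by combining Proposition \ref{prop-U(g)-contra-module-coeff} with the fact, built into Subsection \ref{subsect-module-coalg-contra-coeff}, that the relative Hopf-cyclic complex with contramodule coefficients is by construction the degreewise dual of the cocyclic module with module coefficients, together with the isomorphism $HC_\ast(U(\G{g}),U(\G{h}),M)\cong HC_\ast(\G{g},\G{h},M)$ recorded at the end of Subsection \ref{subsect-cyclic-hom-Lie}; that route needs $M$ locally conilpotent, which your (repaired) direct argument would not.
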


\section{van Est isomorphism on classical Hopf algebras}\label{sect-vanEst-for-classical-Hopf}

In the present section we shall present two van Est type isomorphisms (one on the level of Hopf-cyclic homology, and one on the level of Hopf-cyclic cohomology) for the classical Hopf algebras, by which we mean the universal enveloping algebra of a Lie algebra, and the Hopf algebra of functions on a Lie or algebraic group. 

\subsection{Hopf-cyclic complexes for the Hopf algebras of functions}\label{subsect-coord-alg-funct}~

Having established the (relative) Hopf-cyclic cohomology for the universal enveloping algebra $U(\G{g})$ of a Lie algebra $\G{g}$ through Subsection \ref{subsect-mod-coalg-cohomol}, this Hopf-cyclic complex is now in need of a companion to be able to talk about an isomorphism between the two. Below we shall obtain the relevant complex by dualizing the (relative) Hopf-cyclic complex of $U(\G{g})$.

As such, the integral part of the construction is to find a dual object for $U(\G{g})$, which (inspired by the classical van Est isomorphism) is ought to be related to a Lie-type group $G$ whose Lie algebra is $\G{g}$. 

Given any formal Poisson (algebraic) group $G$, such a Hopf algebra (over a fied of characteristic zero) was introduced in \cite{Gava02}, and then in \cite[Subsect. 1.1]{CiccGava06} as $F[[G]]$ through its duality with $U(\G{g})$, and was referred as the \emph{algebra of regular functions}. In \cite[Subsect. 1.1]{Gava07} on the other hand, given any commutative Hopf algebra $H$, over any fixed field $k$ of any characteristic, its \emph{maximal spectrum} $G$ associated to it was referred as algebraic group, and the Hopf algebra $H$ itself was called the algebra of regular functions over $G$, and is denoted by $F[G]$. A connection between these two function algebras, in the case $G$ is an affine algebraic group with $\G{g}$ being its tangent Lie algebra, may be found in \cite{Gava02}. Since we shall make use only of the duality of these algebras of functions with their corresponding universal enveloping algebras, we refer the reader to \cite{CiccGava06,Gava02,Gava07,GavaRaki09} for further details.

Hopf algebras of functions over groups, admitting pairings with universal enveloping algebras were also presented in \cite[Ex. 3]{KlimSchm-book} as \emph{coordinate Hopf algebras} of simple matrix groups. More precisely, in the case $G$ is one of the groups $SL(n,\B{C})$, $SO(n,\B{C})$, or $Sp(n,\B{C})$, the coordinate Hopf algebra, denoted by $\C{O}(G)$, was presented explicitly, and the (non-degenerate) duality between $\C{O}(G)$ and $U(\G{g})$ - $\G{g}$ being the Lie algebra of the Lie group $G$ - was remarked in \cite[Ex. 6]{KlimSchm-book}.

With a slight abuse of notation we shall denote the (Hopf) algebras of functions mentioned in the above paragraphs by $\C{O}(G)$, $G$ standing for the relevant Lie or algebraic group, whose Lie algebra being $\G{g}$. The former type of function algebras will be relevant to the content of Section \ref{sect-h-adic-vanEst} below, while those of the latter type will be appropriate for Section \ref{sect-q-adic-vanEst}. In either case, though, we shall fix the ground field to be the field of complex numbers.

We shall denote the (non-degenerate) function algebra - universal enveloping algebra pairing by
\begin{equation}\label{U(g)-O(G)-duality}
\langle\,,\,\rangle: U(\G{g})\ot \C{O}(G) \to \B{C}.
\end{equation}
Finally, we let $M$ be a right/left SAYD module over $U(\G{g})$, in such a way that the $\G{g}$-action may be integrated into a $G$-action. 

Let, now, $K\subseteq G$ be a maximal compact subgroup with Lie algebra $\G{k}\subseteq \G{g}$, and $C(U(\G{g}), U(\G{k}),M)$ be the (relative) cocyclic complex recalled in Subsection \ref{subsect-mod-coalg-cohomol}, of the quotient coalgebra $\C{C}:=U(\G{g})\ot_{U(\G{k})} \B{C}$ of \eqref{C-quotient-coalg}. In view of the duality \eqref{U(g)-O(G)-duality} then the quotient coalgebra $\C{C}:=U(\G{g})\ot_{U(\G{k})} \B{C}$ dualizes into a subalgebra of $\C{O}(G)$ that we shall denote \footnote[1]{The subalgebra $\C{O}(G/K) \subseteq \C{O}(G)$ is not defined as an algebra of functions over $G/K$, though we refer the reader to \cite[Subsect. 1.6]{CiccGava06} for an inspiration.} by $\C{O}(G/K)$. Accordingly, the quotient space $M \ot_{U(\G{g})} \C{C}^{\ot\,n}$ dualizes into the subspace $(M^\vee \ot \C{O}(G/K)^{\ot\,n})^G$. We, thus, arrive at a cyclic complex 
\[
C(\C{O}(G/K),M^\vee ) = \bigoplus_{n\geq 0} C_n(\C{O}(G/K),M^\vee ), \qquad C_n(\C{O}(G/K),M^\vee ) := (M^\vee \ot\, \C{O}(G/K)^{\ot\,n+1} )^G
\]
where $M^\vee :=\Hom(M,\B{C})$ is the corresponding SAYD contramodule over $U(\G{g})$, given by the faces
\begin{align*}
& \d_i:C_n(\C{O}(G/K),M^\vee ) \to C_{n-1}(\C{O}(G/K),M^\vee ), \qquad 0\leq i \leq n, \\
& \d_i (f\ot a_0\odots a_n) := f\ot a_0\odots a_ia_{i+1} \odots a_n, \qquad 0\leq i \leq n-1, \\
& \d_n (f\ot a_0\odots a_n) := \a(f\ot S^{-1}(\_)(a_n)a_0 \ot a_1\odots a_{n-1}),
\end{align*}
the degeneracies
\begin{align*}
& \s_j:C_n(\C{O}(G/K),M^\vee ) \to C_{n+1}(\C{O}(G/K),M^\vee ), \qquad 0\leq j \leq n, \\
& \s_j (f\ot a_0\odots a_n) := f\ot a_0\odots a_j \ot 1 \ot a_{j+1} \odots a_n, 
\end{align*}
and the cyclic operator
\begin{align*}
& \tau_n:C_n(\C{O}(G/K),M^\vee ) \to C_n(\C{O}(G/K),M^\vee ),  \\
& \tau_n (f\ot a_0\odots a_n) := \a(f\ot  S^{-1}(\_)(a_n)\ot a_0\odots a_{n-1}).
\end{align*}

The complex $C(\C{O}(G/K),M^\vee )$ defined is the Hopf-cyclic complex of the $O(G)$-comodule algebra $\C{O}(G/K)$, with coefficients in the left/right SAYD module $M^\vee$ over $\C{O}(G)$, in disguise. 

In an attempt to shed light to the relevance, we begin with the following identification.

\begin{proposition}
For any $n\geq 0$,
\[(M^\vee \ot \C{O}(G/K)^{\ot\,n+1} )^G = M^\vee \,\square_{\C{O}(G)} \,\C{O}(G/K)^{\ot\,n+1}.\]
\end{proposition}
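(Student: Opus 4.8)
The plan is to prove the asserted equality of two subspaces of $M^\vee\ot\C{O}(G/K)^{\ot\,n+1}$ by matching the defining relation of the cotensor product with the defining relation of the $G$-invariants, the bridge being the elementary fact that a regular function on $G$ is detected by its values at the points of $G$. First I would record both defining relations explicitly. On the one hand, $M^\vee\ot\C{O}(G/K)^{\ot\,n+1}$ carries the diagonal (left) $G$-action
\[
x\cdot(f\ot a_0\odots a_n) := (x\rt f)\ot (a_0\lt x^{-1})\odots(a_n\lt x^{-1}),
\]
built from $(x\rt f)(m)=f(m\cdot x)$ on $M^\vee$ and $(a\lt x)(yK)=a(xyK)$ on $\C{O}(G/K)$. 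Thus $\om=\sum f\ot a_0\odots a_n$ is $G$-invariant iff $\sum (x\rt f)\ot(a_0\lt x^{-1})\odots(a_n\lt x^{-1})=\sum f\ot a_0\odots a_n$ for all $x$; applying the invertible operator $\lt x$ in each $\C{O}(G/K)$-slot, this is equivalent to
\[
\sum (x\rt f)\ot a_0\odots a_n = \sum f\ot (a_0\lt x)\odots (a_n\lt x) \qquad \text{for all } x\in G.
\]
On the other hand, since $\C{O}(G/K)$ is a left $\C{O}(G)$-comodule algebra, $\C{O}(G/K)^{\ot\,n+1}$ carries the codiagonal left coaction $a_0\odots a_n\mapsto a_0^{\ps{-1}}\cdots a_n^{\ps{-1}}\ot a_0^{\ps{0}}\odots a_n^{\ps{0}}$, so that by definition $\om\in M^\vee\,\square_{\C{O}(G)}\,\C{O}(G/K)^{\ot\,n+1}$ means exactly the single identity
\[
\sum f^{\ns{0}}\ot f^{\ns{1}}\ot a_0\odots a_n = \sum f\ot a_0^{\ps{-1}}\cdots a_n^{\ps{-1}}\ot a_0^{\ps{0}}\odots a_n^{\ps{0}}
\]
in $M^\vee\ot\C{O}(G)\ot\C{O}(G/K)^{\ot\,n+1}$.

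The second step is to connect these descriptions by evaluating the cotensor identity at a point. Applying $\Id\ot\ev_x\ot\Id$ to the displayed cotensor identity (the three factors being $M^\vee$, $\C{O}(G)$, and $\C{O}(G/K)^{\ot\,n+1}$), and using $f^{\ns{0}}f^{\ns{1}}(x)=x\rt f$ together with $a_i^{\ps{-1}}(x)a_i^{\ps{0}}=a_i\lt x$, turns it for each fixed $x\in G$ into precisely the $G$-invariance identity above. This already yields the easy inclusion $M^\vee\,\square_{\C{O}(G)}\,\C{O}(G/K)^{\ot\,n+1}\subseteq (M^\vee\ot\C{O}(G/K)^{\ot\,n+1})^G$.

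For the reverse inclusion — the main point — I must upgrade the family of pointwise identities (one per $x\in G$) back to the single tensor identity. Writing $\eta$ for the difference of the two sides of the cotensor identity, what is needed is the separation statement: \emph{an element $\eta\in V\ot\C{O}(G)\ot W$ vanishes as soon as $(\Id_V\ot\ev_x\ot\Id_W)(\eta)=0$ for all $x\in G$}, applied with $V=M^\vee$ and $W=\C{O}(G/K)^{\ot\,n+1}$. After expressing $\eta$ through finitely many linearly independent elements of $V$ and of $W$ (possible as tensors are finite sums), this reduces to the classical fact that a regular function on $G$ vanishing at every point of $G$ is the zero function, which is where the standing hypotheses on $G$ enter (reducedness of $\C{O}(G)$, equivalently Zariski density of the points, as holds for an affine algebraic group in characteristic zero). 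By the computation of the previous paragraph, $(\Id\ot\ev_x\ot\Id)(\eta)=0$ is exactly the $G$-invariance identity at $x$; hence for $G$-invariant $\om$ the element $\eta$ evaluates to zero at every point, so $\eta=0$ and $\om$ lies in the cotensor product. The only delicate ingredient is thus this point-separation property of $\C{O}(G)$, everything else being a direct matching of the two defining relations.
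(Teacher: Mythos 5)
Your proposal is correct and follows essentially the same route as the paper: both match the $G$-invariance identity against the cotensor relation by evaluating the $\C{O}(G)$-leg at points of $G$, using $f^{\ns{0}}f^{\ns{1}}(x)=x\rt f$ and $a^{\ps{-1}}(x)a^{\ps{0}}=a\lt x$. The only difference is that the paper passes from the family of pointwise identities to the single tensor identity with a bare ``hence,'' whereas you explicitly isolate and justify the separation lemma (a regular function vanishing at all points of $G$ is zero, applied after expanding in linearly independent families) that this step silently relies on --- a worthwhile clarification, not a departure.
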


\begin{proof}
We simply note that $f\ot a_0\odots a_n \in (M^\vee \ot \C{O}(G/K)^{\ot\,n+1} )^G$ if and only if 
\[
(f\ot a_0\odots a_n)\lt x = x^{-1}\rt f\ot (a_0\odots a_n)\lt x = f\ot a_0\odots a_n,
\]
or equivalently,
\[
f\ot (a_0\odots a_n)\lt x = (x\rt f)\ot a_0\odots a_n,
\]
for any $x\in G$. Accordingly, 
\[
f\ot (a_0^{\ps{-1}}\ldots a_n^{\ps{-1}})(x)(a_0^{\ps{0}}\odots a_n^{\ps{0}}) = f^{\ns{0}} f^{\ns{1}}(x) \ot a_0\odots a_n,
\]
for any $x\in G$, and hence
\[
f\ot a_0^{\ps{-1}}\ldots a_n^{\ps{-1}}\ot (a_0^{\ps{0}}\odots a_n^{\ps{0}}) = f^{\ns{0}} \ot f^{\ns{1}} \ot a_0\odots a_n,
\]
namely $f\ot a_0\odots a_n \in M^\vee \,\square_{\C{O}(G)} \,\C{O}(G/K)^{\ot\,n+1}$.
\end{proof}

As for the coefficient space, let $M^\vee$ be the left/right SAYD contra-module over $U(\G{g})$. Then, along the lines of \cite[Chpt. 4]{Milne-book}, the left $G$-action on $M^\vee $, given by
\[
G\times M^\vee \to M^\vee ,\qquad (x\rt f)(m) := f(m\cdot x)
\]
for any $m\in M$, any $x\in G$, and any $f\in M^\vee $, gives rise to a right $\C{O}(G)$-comodule structure 
\begin{equation}\label{O(G)-coaction-on-CM}
\nb:M^\vee \to M^\vee \ot \C{O}(G), \qquad f\mapsto f^{\ns{0}}\ot f^{\ns{1}},
\end{equation}
on $M^\vee $ through
\[
f^{\ns{0}} f^{\ns{1}}(x) = x\rt f
\]
for any $x\in G$, and any $f\in M^\vee $. Similarly, in view of the duality \eqref{U(g)-O(G)-duality}, the quotient coalgebra $\C{C}=U(\G{g})\ot_{U(\G{k})} \B{C}$ being a left $U(\G{g})$-module coalgebra, the subalgebra $\C{O}(G/K)\subseteq \C{O}(G)$ is a right $U(\G{g})$-module algebra (see for instance \cite[Prop. 1.6.19]{Majid-book}), and hence a left $O(G)$-comodule algebra (see for instance \cite[Prop. 1.6.11]{Majid-book}) which we shall denote by
\begin{equation}\label{O(G)-coaction-on-O(G/K)}
\Db:\C{O}(G/K)\to \C{O}(G)\ot \C{O}(G/K), \qquad a\mapsto a^{\ps{-1}}\ot a^{\ps{0}}.
\end{equation}

On the other hand, it follows from the duality \eqref{U(g)-O(G)-duality} that the left $U(\G{g})$-coaction on $M$ gives rise to a right $\C{O}(G)$-action via
\begin{equation}\label{right-OG-action}
m\lt \a = \langle m\ns{-1}, \a\rangle m\ns{0},
\end{equation}
for any $m\in M$, and any $\a\in\C{O}(G)$, and hence a left $\C{O}(G)$-action
\begin{equation}\label{O(G)-action-on-CM}
\rt:\C{O}(G)\ot M^\vee \to M^\vee , \qquad (\a\rt f)(m) := f(m\lt \a)
\end{equation}
on $M^\vee $.

\begin{proposition}\label{prop-SAYD-dual}
Let $M$ be a right/left SAYD module over $U(\G{g})$. The action \eqref{O(G)-action-on-CM}, and the coaction \eqref{O(G)-coaction-on-CM} endows $M^\vee $ with the structure of a left/right SAYD module over $\C{O}(G)$.
\end{proposition}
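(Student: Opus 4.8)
The plan is to verify, in turn, the four defining properties of a left/right SAYD module over $\C{O}(G)$: the left module axiom, the right comodule axiom, the anti-Yetter--Drinfeld (AYD) compatibility, and stability. The module and comodule axioms are formal. For the module axiom I would deduce it from the fact that the formula $m\lt\alpha=\langle m\ns{-1},\alpha\rangle\,m\ns{0}$ makes $M$ a right $\C{O}(G)$-module: since $\langle\,,\,\rangle$ is a Hopf pairing and the coaction is coassociative,
\[
(m\lt\alpha)\lt\beta = \langle m\ns{-2},\alpha\rangle\langle m\ns{-1},\beta\rangle\,m\ns{0} = \langle m\ns{-1},\alpha\beta\rangle\,m\ns{0} = m\lt(\alpha\beta),
\]
and $m\lt 1=\ve(m\ns{-1})m\ns{0}=m$; dualising through \eqref{O(G)-action-on-CM} then gives $\alpha\rt(\beta\rt f)=(\alpha\beta)\rt f$. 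For the comodule axiom I would evaluate $\nb$ at group elements: by \eqref{O(G)-coaction-on-CM} both iterates of $\nb$ send $(m,x,y)\in M\times G\times G$ to $f(m\cdot(xy))$, using that $x\rt(-)$ is a left $G$-action on $M^\vee$ and that $\alpha\ps{1}(x)\alpha\ps{2}(y)=\alpha(xy)$ in $\C{O}(G)$, so that $\nb$ is a coassociative, counital right coaction.

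The heart of the proof is the AYD compatibility, and the key step is to \emph{integrate} the AYD condition satisfied by $M$ over $U(\G{g})$. Writing the integrated action as $m\cdot x$, the infinitesimal AYD axiom for $M$ exponentiates to the $\Ad$-equivariance of the $U(\G{g})$-coaction,
\[
(m\cdot x)\ns{-1}\ot(m\cdot x)\ns{0} = \Ad_{x^{-1}}(m\ns{-1})\ot m\ns{0}\cdot x,
\]
for any $x\in G$; one checks this by differentiating at $x=\exp(tX)$, recovering $\nb(mX)=[m\ns{-1},X]\ot m\ns{0}+m\ns{-1}\ot m\ns{0}X$. I would pair this with the duality between the adjoint action on $U(\G{g})$ and conjugation on $\C{O}(G)$, namely
\[
\langle\Ad_{x^{-1}}(u),\alpha\rangle=\langle u,\alpha\ps{2}\rangle\,(S\alpha\ps{1})(x)\,\alpha\ps{3}(x),
\]
which follows from $\alpha(x^{-1}gx)=\alpha\ps{1}(x^{-1})\alpha\ps{2}(g)\alpha\ps{3}(x)$ together with $(S\beta)(x)=\beta(x^{-1})$.

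With these two ingredients the AYD identity
\[
\nb(\alpha\rt f) = \alpha\ps{2}\rt f^{\ns{0}}\ot \alpha\ps{3}\,f^{\ns{1}}\,S(\alpha\ps{1})
\]
(in which $S=S^{-1}$ since $\C{O}(G)$ is commutative) is verified by evaluating both sides on $(m,x)\in M\times G$. The left side unwinds to $f\big((m\cdot x)\lt\alpha\big)=\langle\Ad_{x^{-1}}(m\ns{-1}),\alpha\rangle\,f(m\ns{0}\cdot x)$, while the right side unwinds to $\alpha\ps{3}(x)(S\alpha\ps{1})(x)\langle m\ns{-1},\alpha\ps{2}\rangle\,f(m\ns{0}\cdot x)$, and the two agree by the conjugation-pullback identity above.

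Finally, for stability I would establish $(f^{\ns{1}}\rt f^{\ns{0}})(m)=f(m\ns{0}\cdot m\ns{-1})$: indeed $(f^{\ns{1}}\rt f^{\ns{0}})(m)=\langle m\ns{-1},f^{\ns{1}}\rangle\,f^{\ns{0}}(m\ns{0})$, and pairing $m\ns{-1}\in U(\G{g})$ against the function $x\mapsto f(m\ns{0}\cdot x)=f^{\ns{0}}(m\ns{0})f^{\ns{1}}(x)$ returns $f(m\ns{0}\cdot m\ns{-1})$ by the definition \eqref{U(g)-O(G)-duality} of the pairing and linearity of $f$. Stability of $M$ over $U(\G{g})$, in the form $m\ns{0}\cdot m\ns{-1}=m$, then yields $f^{\ns{1}}\rt f^{\ns{0}}=f$. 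I expect the main obstacle to be the bookkeeping in the AYD step, i.e. correctly integrating the infinitesimal AYD axiom to its $\Ad$-equivariant group form and tracking the antipode and the conjugation pullback, rather than any single difficult estimate.
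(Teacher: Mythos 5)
Your proposal is correct and takes essentially the same route as the paper: the AYD compatibility is verified by evaluating on $M\times G$ using the integrated form $(m\cdot x)\ns{-1}\ot (m\cdot x)\ns{0}=\Ad_{x^{-1}}(m\ns{-1})\ot m\ns{0}\cdot x$ of the $U(\G{g})$-AYD condition together with the conjugation--pairing identity $\langle \Ad_{x^{-1}}(u),\a\rangle=\langle u,\a\ps{2}\rangle\,\a\ps{1}(x^{-1})\a\ps{3}(x)$, and stability is checked by exactly the same pairing computation reducing to $m\ns{0}m\ns{-1}=m$. The only difference is that you also write out the routine module and comodule axioms, which the paper's proof leaves implicit.
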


\begin{proof}
Let us first consider the left/right AYD compatibility, for the details of which we refer the reader to \cite{HajaKhalRangSomm04-I}. For any $a\in \C{O}(G/K)$, any $x\in G$, any $m\in M$, and any $f\in M^\vee $,
\begin{align*}
& (a\rt f)^{\ns{0}}(m) (a\rt f)^{\ns{1}} (x) = (a\rt f)(m\cdot x) = \\
& f((m\cdot x)\lt a) = \langle (m\cdot x)\ns{-1},a\rangle f((m\cdot x)\ns{0}) =  \\
& \langle \Ad_{x^{-1}}(m\ns{-1}),a\rangle f(m\cdot x) = \langle m\ns{-1},\Ad_x(a)\rangle f(m\cdot x) = \\
& \langle m\ns{-1},a\ps{2}\rangle a\ps{1}(x^{-1})a\ps{3}(x) f(m\cdot x) = (a\ps{2}\rt f^{\ns{0}})(m) (a\ps{3}f^{\ns{1}}S(a\ps{1}))(x),
\end{align*}
where 
\[
(m\cdot x)\ns{-1} \ot (m\cdot x)\ns{0} = \Ad_{x^{-1}}(m\ns{-1}) \ot m\ns{0}\cdot x
\]
is the integration of the $U(\G{g})$-AYD compatibility on $M$. As a result,
\[
\nb(a\rt f) = a\ps{2}\rt f^{\ns{0}} \ot a\ps{3}f^{\ns{1}}S(a\ps{1}).
\]
As for the stability, we see at once that
\begin{align*}
& (f^{\ns{1}}\rt f^{\ns{0}} )(m)= f^{\ns{0}} (m\lt f^{\ns{1}} ) = \langle m\ns{-1},f^{\ns{1}} \rangle f^{\ns{0}} (m\ns{0}) = \\
& f(m\ns{0}m\ns{-1}) = f(m),
\end{align*}
for any $m\in M$, and any $f\in M^\vee $. Therefore,
\[
f^{\ns{1}}\rt f^{\ns{0}}  = f.
\]
\end{proof}

\begin{remark}\label{rk-contra}
On the other extreme, the right/left SAYD module $M$ over $U(\G{g})$ may be considered as a right/left SAYD contra-module over $\C{O}(G)$. Furthermore, $M$ has even the structure of a right/left SAYD module over $\C{O}(G)$ by the right action \eqref{right-OG-action}, and the left $\C{O}(G)$-coaction 
\[
M\to \C{O}(G) \ot M, \qquad m\mapsto m^{\ns{-1}} \ot m^{\ns{0}}
\]  
given by
\[
\langle h, m^{\ns{-1}} \rangle m^{\ns{0}} = mh
\]
for any $h\in U(\G{h})$.
\end{remark}

Accordingly, the complex $C(\C{O}(G/K),M^\vee )$ may be realized as
\[
C(\C{O}(G/K),M^\vee ) = \bigoplus_{n\geq 0} C_n(\C{O}(G/K),M^\vee ), \qquad C_n(\C{O}(G/K),M^\vee ) := M^\vee \, \square_{\C{O}(G)}\, \C{O}(G/K)^{\ot\,n+1} 
\]
with the faces
\begin{align}\label{face-classical-Hopf}
\begin{split}
& \d_i:C_n(\C{O}(G/K),M^\vee ) \to C_{n-1}(\C{O}(G/K),M^\vee ), \qquad 0\leq i \leq n, \\
& \d_i (f\ot a_0\odots a_n) := f\ot a_0\odots a_ia_{i+1} \odots a_n, \qquad 0\leq i \leq n-1, \\
& \d_n (f\ot a_0\odots a_n) := (a_n^{\ps{-1}}\rt f)\ot a_n^{\ps{0}}a_0 \ot a_1\odots a_{n-1},
\end{split}
\end{align}
the degeneracies
\begin{align}\label{degeneracy-classical-Hopf}
\begin{split}
& \s_j:C_n(\C{O}(G/K),M^\vee ) \to C_{n+1}(\C{O}(G/K),M^\vee ), \qquad 0\leq j \leq n, \\
& \s_j (f\ot a_0\odots a_n) := f\ot a_0\odots a_j \ot 1 \ot a_{j+1} \odots a_n, 
\end{split}
\end{align}
and the cyclic operator
\begin{align}\label{cyclic-classical-Hopf}
\begin{split}
& \tau_n:C_n(\C{O}(G/K),M^\vee ) \to C_n(\C{O}(G/K),M^\vee ),  \\
& \tau_n (f\ot a_0\odots a_n) := (a_n^{\ps{-1}}\rt f)\ot a_n^{\ps{0}}\ot a_0 \ot a_1\odots a_{n-1},
\end{split}
\end{align}
which is nothing but the Hopf-cyclic homology complex of the $\C{O}(G)$-comodule algebra $\C{O}(G/K)$, with coefficients in the left/right SAYD module $M^\vee $ over $\C{O}(G)$. Compare with the left/left version in Subsection \ref{subsect-comod-alg-homol}, and see also \cite{HajaKhalRangSomm04-II}.


We shall denote the cyclic (resp. periodic cyclic) homology of the cyclic module above by $HC_\ast(\C{O}(G/K),M^\vee )$ (resp. $HP_\ast(\C{O}(G/K),M^\vee )$), and call it the \emph{(periodic) Hopf-cyclic homology of the $\C{O}(G)$-comodule algebra $\C{O}(G/K)$, with coefficients in the SAYD module $M^\vee $ over $\C{O}(G)$}.

Dually, in view of Subsection \ref{subsect-contra-comod-alg}, we now consider the complex 
\begin{align*}
& C(\C{O}(G/K),M) = \bigoplus_{n\geq 0} C^n(\C{O}(G/K),M), \\
& C^n(\C{O}(G/K),M) := \Hom(M^\vee \, \square_{\C{O}(G)}\, \C{O}(G/K)^{\ot\,n+1},\B{C}) \cong M\ot_{U(\G{g})} \Hom(\C{O}(G/K)^{\ot\,n+1}, \B{C})
\end{align*}
that fit into the pairing
\begin{align*}
&\langle\,,\,\rangle: C_n(\C{O}(G/K),M^\vee ) \ot C^n(\C{O}(G/K),M)\to \B{C}, \\
&\hspace{4cm} \langle f\ot a_0\odots a_n\,, m\ot_{U(\G{g})}\phi\rangle := f(m)\phi(a_0\odots a_n),
\end{align*}
for any $\phi \in \Hom(\C{O}(G/K)^{\ot\,n+1}, M)$. Accordingly, the cyclic structure on $C(\C{O}(G/K),M^\vee )$ induces a cocyclic structure on $C(\C{O}(G/K),M)$ via the cofaces
\begin{align*}
& d_i:C^{n-1}(\C{O}(G/K),M) \to C^n(\C{O}(G/K),M), \qquad 0\leq i \leq n, \\
& (d_i (m\ot_{U(\G{g})}\phi))(a_0\odots a_n) := m\phi(a_0 \odots a_ia_{i+1} \odots a_n), \qquad 0\leq i \leq n-1, \\
& (d_n (m\ot_{U(\G{g})}\phi))(a_0\odots a_n) := m\ns{0}\phi(S^{-1}(m\ns{-1})(a_n)a_0 \ot a_1 \odots a_{n-1}),
\end{align*}
the codegeneracies
\begin{align*}
& s_j:C^{n+1}(\C{O}(G/K),M) \to C^n(\C{O}(G/K),M), \qquad 0\leq j \leq n, \\
& (s_j (m\ot_{U(\G{g})}\phi))(a_0\odots a_n) := m\phi(a_0 \odots a_j\ot 1 \ot a_{j+1} \odots a_n), 
\end{align*}
and the cyclic operator
\begin{align*}
& t_n:C^n(\C{O}(G/K),M) \to C^n(\C{O}(G/K),M),  \\
& (t_n (m\ot_{U(\G{g})}\phi))(a_0\odots a_n) := m\ns{0}\phi(S^{-1}(m\ns{-1})(a_n)\ot a_0 \ot a_1 \odots a_{n-1}).
\end{align*}

Let us record also that (as was done in Subsection \ref{subsect-contra-comod-alg}) the cyclic complex $C(\C{O}(G/K),M)$ computes the Hopf-cyclic cohomology of the $\C{O}(G)$-comodule algebra $\C{O}(G/K)$, with coefficients in the right/left SAYD contra-module $M$ over $\C{O}(G)$. Indeed, the isomorphism
\[
M\ot_{U(\G{g})} \Hom(\C{O}(G/K)^{\ot\,n+1}, \B{C}) \cong \Hom(M^\vee \,\square_{\C{O}(G)}\,\C{O}(G/K)^{\ot\,n+1}, \B{C})
\]
allows us to reorganize the complex $C(\C{O}(G/K),M)$ with the cofaces
\begin{align}\label{coface-classical-Hopf}
\begin{split}
& d_i:C^{n-1}(\C{O}(G/K),M) \to C^n(\C{O}(G/K),M), \qquad 0\leq i \leq n, \\
& (d_i (m\ot_{U(\G{g})}\phi))(a_0\odots a_n) := m\phi(a_0 \odots a_ia_{i+1} \odots a_n), \qquad 0\leq i \leq n-1, \\
& (d_n (m\ot_{U(\G{g})}\phi))(a_0\odots a_n) := (m\lt a_n^{\ps{-1}})\phi(a_n^{\ps{0}}a_0 \ot a_1 \odots a_{n-1}),
\end{split}
\end{align}
the codegeneracies
\begin{align}\label{codegeneracy-classical-Hopf}
\begin{split}
& s_j:C^{n+1}(\C{O}(G/K),M) \to C^n(\C{O}(G/K),M), \qquad 0\leq j \leq n, \\
& (s_j (m\ot_{U(\G{g})}\phi))(a_0\odots a_n) := m\phi(a_0 \odots a_j\ot 1 \ot a_{j+1} \odots a_n), 
\end{split}
\end{align}
and the cyclic operator
\begin{align}\label{cocyclic-classical-Hopf}
\begin{split}
& t_n:C^n(\C{O}(G/K),M) \to C^n(\C{O}(G/K),M),  \\
& (t_n (m\ot_{U(\G{g})}\phi))(a_0\odots a_n) := (m\lt a_n^{\ps{-1}})\phi(a_n^{\ps{0}}\ot a_0 \ot a_1 \odots a_{n-1}),
\end{split}
\end{align}
which clearly is the complex computing the Hopf-cyclic cohomology of the $\C{O}(G)$-comodule algebra $\C{O}(G/K)$, with coefficients in the right/left SAYD contra-module $M$ over $\C{O}(G)$.

Similarly to the cyclic case above, we shall denote the cyclic (resp. periodic cyclic) homology of the above cocyclic module by $HC^\ast(\C{O}(G/K),M)$ (resp. $HP^\ast(\C{O}(G/K),M)$), and we shall refer to it as the \emph{(periodic) Hopf-cyclic cohomology of the $\C{O}(G)$-comodule algebra $\C{O}(G/K)$, with coefficients in the SAYD contra-module $M$ over $\C{O}(G)$}.

\subsection{The van Est isomorphism}\label{subsect-vanEst-classical-Hopf-alg}~

\begin{proposition}\label{prop-vanEst-classical-I}
Let $\C{O}(G)$ be the Hopf algebra of functions over the group $G$, and let $\G{g}$ denote the Lie algebra of $G$. Let also $K\subseteq G$ be a maximal compact subgroup, the Lie algebra of which being $\G{k}$. Furthermore, let $M$ be a right/left SAYD module over $U(\G{g})$, so that the $\G{g}$-action may be integrated into a $G$-action, and $M^\vee  = \Hom(M,\B{C})$ the corresponding left/right SAYD contra-module. Then,
\begin{equation}\label{isom-E_1-cohom}
HC_\ast(U(\G{g}),U(\G{k}),M^\vee ) \cong HC_\ast(\C{O}(G/K),M^\vee ).
\end{equation}
\end{proposition}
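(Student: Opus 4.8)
The plan is to realize both sides as the cyclic homology of a single cyclic $k$-module, with the Hopf pairing \eqref{U(g)-O(G)-duality} serving as the only bridge. By the construction recalled in Subsection \ref{subsect-module-coalg-contra-coeff}, the left-hand side $HC_\ast(U(\G{g}),U(\G{k}),M^\vee)$ is the cyclic homology of the linear dual, in the variable $M\leftrightarrow M^\vee$, of the relative cocyclic module $C(U(\G{g}),U(\G{k}),M)$, the quotient coalgebra $\C{C}=U(\G{g})\otimes_{U(\G{k})}k$ being left untouched. On the other hand, the cyclic module $C(\C{O}(G),\C{O}(K),M^\vee)$ computing $HC_\ast(\C{O}(G),M^\vee)$ was built in Subsection \ref{subsect-coord-alg-funct} by dualizing the very same cocyclic module $C(U(\G{g}),U(\G{k}),M)$ through \eqref{U(g)-O(G)-duality}, the only new feature being that $\C{C}$ is now also carried across the pairing into the subalgebra $\C{O}(G/K)\subseteq\C{O}(G)$. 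Thus both sides dualize the same object, and the two dualizations differ solely in whether the coalgebra slot is transported across the pairing.

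First I would pin down the duality $\C{C}\leftrightarrow\C{O}(G/K)$. The pairing \eqref{U(g)-O(G)-duality} descends to a pairing between $\C{C}$ and the subalgebra $\C{O}(G/K)\subseteq\C{O}(G)$, and the main technical input is that this restricted pairing is nondegenerate, realizing $\C{O}(G/K)$ as the representative functions on the coalgebra $\C{C}$. Granting this, the pairing carries a left $U(\G{g})$-linear map $\C{C}^{\otimes\,n+1}\to M^\vee$ to a $G$-invariant element of $M^\vee\otimes\C{O}(G/K)^{\otimes\,n+1}$, which by the identification $(M^\vee\otimes\C{O}(G/K)^{\otimes\,n+1})^G=M^\vee\,\square_{\C{O}(G)}\,\C{O}(G/K)^{\otimes\,n+1}$ established above is exactly a chain of the comodule-algebra complex; this produces a degreewise isomorphism $C_n(U(\G{g}),U(\G{k}),M^\vee)\cong C_n(\C{O}(G),\C{O}(K),M^\vee)$.

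It then remains to check that this degreewise isomorphism intertwines the cyclic structures. For the inner faces $\d_i$ and the degeneracies $\s_j$ this is automatic, since they merely interchange the multiplication of $\C{O}(G/K)$ with the comultiplication of $\C{C}$ under the pairing; the delicate operators are the last face $\d_n$ and the cyclic operator $\tau_n$, which on the enveloping side are written through the contra-action $\a$ and $S^{-1}$, and on the coordinate side through the comodule structure \eqref{O(G)-coaction-on-CM} and the action \eqref{O(G)-action-on-CM}. Matching them amounts to the facts that the antipodes of $U(\G{g})$ and $\C{O}(G)$ are adjoint for \eqref{U(g)-O(G)-duality} and that the AYD and stability data of $M$ transport to those of $M^\vee$, which is exactly the content of the preceding propositions of Subsection \ref{subsect-coord-alg-funct}. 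Once these compatibilities are secured the two cyclic modules are isomorphic, and passing to cyclic homology gives \eqref{isom-E_1-cohom}. I expect the main obstacle to be the nondegeneracy of \eqref{U(g)-O(G)-duality} restricted to $\C{C}\otimes\C{O}(G/K)$: it is this that lets one dualize the coalgebra variable and dualize it back up to a canonical isomorphism rather than merely a comparison map, and it is where the algebraicity of $G$ together with the choice of $\G{k}$ as the Lie algebra of a maximal compact $K$ genuinely enter.
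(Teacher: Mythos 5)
Your plan founders on one decisive point: the pairing \eqref{U(g)-O(G)-duality} is not a perfect duality in the classical setting, and the degreewise isomorphism $C_n(U(\G{g}),U(\G{k}),M^\vee)\cong C_n(\C{O}(G),\C{O}(K),M^\vee)$ you assert does not exist. The enveloping-algebra side is a full Hom-space, $\Hom_{U(\G{g})}(\C{C}^{\ot\,n+1},M^\vee)$ with $\C{C}=U(\G{g})\ot_{U(\G{k})}k$, while the coordinate-algebra side $M^\vee\,\square_{\C{O}(G)}\,\C{O}(G/K)^{\ot\,n+1}$ consists of \emph{finite} tensors of regular functions. Nondegeneracy of the restricted pairing --- which you correctly single out as the needed input --- produces only an injective comparison map from the cotensor complex into the Hom-complex (the classical analogue of \eqref{q-van-est-II}); it cannot produce surjectivity, because $\C{O}(G/K)$ sits inside the full linear dual $\C{C}^\ast$ as a proper subspace: dualization is not involutive in infinite dimensions, and $\C{C}^\ast$ is a space of formal power series of which the regular functions form a small part. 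In particular the direction you describe, sending a $U(\G{g})$-linear map $\C{C}^{\ot\,n+1}\to M^\vee$ to an element of $M^\vee\ot\C{O}(G/K)^{\ot\,n+1}$, is not even well defined for a general such map, only for ``representative'' ones. Concretely, take $M=k$ and $n=1$ with $\G{g}\neq\G{k}$: then $C_1(U(\G{g}),U(\G{k}),k)\cong\Hom_{U(\G{k})}(\C{C},k)$ is the full dual of the $\G{k}$-coinvariants of $S(\G{g}/\G{k})$, an infinite-dimensional space, hence has uncountable dimension, whereas $(k\ot\C{O}(G/K)^{\ot\,2})^G$ is a subspace of $\C{O}(G/K)^{\ot\,2}$ and has countable dimension. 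So the two cyclic modules are not isomorphic in degrees $\geq 1$; they are only quasi-isomorphic, and establishing that quasi-isomorphism is the entire content of the proposition --- it cannot be ``automatic'' from commutation of an injective chain map with faces and degeneracies.

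This is exactly why the paper argues at the level of homology rather than chains: it filters $M$ by the Jara--\c{S}tefan filtration \cite{JaraStef06} with trivial comodule subquotients, passes to the induced decreasing filtration on $M^\vee$, and compares the two spectral sequences at $E_1$. On the enveloping-algebra side the $E_1$-term is Chevalley--Eilenberg cohomology by \cite{ConnMosc}; on the coordinate-algebra side, the Hochschild--Kostant--Rosenberg theorem identifies the Hochschild classes with invariant forms $A(G/K,M^\vee)^G$, on which the Connes operator acts as the de Rham differential, and since $G/K$ is Euclidean this computes continuous group cohomology via the injective resolution \eqref{resolution-Lie-alg}. The isomorphism at $E_1$ is then the classical van Est theorem of Subsection \ref{subsect-vanEst} --- itself a quasi-isomorphism given by integration over geodesic simplices, not a cochain isomorphism. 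Your strategy is essentially the one the paper \emph{does} use in the $q$-deformed setting (Theorems \ref{thm-q-adic-vanEst-I} and \ref{thm-q-adic-vanEst-II}), where the nondegenerate quantum Killing pairing is available and the complexes are built so as to match under it; in the classical case the ``two dualizations of the same cocyclic module'' genuinely differ in size, and no pairing argument can close that gap.
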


\begin{proof}
It follows from \cite[Lemma 6.2]{JaraStef06} that since $M$ is an AYD module over $U(\G{g})$, it admits a (bounded) increasing filtration $(F_pM)_{p\in \B{Z}}$, so that $F_pM / F_{p-1}M$ is a trivial $U(\G{g})$-comodule for any $p\in \B{Z}$. Accordingly, there is a decreasing filtration on $M^\vee $ by $F_pM^\vee :=\Hom_k(F_pM,\B{C})$, which satisfies 
\[
\frac{F_{p-1}M^\vee }{F_pM^\vee } \cong \left(\frac{F_pM}{F_{p-1}M}\right)^\vee.
\]
We shall now compare the complexes $C(U(\G{g}),U(\G{k}),F_{p-1}M^\vee /F_pM^\vee )$ and $C(\C{O}(G/K),F_{p-1}M^\vee /F_pM^\vee )$ in the $E_1$-level of the associated spectral sequences. 

As is given in the proof of \cite[Thm. 15]{ConnMosc}, the Hochschild boundary of the former coincides with zero map, while the Connes coboundary operator corresponds to the Lie algebra (Chevalley-Eilenberg) cohomology coboundary. 

As for the latter, it follows from the Hochschild-Kostant-Rosenberg theorem that its Hochschild homology classes may be identified with the space $A(G/K,F_{p-1}M^\vee /F_pM^\vee )^G$ of $(F_{p-1}M^\vee /F_pM^\vee )$-valued invariant differential forms\footnote[1]{The Hochschild-Kostant-Rosenberg map commutes with the (diagonal) $G$-action.}. Moreover, it is also known that the Connes coboundary operator corresponds, on these Hochschild classes, the exterior derivative of differential forms. On the other hand, since $K\subseteq G$ is a maximal compact subgroup, it is also known that $G/K$ is diffeomorphic to an Euclidean space. As such,
\[
\xymatrix{
0\ar[r] &M^\vee \ar[r] & A^0(G/K, F_{p-1}M^\vee /F_pM^\vee) \ar[r]^{d} & A^1(G/K, F_{p-1}M^\vee /F_pM^\vee) \ar[r]^{\,\,\,\,\,\,\,\,\,\,\,\,\,\,\,\,\,\,\,\,\,\,\,\,\,\,\,\,\,\,\,\,\,\,\,\,d} & \ldots 
}
\]
is an (continuously) injective resolution of $M^\vee $, see \eqref{resolution-Lie-alg} above, and hence the homology with respect to the Connes coboundary operator corresponds nothing but to the (continuous) group cohomology, see also \cite[Chpt. II]{Brown-book}.

Finally, the isomorphism on the level of the $E_1$-terms is given by the van Est isomorphism; see Subsection \ref{subsect-vanEst}. 
\end{proof}

Let us record next the homological counterpart of the above result. 

\begin{proposition}\label{prop-vanEst-classical-II}
Let $\C{O}(G)$ be the Hopf algebra of functions over the group $G$, and let $\G{g}$ denote the Lie algebra of $G$. Let also $K\subseteq G$ be a maximal compact subgroup, the Lie algebra of which being $\G{k}$. Furthermore, let $M$ be a right/left SAYD module over $U(\G{g})$, so that the $\G{g}$-action may be integrated into a $G$-action. Then,
\begin{equation}\label{isom-E_1}
HC^\ast(U(\G{g}),U(\G{k}),M) \cong HC^\ast(\C{O}(G/K),M).
\end{equation}
\end{proposition}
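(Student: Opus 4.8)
The plan is to mirror the strategy of Proposition \ref{prop-vanEst-classical-I}, but now working with the cohomological (contra-module) side directly. The statement \eqref{isom-E_1} asserts an isomorphism
\[
HC^\ast(U(\G{g}),U(\G{k}),M) \cong HC^\ast(\C{O}(G),M),
\]
where the left-hand side is the relative Hopf-cyclic cohomology of the enveloping algebra computed by the complex $C(U(\G{g}),U(\G{k}),M)$ of Subsection \ref{subsect-comod-coalg-cohomol}, and the right-hand side is the Hopf-cyclic cohomology of the $\C{O}(G)$-comodule algebra $\C{O}(G/K)$ with SAYD contra-module coefficients, computed by the complex $C(\C{O}(G),\C{O}(K),M)$ constructed in Subsection \ref{subsect-coord-alg-funct}. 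First I would observe that both of these cohomological complexes arise, by the dualization procedures recorded in Remark \ref{remark-mod-alg-contra-coeff} and in Subsection \ref{subsect-contra-comod-alg}, as the linear duals of the corresponding homological complexes $C(U(\G{g}),U(\G{k}),M^\vee)$ and $C(\C{O}(G),\C{O}(K),M^\vee)$ that appear in Proposition \ref{prop-vanEst-classical-I}. This is precisely the content of the pairings displayed just before the statement.

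Next I would exploit the finite-type filtration. Since $M$ is an AYD module over $U(\G{g})$, \cite[Lemma 6.2]{JaraStef06} furnishes a bounded increasing filtration $(F_pM)_{p\in\B{Z}}$ whose subquotients $F_pM/F_{p-1}M$ are trivial $U(\G{g})$-comodules, exactly as in the proof of Proposition \ref{prop-vanEst-classical-I}. This induces a filtration on each cohomological complex and hence a spectral sequence on each side. On the enveloping-algebra side, by the proof of \cite[Thm. 15]{ConnMosc} the Hochschild differential vanishes at the $E_1$-page and the Connes coboundary becomes the relative Chevalley-Eilenberg coboundary, so the $E_1$-term computes the relative Lie algebra cohomology $H^\ast(\G{g},\G{k},-)$ of the subquotient coefficients. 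On the coordinate-algebra side, the Hochschild-Kostant-Rosenberg theorem identifies the Hochschild classes with the invariant differential forms $A^\bullet(G/K,-)^G$, the Connes coboundary corresponds to the exterior derivative, and—because $G/K$ is diffeomorphic to Euclidean space—the resolution \eqref{resolution-Lie-alg} shows that the $E_1$-term computes the continuous group cohomology $H^\ast_c(G,-)$.

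The classical van Est isomorphism of Subsection \ref{subsect-vanEst}, via the explicit integration-of-forms map $\Phi$, then identifies the two $E_1$-terms compatibly with the remaining differentials; by the usual comparison-of-spectral-sequences argument (the filtration being bounded, so convergence is automatic) this isomorphism lifts to the abutments, yielding \eqref{isom-E_1}. An alternative, and perhaps cleaner, route is to deduce the cohomological statement from the homological one of Proposition \ref{prop-vanEst-classical-I} purely formally: since the cohomological complexes are the $k$-linear duals of the homological complexes, and since taking linear duals is exact over the field $k$, one has $HC^\ast(-,M) \cong HC_\ast(-,M^\vee)^\vee$ on each side (cf. Corollary \ref{BlancWigner-Lie-alg-cyclic-version}), and then \eqref{isom-E_1-cohom} dualizes termwise to give \eqref{isom-E_1}. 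I expect the main obstacle to be precisely the bookkeeping needed to make this duality fully rigorous: one must check that the filtration $(F_pM^\vee)$ used on the cohomological side is the one dual to $(F_pM)$, that the HKR identification intertwines the diagonal $G$-action in the contra-module setting, and—most delicately—that the van Est map $\Phi$ and its transpose are genuine isomorphisms at the level of the relevant subquotients rather than merely quasi-isomorphisms up to the filtration. Ensuring the spectral sequences converge to the correct abutments (which is where the boundedness of the filtration is essential) is the technical heart of the argument.
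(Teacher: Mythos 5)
Your first route does not go through, because it recycles the $E_1$-identifications of Proposition \ref{prop-vanEst-classical-I}, which pertain to the homological complexes with $M^\vee$-coefficients, whereas the two complexes at hand here are of the opposite (``dual'') type. On the enveloping-algebra side, $C^n(U(\G{g}),U(\G{k}),F_pM/F_{p-1}M) = (F_pM/F_{p-1}M)\ot_{U(\G{k})}\C{C}^{\ot n}$, and by the proof of \cite[Thm. 15]{ConnMosc} the surviving $E_1$-differential is the Chevalley--Eilenberg \emph{homology} boundary: the $E_1$-term is relative Lie algebra \emph{homology}, not the Lie algebra cohomology $H^\ast(\G{g},\G{k},-)$ you assert (recall that Connes--Moscovici identify $HC^\ast(U(\G{g}),M)$ with $H_\ast(\G{g},M)$, not with $H^\ast(\G{g},M)$). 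On the coordinate-algebra side, $C^n(\C{O}(G),\C{O}(K),M)\cong M\ot_{U(\G{g})}\Hom(\C{O}(G/K)^{\ot\,n+1},k)$ consists of functionals on the algebra, taken into $G$-\emph{coinvariants}; the HKR identification with invariant differential forms simply does not apply to this complex. The identification that does apply --- and is the one the paper uses --- is \cite[Lemma 45]{Conn85}: the Hochschild classes are the $G$-coinvariants $[C_n(G/K,M)]_G$ of de Rham \emph{currents}, the Connes operator acts as the de Rham boundary on currents, and, $G/K$ being Euclidean, the currents form a projective resolution of $M$, so the $E_1$-term is (continuous) group \emph{homology}. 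Consequently the two $E_1$-pages cannot be compared by the classical van Est map $\Phi$ (which relates Lie algebra cohomology computed by forms to group cohomology computed by cochains); one needs a homological counterpart of van Est, which the paper extracts from Blanc--Wigner \cite[Thm. 2]{BlanWign83} together with Corollary \ref{BlancWigner-Lie-alg-cyclic-version}. You do flag ``the van Est map $\Phi$ and its transpose'' as a bookkeeping concern at the end, but this is not bookkeeping: it is exactly the point where a different theorem must be invoked, and it is the missing idea in your argument.

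Your alternative ``formal dualization'' route also has a genuine gap, because the dualities run in opposite directions on the two sides. On the coordinate-algebra side one has, by construction, $C^\bullet(\C{O}(G),\C{O}(K),M) = C_\bullet(\C{O}(G),\C{O}(K),M^\vee)^\vee$, hence indeed $HC^\ast(\C{O}(G),M)\cong HC_\ast(\C{O}(G),M^\vee)^\vee$. But on the enveloping-algebra side the hom-tensor adjunction gives $C_\bullet(U(\G{g}),U(\G{k}),M^\vee) = C^\bullet(U(\G{g}),U(\G{k}),M)^\vee$, i.e.
\[
HC_\ast(U(\G{g}),U(\G{k}),M^\vee)\cong HC^\ast(U(\G{g}),U(\G{k}),M)^\vee,
\]
which is the reverse of the relation you need. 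Dualizing \eqref{isom-E_1-cohom} therefore produces
\[
HC^\ast(U(\G{g}),U(\G{k}),M)^{\vee\vee}\;\cong\; HC^\ast(\C{O}(G),M),
\]
and passing from the double dual back to $HC^\ast(U(\G{g}),U(\G{k}),M)$ requires these spaces to be finite dimensional, which is neither assumed nor established; for an infinite-dimensional vector space $V$ the canonical map $V\to V^{\vee\vee}$ is not an isomorphism. So this shortcut does not prove \eqref{isom-E_1} either.
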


\begin{proof}
It follows from \cite[Lemma 6.2]{JaraStef06} that since $M$ is an AYD module over $U(\G{g})$, it admits a (bounded) increasing filtration $(F_pM)_{p\in \B{Z}}$, so that $F_pM / F_{p-1}M$ is a trivial $U(\G{g})$-comodule for any $p\in \B{Z}$. Accordingly, we compare the complexes $C(U(\G{g}),U(\G{k}),F_pM / F_{p-1}M) $ and $C(\C{O}(G/K),F_pM / F_{p-1}M)$ in the $E_1$-level of the associated spectral sequences. 

As is given in the proof of \cite[Thm. 15]{ConnMosc}, the Hochschild coboundary of the former coincides with the zero map, while the Connes boundary operator corresponds to the Lie algebra (Chevalley-Eilenberg) homology boundary. 

As for the latter complex, dually to the previous proposition (see also \cite[Lemma 45(a)]{Conn85}) the Hochschild cohomology classes may be identified with the $G$-coinvariants $[C_\ast(G/K,M)]_G$ of de Rham currents, with coeffcients in $M$. It then follows from \cite[Lemma 45(b)]{Conn85} that on these Hochschild classes the Connes boundary map operates as de Rham boundary for currents. Once again, since $G/K$ is diffeomorphic to an Euclidean space, 
\begin{equation}\label{G-homology}
\xymatrix{
\ldots\ar[r] & C_1(G/K,M) \ar[r]^{\p_{dR}} & C_0(G/K,M) \ar[r]^{\,\,\,\,\,\,\,\,\,\,\,\,\,\,\,\p_{dR}}\ar[r] &M\ar[r] & 0 
}
\end{equation}
is a (continuously) projective resolution of $M$. As such, its coinvariants compute the group homology, \cite[Chpt. II]{Brown-book}.

Finally, the desired isomorphism is induced by the van Est isomorphism on the dual picture, in view of \cite[Thm. 2]{BlanWign83} and Corollary \ref{BlancWigner-Lie-alg-cyclic-version}.
\end{proof}

We can further state the following extension of \cite[Thm. 2]{BlanWign83}.

\begin{corollary}
Let $\C{O}(G)$ be the Hopf algebra of functions over the $G$, and let $N$ be a left/right SAYD module over $\C{O}(G)$. Let also $N^\vee = \Hom(N,\B{C})$ denote the corresponding right/left SAYD contra-module over $\C{O}(G)$. Then,
\[
HC^\ast(\C{O}(G),N^\vee)^\vee \cong HC_\ast(\C{O}(G),N).
\]
\end{corollary}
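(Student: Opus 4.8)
The plan is to recognise that the two complexes in play are already $k$-linear duals of one another, and then to transport this duality through (co)homology. By the construction of Subsection~\ref{subsect-contra-comod-alg}, specialised to the coordinate Hopf algebra in Subsection~\ref{subsect-coord-alg-funct}, the cochain space underlying $HC^\ast(\C{O}(G),N^\vee)$ is by its very definition
\[
C^n(\C{O}(G),N^\vee) = \Hom\big(N\,\square_{\C{O}(G)}\,\C{O}(G/K)^{\ot\,n+1},\,k\big) = \Hom\big(C_n(\C{O}(G),N),\,k\big),
\]
and each cocyclic operator is the $k$-linear transpose of the corresponding cyclic operator on $C_\bullet(\C{O}(G),N)$, precisely because that cocyclic structure was manufactured from the evaluation pairing $\langle\,,\rangle$ of Subsection~\ref{subsect-contra-comod-alg}. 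Hence the bicomplex computing $HC^\ast(\C{O}(G),N^\vee)$ is the $k$-linear dual of the bicomplex computing $HC_\ast(\C{O}(G),N)$.

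I would then appeal to the exactness of $\Hom(-,k)$ over the field $k$. An exact contravariant functor carries homology to cohomology, so the perfect evaluation pairing descends to a perfect pairing at the level of (co)homology; equivalently the universal coefficient sequence degenerates since $\ext^1_k$ vanishes. This furnishes a natural, degree-preserving isomorphism
\[
HC^n(\C{O}(G),N^\vee) \cong \Hom\big(HC_n(\C{O}(G),N),\,k\big) = HC_n(\C{O}(G),N)^\vee ,
\]
which is the coordinate-algebra counterpart of the Lie-algebraic duality of Corollary~\ref{BlancWigner-Lie-alg-cyclic-version}, and the Hopf-cyclic lift of the classical Blanc--Wigner duality \cite[Thm.~2]{BlanWign83}.

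It remains to convert $HC^n(\C{O}(G),N^\vee)\cong HC_n(\C{O}(G),N)^\vee$ into the asserted form. Dualising once more yields $HC^n(\C{O}(G),N^\vee)^\vee\cong HC_n(\C{O}(G),N)^{\vee\vee}$, after which I would identify the right-hand side with $HC_n(\C{O}(G),N)$ via the canonical evaluation map. This last identification is the main obstacle, for $V\to V^{\vee\vee}$ is an isomorphism only when $V$ is finite dimensional; one must therefore verify that each $HC_n(\C{O}(G),N)$ is finite dimensional. In the present situation this is guaranteed exactly as in the proof of Proposition~\ref{prop-vanEst-classical-I}: since $G/K$ is diffeomorphic to a Euclidean space, the Hochschild-Kostant-Rosenberg identification collapses the relevant $E_1$-page onto the finite-dimensional relative Lie algebra (co)homology of the pair $(\G{g},\G{k})$ with coefficients in the finite-dimensional module $N$, so each cyclic homology group is finite dimensional and the evaluation map is indeed an isomorphism.

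Alternatively, one can avoid the double dual entirely by concatenating the isomorphisms already established. Writing $N=M^\vee$ for a SAYD module $M$ over $U(\G{g})$ as in Subsection~\ref{subsect-coord-alg-funct}, Proposition~\ref{prop-vanEst-classical-I}, Proposition~\ref{prop-U(g)-contra-module-coeff} and Corollary~\ref{BlancWigner-Lie-alg-cyclic-version} identify $HC_\ast(\C{O}(G),N)$ with $HC_\ast(\G{g},\G{k},M)^\vee$, while Proposition~\ref{prop-vanEst-classical-II} together with the Connes--Moscovici identification \cite[Thm.~15]{ConnMosc} identifies $HC^\ast(\C{O}(G),N^\vee)$ with $HC_\ast(\G{g},\G{k},M)$; a single application of $(-)^\vee$ then interchanges the two and reproduces the statement, with the dual now folded into Corollary~\ref{BlancWigner-Lie-alg-cyclic-version} rather than applied to the cyclic homology itself.
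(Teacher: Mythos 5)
Your primary route founders on the very obstacle you flag. The corollary places no finiteness hypothesis on $N$: it is an arbitrary left/right SAYD module over $\C{O}(G)$, so the phrase ``the finite-dimensional module $N$'' imports an assumption that is simply not in the statement, and the HKR/$E_1$-page reduction cannot supply it. That reduction identifies $HC_n(\C{O}(G),N)$ with (a dual of) the relative Lie algebra (co)homology of $(\G{g},\G{k})$ with coefficients built from $N$, and these groups are finite dimensional only when $N$ is. For infinite-dimensional $N$ (e.g.\ $N=M^\vee$ with $M$ infinite dimensional and trivial coaction) the groups $HC_n(\C{O}(G),N)$ are infinite dimensional, and the canonical map $V\to V^{\vee\vee}$ is then injective but never surjective, so the universal-coefficients-then-double-dualize strategy cannot establish the statement as written. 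What that strategy does prove, correctly and essentially by definition of the complexes in Subsection \ref{subsect-coord-alg-funct} (the cochain complex for $HC^\ast(\C{O}(G),N^\vee)$ \emph{is} the $k$-linear dual of the chain complex for $HC_\ast(\C{O}(G),N)$), is the different duality $HC^\ast(\C{O}(G),N^\vee)\cong HC_\ast(\C{O}(G),N)^\vee$, which is not equivalent to the corollary without finiteness.

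Your closing paragraph, however, is the right proof, and it is precisely the derivation the paper intends: the corollary is stated as an immediate consequence of the preceding results, with no separate argument given. Writing $N=M^\vee$ with $M$ the corresponding SAYD (contra-)module as in Subsection \ref{subsect-coord-alg-funct}, Proposition \ref{prop-vanEst-classical-I}, Proposition \ref{prop-U(g)-contra-module-coeff} and Corollary \ref{BlancWigner-Lie-alg-cyclic-version} give $HC_\ast(\C{O}(G),N)\cong HC_\ast(\G{g},\G{k},M)^\vee$, while Proposition \ref{prop-vanEst-classical-II} together with \cite[Thm.~15]{ConnMosc} gives $HC^\ast(\C{O}(G),N^\vee)\cong HC_\ast(\G{g},\G{k},M)$; a single application of $(-)^\vee$ to the latter yields the claim. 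The essential point your first route misses is that the dual must be taken exactly once, and at the Lie-algebra stage --- where Corollary \ref{BlancWigner-Lie-alg-cyclic-version} is a clean universal-coefficient duality valid with no finiteness hypothesis --- rather than twice at the $\C{O}(G)$ stage; this is what makes both sides of the corollary isomorphic to the \emph{same} single dual $HC_\ast(\G{g},\G{k},M)^\vee$. I would promote that final paragraph to be the proof and discard, or demote to a remark, the double-dual argument.
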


\section{The van Est isomorphism on quantized Hopf algebras}\label{sect-h-adic-vanEst}

In this section we shall develop the van Est isomorphisms for the $h$-adic quantum groups, using a natural $h$-filtration on their Hopf-cyclic complexes.

\subsection{Quantized Hopf algebras}\label{subsect-quaintized-Hopf-alg}~

In the present subsection we shall recall the quantized Hopf algebras on which the van Est isomorphism will be considered. Namely, we shall take a quick overview of the quantized universal enveloping algebras and the quantized function algebras.

Let us recall from \cite[Def. 3.10]{FengTsyg91} and \cite[Def. 6.2.4]{ChariPressley-book}, see also \cite[Def. 1.2(b)]{Gava02}, that a quantization of a Poisson Hopf algebra $A_0$ over $\B{C}$ is a (topological, with respect to the $h$-adic topology) Hopf algebra $A$ over $\B{C}[[h]]$ such that $A/hA$ is isomorphic, as Poisson Hopf algebras, to $A_0$.

Motivated by \cite[Thm. 3.13]{FengTsyg91} and \cite[Def. 6.2.4]{ChariPressley-book}, given a Poisson-Lie group $G$, we shall denote a quantization of $\C{O}(G)$ by $\C{O}_h(G)$, and we shall call it the \emph{quantized algebra of functions} over $G$, or simply the \emph{quantized function algebra}.

In particular, following \cite[Sect. 7.1]{ChariPressley-book} - see also \cite[Ex. 3.15]{FengTsyg91}, the quantized function algebra $\C{O}_h(SL_2(\B{C}))$ is the topological Hopf algebra (over $\B{C}[[h]]$) given by
\begin{align*}
& ac=e^{-h}ca, \qquad bd = e^{-h}db, \qquad ab=e^{-h}ba, \qquad cd=e^{-h}dc, \\
& bc=cb, \qquad ad-da=(e^{-h}-e^h)bc,\qquad ad-e^{-h}bc = 1,\\
& \D(a)=a\widehat{\ot}a + b \widehat{\ot} c, \qquad \D(b)=a\widehat{\ot} b + b \widehat{\ot} d, \\
& \D(c)=c\widehat{\ot}a + d \widehat{\ot} c, \qquad \D(d)=c\widehat{\ot} b + d \widehat{\ot} d,\\
& \ve(a) = \ve (d) = 1, \qquad \ve(b) = \ve(c) = 0,\\
& S(a) = d, \quad S(b) = -e^{h}b, \quad S(c)=-e^{-h}c,\quad S(d)=a.
\end{align*}

Dually, a quantization of a co-Poisson Hopf algebra $H_0$ over $\B{C}$ is a (topological, with respect to the $h$-adic topology) Hopf algebra $H$ over $\B{C}[[h]]$ such that $H/hH$ is isomorphic, as co-Poisson Hopf algebras, to $H_0$.

As was noted in \cite[Thm. 3.11]{FengTsyg91}, given a Lie bialgebra $\G{g}$, the universal enveloping algebra $U(\G{g})$ has a unique quantization, called the \emph{quantized universal enveloping algebra}, which is denoted by $U_h(\G{g})$.

In particular, the basic example corresponding to the Lie bialgebra structure on $s\ell_2(\B{C})$, which is given by
\begin{align*}
& \d:s\ell_2(\B{C}) \to s\ell_2(\B{C})\wedge s\ell_2(\B{C}),\\
& \d(H) := 0, \qquad \d(E) := E\wedge H, \qquad \d(F) := F\wedge H.
\end{align*}
is given in \cite[Sec. 6.4]{ChariPressley-book}. More precisely, as was presented in \cite[Def.-Prop. 6.4.3]{ChariPressley-book} - see also \cite[Ex. 3.15]{FengTsyg91} and \cite[Subsect. 3.1.5]{KlimSchm-book} - the quantized universal enveloping algebra $U_h(s\ell_2(\B{C}))$ is the topological Hopf algebra (over $\B{C}[[h]]$) given by
\begin{align*}
& [H,E]=2E, \qquad [H,F] = -2F, \qquad [E,F] = \frac{e^{hH}-e^{-hH}}{e^h-e^{-h}}, \\
& \D(H) = 1\widehat{\ot} H + H \widehat{\ot} 1, \quad \D(E) = E\widehat{\ot} e^{hH} + 1 \widehat{\ot} E, \quad \D(F) = F\widehat{\ot} 1 + e^{-hH} \widehat{\ot} F, \\
& \ve(H) = \ve(E) = \ve(F) = 0,\\
& S(H) = -H, \qquad S(E)=-Ee^{-hH}, \qquad S(F)=-e^{hH}F.
\end{align*}

Finally, along the lines of Subsection \ref{subsect-coord-alg-funct}, we denote by $\C{O}_h(G/K) \subseteq \C{O}_h(G)$ the dual subalgebra of the quotient coalgebra $\C{C}_h:=U_h(\G{g})\ot_{U_h(\G{k})} \B{C}[[h]]$, in view of the duality between $\C{O}_h(G)$ and $U_h(\G{g})$, where $K \subseteq G$ is a maximal compact subgroup, and $\G{k}$ stands for the Lie algebra of $K$. 

\subsection{Quantized van Est isomorphism}\label{subsect-quantized-vanEst}~

Following the terminology of \cite[Sect. 1]{Gava02}, we shall mean by a $\B{C}[[h]]$-module; a torsionless, complete, and separated $\B{C}[[h]]$-module, equipped with the $h$-adic topology. Accordingly, if $V$ is a $\B{C}[[h]]$-module, then as was noted in \cite[Subsect. 1.1]{Gava02}, we have $V \cong V_0[[h]]$ as $k[[h]]$-modules, where $V_0:=V/hV$ is the \emph{semi-classical limit} of $V$.

Let, now, $V$ be a (right) module over a quantized Hopf algebra $P$; that is, $V$ is a $k[[h]]$-module equipped with a $\B{C}[[h]]$-linear (hence, continuous) map
\begin{equation}\label{P-action-quantized}
\rt: V \widehat{\ot} P \to V
\end{equation}
satisfying the usual compatibilities for a module.Let us note that the tensor product refers to the completed tensor product over $\B{C}[[h]]$. Tensoring both sides with $\B{C}$ over $\B{C}[[h]]$, then, renders a linear map
\begin{equation}\label{P-action-semiclassical}
\rt: V_0 \ot P_0 \to V_0,
\end{equation}
which also satisfies the module compatibilities. That is, the $\B{C}$-module $V_0$ is then a module over the (Poisson, or co-Poisson) Hopf algebra $P_0:=P/P_0$.

Similarly, a (left) comodule $V$ over a quantized Hopf algebra $P$ is a $\B{C}[[h]]$-module equipped with a $\B{C}[[h]]$-linear map
\begin{equation}\label{P-coaction-quantized}
\nb:V\to P\widehat{\ot} V
\end{equation}
that satisfies the comodule compatibilities. Similarly, the application of $\ot_{\B{C}[[h]]}\,\B{C}$ yields
\begin{equation}\label{P-coaction-semiclassical}
\nb:V_0\to P_0\ot V_0
\end{equation}
satisfying the comodule compatibilities.

Along the lines above, we define a (right/left) SAYD module $V$ over a quantized Hopf algebra $P$ as a $\B{C}[[h]]$-module equipped with a right $P$-action as \eqref{P-action-quantized}, and a left $P$-coaction as \eqref{P-coaction-quantized}, so that the usual SAYD compatibilities are satisfied. Then, similarly above, the semi-classical limit $V_0$ of $V$ happens to be the SAYD module over the (Poisson, or co-Poisson) Hopf algebra $P_0$ through \eqref{P-action-semiclassical} and \eqref{P-coaction-semiclassical}.

Finally, we have the Hopf-cyclic complexes associated to the quantized Hopf algebras and SAYD modules over them, though in the presence of the topological tensor products and the $\B{C}[[h]]$-linear (continuous) maps.\footnote[1]{We refer the reader to \cite{RangSutl-V} for further details on Hopf-cyclic cohomology for topological Hopf algebras.} More precisely, $M$ being a right/left SAYD module over $U_h(\G{g})$, we dualize the (relative) Hopf-cyclic complex $C(U_h(\G{g}), U_h(\G{k}),M)$ in the sense of Subsection \ref{subsect-mod-coalg-cohomol} to get a cyclic complex (with coefficients in the )
\[
C(\C{O}_h(G/K),M^\vee) = \bigoplus_{n\geq 0} C_n(\C{O}_h(G/K),M^\vee), \qquad C_n(\C{O}_h(G/K),M^\vee) := M^\vee \square_{\C{O}(G)} \,\C{O}_h(G/K)^{\widehat{\ot}\, n+1}
\]
whose face, degeneracy, and cyclic operators are the same as \eqref{face-classical-Hopf}, \eqref{degeneracy-classical-Hopf}, and \eqref{cyclic-classical-Hopf} respectively.

We shall denote the cyclic (resp. periodic cyclic) homology of this complex by $HC_\ast(\C{O}_h(G/K),M^\vee )$ (resp. $HP_\ast(\C{O}_h(G/K),M^\vee )$), and call it the \emph{(periodic) Hopf-cyclic homology of the $\C{O}_h(G)$-comodule algebra $\C{O}_h(G/K)$, with coefficients in the SAYD module $M^\vee $ over $\C{O}_h(G)$}.

Along the lines of Subsection \ref{subsect-coord-alg-funct}, viewing $M$ as a right/left SAYD contra-module over $\C{O}_h(G)$, we then also have the complex
\begin{align*}
& C(\C{O}_h(G/K),M) = \bigoplus_{n\geq 0} C^n(\C{O}_h(G/K),M), \\
& C^n(\C{O}_h(G/K),M) := \Hom(M^\vee \, \square_{\C{O}_h(G)}\, \C{O}_h(G/K)^{\widehat{\ot}\,n+1},\B{C}) \cong M\widehat{\ot}_{U_h(\G{g})} \Hom(\C{O}_h(G/K)^{\widehat{\ot}\,n+1}, \B{C})
\end{align*}
whose cofaces, codegeneracies, and the cyclic operator are given just as \eqref{coface-classical-Hopf}, \eqref{codegeneracy-classical-Hopf}, and \eqref{cocyclic-classical-Hopf} respectively.

We shall denote the cyclic (resp. periodic cyclic) homology of this cocyclic module by $HC^\ast(\C{O}(G/K),M)$ (resp. $HP^\ast(\C{O}(G/K),M)$), and call it the \emph{(periodic) Hopf-cyclic cohomology of the $\C{O}(G)$-comodule algebra $\C{O}(G/K)$, with coefficients in the SAYD contra-module $M$ over $\C{O}(G)$}.

We are now ready for the van Est isomorphisms on the level of quantized Hopf algebras. This time, we begin with the homological one.

\begin{theorem}\label{vanEst-I-cohom}
Let $G$ be a Poisson-Lie group, with the quantized function algebra $\C{O}_h(G)$, and the Lie (bi)algebra $\G{g}$. Let also $K\subseteq G$ be a maximal compact subgroup, the Lie algebra of which being $\G{k}$. Furthermore, let $M$ be a right/left SAYD module over $U_h(\G{g})$, so that the $\G{g}$-action may be integrated into a $G$-action. Then,
\[
HC^\ast(U_h(\G{g}),U_h(\G{k}),M) \cong HC^\ast(\C{O}_h(G/K),M).
\]
\end{theorem}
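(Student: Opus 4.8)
The plan is to exploit the $h$-adic filtration to reduce the statement to the classical van Est isomorphism of Proposition \ref{prop-vanEst-classical-II}, which has already been established. Since $\C{O}_h(G)$ and $U_h(\G{g})$ are topological Hopf algebras over $k[[h]]$, and $M$ is a $k[[h]]$-module, each of the two Hopf-cyclic cochain complexes carries the natural $h$-adic filtration by the submodules $h^p(-)$. The key structural fact is that taking the associated graded with respect to this filtration, or equivalently tensoring with $k$ over $k[[h]]$ (passing to the semi-classical limit), commutes with forming the Hopf-cyclic cochain groups. First I would verify that each complex is $h$-complete and $h$-separated, so that the $h$-adic spectral sequence of the filtered complex converges; here one uses the standing assumption from Subsection \ref{subsect-quantized-vanEst} that every $k[[h]]$-module $V$ splits as $V \cong V_0[[h]]$, which guarantees the filtration is well-behaved and the graded pieces are exactly copies of the semi-classical complexes.

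Second I would compute the $E_0$-page, that is, the associated graded complex. Since each graded quotient $h^pC/h^{p+1}C$ is isomorphic to the corresponding classical cochain group built from the semi-classical limits $\C{O}_h(G)/h\C{O}_h(G) \cong \C{O}(G)$, $U_h(\G{g})/hU_h(\G{g}) \cong U(\G{g})$, and $M_0 = M/hM$, I would check that the induced differentials on the associated graded coincide with the classical Hopf-cyclic differentials described in Section \ref{sect-vanEst-for-classical-Hopf}. Concretely, all the structure maps (the cofaces $d_i$, codegeneracies $s_j$, and cyclic operator $t_n$) are $k[[h]]$-linear and continuous, so they descend to the graded quotients, and modulo $h$ they become precisely the classical operators, because the semi-classical limits of the coproduct, antipode, action and coaction reduce to the classical ones. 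This produces, on the level of the $E_1$-terms of the two associated spectral sequences, exactly the two complexes whose cohomologies were identified in Proposition \ref{prop-vanEst-classical-II}.

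Third, I would invoke Proposition \ref{prop-vanEst-classical-II} to obtain an isomorphism
\[
HC^\ast(U(\G{g}),U(\G{k}),M_0) \cong HC^\ast(\C{O}(G),M_0)
\]
on the semi-classical limits, and I would argue that this isomorphism is induced by a chain-level comparison map between the two $h$-adic complexes that is $k[[h]]$-linear. Because the induced map on associated graded objects is an isomorphism and both filtered complexes are $h$-complete and $h$-separated, the comparison map itself is an isomorphism by the standard $h$-adic completeness argument (a filtered morphism inducing an isomorphism on associated gradeds of complete separated filtered modules is itself an isomorphism). Passing to cohomology then yields the desired $HC^\ast(U_h(\G{g}),U_h(\G{k}),M) \cong HC^\ast(\C{O}_h(G),M)$.

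The main obstacle I anticipate is the convergence and comparison step: one must ensure that the van Est isomorphism on the semi-classical limit can be promoted to a genuine $k[[h]]$-linear chain map between the $h$-adic complexes, rather than merely an abstract isomorphism on each graded piece. In general an isomorphism on associated gradeds need not lift, so the crux is to produce an explicit $h$-adically continuous comparison at the chain (or at least $E_1$) level and then to verify that the $h$-adic filtration is exhaustive and Hausdorff so that the filtered isomorphism criterion applies. The assumption that the $\G{g}$-action integrates to a $G$-action, together with the uniqueness of the quantization $U_h(\G{g})$ of $U(\G{g})$ recalled in Subsection \ref{subsect-quaintized-Hopf-alg}, is what makes this lift possible, and I would treat the careful bookkeeping of continuity and completeness as the technical heart of the argument.
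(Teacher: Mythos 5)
Your overall strategy --- filter both complexes $h$-adically, identify the associated graded pieces with the classical Hopf-cyclic complexes of $U(\G{g})$ and $\C{O}(G)$ with coefficients in the semi-classical limit $M_0=M/hM$, and invoke the classical van Est isomorphism at the $E_1$-level --- is exactly the paper's proof. Your first two steps (the filtration $F_p=h^p(-)$, the identification of the $E_0$-terms with $h^iC^{i+j}(U(\G{g}),U(\G{k}),M_0)$ and $h^iC^{i+j}(\C{O}(G),M_0)$, and the observation that $d_0$ is the semi-classical Hopf-cyclic differential) reproduce the paper's argument almost verbatim.

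The divergence, and the gap, lies in your final step. You demand a $k[[h]]$-linear chain-level comparison map between the two $h$-adic complexes lifting the classical isomorphism, you correctly note that an isomorphism of associated gradeds need not lift, and you then assert that integrability of the $\G{g}$-action together with uniqueness of the quantization $U_h(\G{g})$ ``makes this lift possible.'' That claim is unsupported and, as stated, not repairable: uniqueness of the quantization of $U(\G{g})$ produces no map relating the relative complex of $(U_h(\G{g}),U_h(\G{k}))$ to the complex of $\C{O}_h(G)$. Worse, there is nothing at the chain level to quantize: already in the classical case the isomorphism \eqref{isom-E_1} of Proposition \ref{prop-vanEst-classical-II} is not realized by a chain map between the two Hopf-cyclic complexes --- it is itself obtained only on the $E_1$-pages of the spectral sequences associated with the filtration of $M$ from \cite[Lemma 6.2]{JaraStef06}, via the van Est map of \cite{Dupo76}. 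The paper sidesteps the lifting problem entirely: it never constructs a filtered chain map. Instead it observes that, although the $h$-adic filtrations are unbounded, they are weakly convergent in the sense of \cite[Def. 3.1]{McCleary-book} --- this is where the finiteness of the Hochschild cohomology along the columns of the associated bicomplexes enters --- so that \cite[Thm. 3.2]{McCleary-book} applies and the isomorphism \eqref{isom-E_1} of $E_1$-terms propagates to the cyclic cohomologies. In short, your steps one through three already constitute the paper's proof once you replace your completeness-and-lifting argument with the convergence theorem for weakly convergent filtered complexes; the chain-level lift you flag as ``the technical heart'' is a detour that cannot be carried out and is not needed.
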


\begin{proof}
Let us consider the decreasing filtrations on both complexes through $h$, that is,
\[
F_p C^\ast(U_h(\G{g}),U_h(\G{k}),M) := h^p C^\ast(U_h(\G{g}),U_h(\G{k}),M), \qquad p\geq 0,
\]
with $F_p C^\ast(U_h(\G{g}),U_h(\G{k}),M) := 0$ for $p<0$, and
\[
F_p C^\ast(\C{O}_h(G/K),M) := h^p C^\ast(\C{O}_h(G/K),M), \qquad p\geq 0,
\]
with $F_p C^\ast(\C{O}_h(G/K),M) := 0$ for $p<0$. 

It is evident by the $\B{C}[[h]]$-linearity of the (total) differential maps that both  $C^\ast(U_h(\G{g}),U_h(\G{k}),M)$ and $C^\ast(\C{O}_h(G/K),M)$ becomes filtered complexes through these filtrations.

On the other hand, both filtrations are clearly not (necessarily) bounded. Nevertheless, they both are \emph{weakly convergent} in the sense of \cite[Def. 3.1]{McCleary-book}, that is,  
\[
Z^{i,j}_\infty = \cap_r\,Z_r^{i,j},
\]
where, referring the differential maps simply as $d:C^n\to C^{n+1}$, here $Z_r^{i,j}:= F^iC^{i+j} \cap d^{-1} (F^{i+r}C^{i+j+1})$, and $Z_\infty^{i,j}:=F^iC^{i+j} \cap \ker(d)$. This, more precisely, follows from the finiteness (of the Hochschild cohomology classes) on the columns of the associated bicomplexes 
\[
E_0^{i,j}(U_h(\G{g}),U_h(\G{k}),M) := \frac{F^iC^{i+j}(U_h(\G{g}),U_h(\G{k}),M)}{F^{i+1}C^{i+j}(U_h(\G{g}),U_h(\G{k}),M)} \cong h^iC^{j+i}(U(\G{g}),U(\G{k}),M_0),
\]
and
\[
E_0^{i,j}(\C{O}_h(G/K),M) := \frac{F^iC^{i+j}(\C{O}_h(G/K),M)}{F^{i+1}C^{i+j}(\C{O}_h(G/K),M)} \cong h^iC^{j+i}(\C{O}(G/K),M_0).
\]
As a result of \cite[Thm. 3.2]{McCleary-book}, the corresponding spectral sequences converge in the level of Hochschild cohomology, and hence in the level of the cyclic cohomology.

Furthermore, the induced maps $d_0:E^{i,j}_0\to E^{i,j+1}_0$ correspond to the (total) Hopf-cyclic differential maps on the semi-classical limits of the individual complexes. Finally, an isomorphism on the level of $E_1$-terms is given by \eqref{isom-E_1}.
\end{proof}

The cohomological counterpart of the van Est isomorphism on the quantized Hopf algebras, whose proof is omitted due to its similarity to Proposition \ref{vanEst-I-cohom}, is given below.

\begin{theorem}\label{vanEst-II-cohom}
Let $G$ be a Poisson-Lie group, with the quantized function algebra $\C{O}_h(G)$, and the Lie (bi)algebra $\G{g}$. Let also $K\subseteq G$ be a maximal compact subgroup, the Lie algebra of which being $\G{k}$. Furthermore, let $M$ be a right/left SAYD module over $U_h(\G{g})$, so that the $\G{g}$-action may be integrated into a $G$-action, and let $M^\vee  = \Hom_{\B{C}[[h]]}(M,\B{C}[[h]])$ be the corresponding left/right SAYD contra-module. Then,
\[
HC_\ast(U_h(\G{g}),U_h(\G{k}),M^\vee ) \cong HC_\ast(\C{O}_h(G/K),M^\vee ).
\]
\end{theorem}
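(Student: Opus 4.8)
The plan is to mirror the argument of Theorem \ref{vanEst-I-cohom}, transposed to the homological setting with SAYD contra-module coefficients. First I would equip both homological complexes with the decreasing $h$-adic filtrations
\[
F^p C_\ast(U_h(\G{g}),U_h(\G{k}),M^\vee) := h^p C_\ast(U_h(\G{g}),U_h(\G{k}),M^\vee), \qquad p \geq 0,
\]
\[
F^p C_\ast(\C{O}_h(G),M^\vee) := h^p C_\ast(\C{O}_h(G),M^\vee), \qquad p \geq 0,
\]
both vanishing for $p<0$. Since every (total) boundary map — including the contra-module structure map $\a$, which is $k[[h]]$-linear by construction — commutes with multiplication by $h$, these are genuine filtered complexes, exactly as in the module case.

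Next I would identify the $E^0$-terms of the associated spectral sequences with the semi-classical limit complexes. Writing $M^\vee = \Hom_{k[[h]]}(M,k[[h]])$ and using $M \cong M_0[[h]]$, one has $M^\vee \cong M_0^\vee[[h]]$ with $M_0^\vee := \Hom_k(M_0,k)$ the corresponding classical SAYD contra-module over $U(\G{g})$ (respectively SAYD module over $\C{O}(G)$). Consequently
\[
E^0_{i,j}(U_h(\G{g}),U_h(\G{k}),M^\vee) \cong h^i C_{i+j}(U(\G{g}),U(\G{k}),M_0^\vee),
\]
\[
E^0_{i,j}(\C{O}_h(G),M^\vee) \cong h^i C_{i+j}(\C{O}(G),M_0^\vee),
\]
and the differentials $d^0$ induced on the associated graded pieces are precisely the Hopf-cyclic boundary maps of the two semi-classical complexes.

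As in the cohomological case the two filtrations are unbounded, so I would invoke weak convergence in the sense of \cite[Def. 3.1]{McCleary-book}, which here again follows from the finiteness of the Hochschild homology classes along the columns of the associated bicomplexes; \cite[Thm. 3.2]{McCleary-book} then guarantees that both spectral sequences converge at the level of Hochschild, and hence cyclic, homology. Finally, the required isomorphism on $E^1$-terms is supplied by the classical homological van Est isomorphism \eqref{isom-E_1-cohom} of Proposition \ref{prop-vanEst-classical-I}, applied to the semi-classical coefficient $M_0^\vee$.

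The step I expect to be the main obstacle is controlling the contra-module coefficient under the filtration. Unlike the module case, the contra-module structure enters the boundary and cyclic operators through the map $\a$ rather than through a coaction, so I would verify explicitly that $\a$ is $h$-adically filtered, i.e. $\a(h^p f) = h^p \a(f)$, so that it descends on each associated graded page to the structure map of the classical contra-module $M_0^\vee$, and that passing to the semi-classical limit $M^\vee/hM^\vee$ commutes with $\Hom_{k[[h]]}(-,k[[h]])$ in the relevant degrees. Granting this compatibility, the remainder of the argument is formally identical to the proof of Theorem \ref{vanEst-I-cohom}.
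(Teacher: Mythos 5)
Your proposal is correct and is essentially the paper's own argument: the paper omits the proof of Theorem \ref{vanEst-II-cohom} precisely because it is the same $h$-adic filtration and weak-convergence argument as Theorem \ref{vanEst-I-cohom}, with the $E_1$-isomorphism now supplied by the homological classical van Est isomorphism \eqref{isom-E_1-cohom} of Proposition \ref{prop-vanEst-classical-I}, exactly as you describe. Your added verification that the contra-module structure map $\a$ is $k[[h]]$-linear (hence $h$-adically filtered) and that $M^\vee \cong M_0^\vee[[h]]$, so that dualization commutes with passing to the semi-classical limit, is a sensible explicit check of a point the paper leaves implicit.
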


\section{The van Est isomorphism on quantum groups}\label{sect-q-adic-vanEst}

In this final section we shall prove the $q$-adic counterparts of the Hopf-cyclic (homology and cohomology) van Est isomorphisms considered in the previous section.

\subsection{Drinfeld-Jimbo algebras}\label{subsect-Drinfeld-Jimbo}~

Let us recall from \cite[Subsect. 6.1.2]{KlimSchm-book} the quantum enveloping algebras (Drinfeld-Jimbo algebras) of Lie algebras.

To this end, let $\G{g}$ be a finite dimensional semi-simple complex Lie algebra, and let $\a_1,\ldots,\a_\ell$ be an ordered sequence of simple roots. Let also $A=[a_{ij}]$ be the Cartan matrix associated to $\G{g}$, and let $q$ be a fixed nonzero complex number such that $q_i^2\neq 1$, where $q_i:= q^{d_i}$, $1\leq i\leq \ell$, and $d_i=(\a_i,\a_i)/2$.

The algebra $U_q(\G{g})$ is defined to be the Hopf algebra with $4\ell$ generators $E_i,F_i,K_i,K_i^{-1}$, $1\leq i\leq \ell$, subject to the relations
\begin{align*}
& K_iK_j = K_jK_i,\qquad K_iK_i^{-1}=K_i^{-1}K_i=1,\\
& K_iE_jK_i^{-1}=q_i^{a_{ij}}E_j,\qquad K_iF_jK_i^{-1}=q_i^{-a_{ij}}F_j,\\
& E_iF_j-F_jE_i=\d_{ij}\frac{K_i-K_i^{-1}}{q_i-q_i^{-1}},\\
& \sum_{r=0}^{1-a_{ij}}(-1)^r\left[\begin{array}{c}
                                              1-a_{ij} \\
                                              r
                                            \end{array}
\right]_{q_i}E_i^{1-a_{ij}-r}E_jE_i^r=0, \quad i\neq j,\\
& \sum_{r=0}^{1-a_{ij}}(-1)^r\left[\begin{array}{c}
                                              1-a_{ij} \\
                                              r
                                            \end{array}
\right]_{q_i}F_i^{1-a_{ij}-r}F_jF_i^r=0, \quad i\neq j,
\end{align*}
where
\begin{equation*}
\left[\begin{array}{c}
        n \\
        r
      \end{array}
\right]_q = \frac{(n)_q\,!}{(r)_q\,!\,\,(n-r)_q\,!},\qquad (n)_q:=\frac{q^n-q^{-n}}{q-q^{-1}}.
\end{equation*}
Furthermore, the algebra $U_q(\G{g})$ may be endowed with a Hopf algebra structure via
\begin{align*}
& \D(K_i)=K_i\ot K_i,\quad \D(K_i^{-1})=K_i^{-1}\ot K_i^{-1}, \\
& \D(E_i)=E_i\ot K_i + 1\ot E_i,\quad \D(F_j)=F_j\ot 1 + K_j^{-1}\ot F_j, \\
& \ve(K_i)=1,\quad \ve(E_i)=\ve(F_i)=0,\\
& S(K_i)=K_i^{-1},\quad S(E_i)=-E_iK_i^{-1},\quad S(F_i)=-K_iF_i.
\end{align*}
Although the construction is defined for semi-simple Lie algebras, it extends to other Lie algebras such as $g\ell_n$. We recall from \cite[Subsect. 6.1.2]{KlimSchm-book} that $U_q(g\ell_n)$ is the algebra generated by $E_i,F_i$, $1\leq i \leq n-1$, along with $K_j,K_j^{-1}$, $1\leq j \leq n$, subject to the relations
\begin{align*}
& K_iK_j = K_jK_i, \qquad K_iK_i^{-1} = K_i^{-1}K_i=1, \\
& K_iE_jK_i^{-1} = q^{\d_{i,j}-\d_{i,(j+1)}}E_j, \qquad K_iF_jK_i^{-1} = q^{-\d_{i,j}+\d_{i,(j+1)}}F_j,\\
& E_iF_j-F_jE_i =\d_{ij}\frac{K_iK_{i+1}^{-1} - K_i^{-1}K_{i+1}}{q-q^{-1}}, \\
& E_iE_j = E_jE_i, \qquad F_iF_j = F_jF_i, \qquad |i-j|\leq 2 \\
& E_i^2E_{i\pm1} - (q+q^{-1})E_iE_{i\pm1}E_i + E_{i\pm1}E_i^2 = 0, \\
& F_i^2F_{i\pm1} - (q+q^{-1})F_iF_{i\pm1}F_i + F_{i\pm1}F_i^2 = 0.
\end{align*}
The Hopf algebra structure on $U_q(g\ell_n)$ is given by
\begin{align*}
& \D(K_i) = K_i\ot K_i, \qquad \D(K_i^{-1}) = K_i^{-1}\ot K_i^{-1}, \\
& \D(E_i) = E_i \ot K_iK_{i+1}^{-1} + 1 \ot E_i, \qquad \D(F_i)=F_i \ot 1 + K_i^{-1}K_{i+1} \ot F_i, \\
& \ve(K_i) = 1, \qquad \ve(E_i) = 0 = \ve(F_i), \\
& S(K_i) = K_i^{-1}, \qquad S(E_i) = -E_iK_i^{-1}K_{i+1}, \qquad S(F_i) = -K_iK_{i+1}^{-1}F_i.
\end{align*}
It then happens that, as was remarked in \cite[Subsect. 6.1.2]{KlimSchm-book}, $U_q(s\ell_n)$ is isomorphic as to the Hopf subalgebra of $U_q(g\ell_n)$ generated by $E_i, F_i, \C{K}_i:=K_iK^{-1}_{i+1}$ for $1 \leq i \leq n-1$. 

Explicitly, $U_q(s\ell_n)$ is the algebra generated by $E_i, F_i, \C{K}_i$, with $1 \leq i \leq n-1$, subject to the relations
\begin{align*}
& \C{K}_i\C{K}_j = \C{K}_j\C{K}_i, \qquad \C{K}_i\C{K}_i^{-1} = \C{K}_i^{-1}\C{K}_i=1, \\
& \C{K}_iE_j\C{K}_i^{-1} = q^{2\d_{i,j}-\d_{(i+1),j}-\d_{i,(j+1)}}E_j, \qquad \C{K}_iF_j\C{K}_i^{-1} = q^{-2\d_{i,j}+\d_{(i+1),j}+\d_{i,(j+1)}}F_j,\\
& E_iF_j-F_jE_i =\d_{ij}\frac{\C{K}_i - \C{K}_i^{-1}}{q-q^{-1}}, \\
& E_iE_j = E_jE_i, \qquad F_iF_j = F_jF_i, \qquad |i-j|\leq 2 \\
& E_i^2E_{i\pm1} - (q+q^{-1})E_iE_{i\pm1}E_i + E_{i\pm1}E_i^2 = 0, \\
& F_i^2F_{i\pm1} - (q+q^{-1})F_iF_{i\pm1}F_i + F_{i\pm1}F_i^2 = 0.
\end{align*}
The Hopf algebra structure of $U_q(s\ell_n)$, accordingly, is given by
\begin{align*}
& \D(\C{K}_i) = \C{K}_i\ot \C{K}_i, \qquad \D(\C{K}_i^{-1}) = \C{K}_i^{-1}\ot \C{K}_i^{-1}, \\
& \D(E_i) = E_i \ot \C{K}_i + 1 \ot E_i, \qquad \D(F_i)=F_i \ot 1 + \C{K}_i^{-1} \ot F_i, \\
& \ve(\C{K}_i) = 1, \qquad \ve(E_i) = 0 = \ve(F_i), \\
& S(\C{K}_i) = \C{K}_i^{-1}, \qquad S(E_i) = -E_i\C{K}_i^{-1}, \qquad S(F_i) = -\C{K}_iF_i.
\end{align*}
On the other extreme, there are the extended Drinfeld-Jimbo algebras. For instance, the quotient of $U_q(g\ell_n)$ by the Hopf ideal generated by $K_1K_2\ldots K_n - 1 \in U_q(g\ell_n)$ is called the \emph{extended Drinfeld-Jimbo algebra} of $s\ell_n$, and is denoted by $U_q^{\text{ext}}(s\ell_n)$. More precisely, $U_q^{\text{ext}}(s\ell_n)$ is the algebra generated by $E_i,F_i$, $1\leq i \leq n-1$, and $\hat{K}_j,\hat{K}_j^{-1}$, $1\leq j \leq n$, subject to
\begin{align*}
& \hat{K}_i\hat{K}_j = \hat{K}_j\hat{K}_i, \qquad \hat{K}_i\hat{K}_i^{-1} = \hat{K}_i^{-1}\hat{K}_i=1, \qquad \hat{K}_1\hat{K}_2\ldots \hat{K}_n = 1, \\
& \hat{K}_iE_{i-1}\hat{K}_i^{-1} = q^{-1}E_{i-1}, \qquad \hat{K}_iE_i\hat{K}_i^{-1} = qE_i, \qquad \hat{K}_iE_j\hat{K}_i^{-1} = E_j, \qquad j\neq i,\,i-1,\\
& \hat{K}_iF_{i-1}\hat{K}_i^{-1} = qF_{i-1}, \qquad \hat{K}_iF_i\hat{K}_i^{-1} = q^{-1}F_i, \qquad \hat{K}_iF_j\hat{K}_i^{-1} = F_j, \qquad j\neq i,\,i-1,\\
& E_iF_j-F_jE_i =\d_{ij}\frac{\hat{K}_i\hat{K}_{i+1}^{-1} - \hat{K}_i^{-1}\hat{K}_{i+1}}{q-q^{-1}}, \\
& E_iE_j = E_jE_i, \qquad F_iF_j = F_jF_i, \qquad |i-j|\leq 2 \\
& E_i^2E_{i\pm1} - (q+q^{-1})E_iE_{i\pm1}E_i + E_{i\pm1}E_i^2 = 0, \\
& F_i^2F_{i\pm1} - (q+q^{-1})F_iF_{i\pm1}F_i + F_{i\pm1}F_i^2 = 0.
\end{align*}
Finally, the Hopf algebra structure on $U_q^{\text{ext}}(s\ell_n)$ is given by
\begin{align*}
& \D(\hat{K}_i) = \hat{K}_i\ot \hat{K}_i, \qquad \D(\hat{K}_i^{-1}) = \hat{K}_i^{-1}\ot \hat{K}_i^{-1}, \\
& \D(E_i) = E_i \ot \hat{K}_i\hat{K}_{i+1}^{-1} + 1 \ot E_i, \qquad \D(F_i)=F_i \ot 1 + \hat{K}_i^{-1}\hat{K}_{i+1} \ot F_i, \\
& \ve(\hat{K}_i) = 1, \qquad \ve(E_i) = 0 = \ve(F_i), \\
& S(\hat{K}_i) = \hat{K}_i^{-1}, \qquad S(E_i) = -E_i\hat{K}_i^{-1}\hat{K}_{i+1}, \qquad S(F_i) = -\hat{K}_i\hat{K}_{i+1}^{-1}F_i.
\end{align*}
Let us remark also that the quantized enveloping algebras $U_q(s\ell_n)$ is a Hopf subalgebra of $U_q^{\text{ext}}(s\ell_n)$, see for instance \cite[Subsect. 8.5.3]{KlimSchm-book}. 

As a last note in this subsection, let us note also that in accordance with the real forms of complex Lie algebras, the Drinfeld-Jimbo algebras admit real forms. Along the lines of \cite[Subsect. 6.1.7]{KlimSchm-book}, in the case $q\in \B{R}$, the \emph{compact real form} of $U_q(s\ell_n)$ is the Hopf $\ast$-algebra denoted by $U_q(su_n)$, with the same generators and relations as those of $U_q(s\ell_n)$ as a Hopf algebra, whose $\ast$-structure is given by
\[
\C{K}_i^\ast = \C{K}_i, \qquad E_i^\ast = \C{K}_iF_i, \qquad F_i^\ast = E_i\C{K}^{-1}_i.
\]

\subsection{The coordinate algebras of quantum groups}\label{subsect-coord-alg-quant-gr}~

Following the notation of \cite[Sect. 9]{KlimSchm-book}, we shall denote by $\C{O}_q(G)$ the coordinate algebra of the quantum group $G_q$.

By \cite[Thm. 9.18]{KlimSchm-book} there are (unique, and by \cite[Corollary 11.23]{KlimSchm-book} nondegenerate) Hopf pairings between $U_q(g\ell_n)$ and $\C{O}_q(GL(n))$, and, $U_q^{\text{ext}}(s\ell_n)$ and $\C{O}_q(SL(n))$ - as well as the pairings between $U_{q^{1/2}}(so_{2n+1})$ and $\C{O}_q(SO(2n+1))$, $U_q^{\text{ext}}(so_{2n})$ and $\C{O}_q(SO(2n))$, and finally $U_q^{\text{ext}}(sp_{2n})$ and $\C{O}_q(Sp(2n))$.

We shall, by a slight abuse of notation, address each of these pairings as a pairing between $U_q(\G{g})$ and $\C{O}_q(G)$. Let now $K \subseteq G$ stands for a maximal compact subgroup, with Lie algebra $\G{k}$. Once again, in accordance with the previous subsections, leaning on this duality we introduce $\C{O}_q(G/K)$ as the subalgebra of $\C{O}_q(G)$ dual to the quotient coalgebra $\C{C}_q:=U_q(\G{g}) \ot_{U_q(\G{k})} \B{C}$, which will stand for either of the coalgebras $U_q(g\ell_n) \ot_{U_q(u_n)} \B{C}$, $U^{\rm ext}_q(so_{2n}) \ot_{U_q(u_n)} \B{C}$, or $U^{\rm ext}_q(sp_{2n}) \ot_{U_q(u_n)} \B{C}$.

In case $G:= SL(2)$ we shall consider the subalgebra $\C{O}_q(SU(2))$ of $\C{O}_q(SL(2))$, given in \cite[Sect. 1]{PodlWoro90}, which is the $\ast$-algebra with generators $\a,\b,\g,\d$, subject to the relations
\begin{align*}
& \a\b = q\b\a, \qquad \a\g=q\g\a, \qquad \b\g=\g\b, \qquad \b\d=q\d\b, \qquad\g\d=q\d\g, \\
& \a\d - q\b\g = 1, \qquad \d\a - q^{-1}\b\g = 1,\\
& \b\a^\ast = q^{-1}\a^\ast\b + q^{-1}(1-q^2)\g^\ast\d, \qquad \g\a^\ast = q\a^\ast\g, \qquad \d\a^\ast = \a^\ast\d, \\
& \g\b^\ast = \b^\ast \g, \qquad \d\b^\ast = q\b^\ast \d - q(1-q^2)\a^\ast\g.
\end{align*}

\subsection{The quantum van Est isomorphism}\label{subsect-quant-vanEst-iso}~

Now, $M$ being a right/left SAYD module over $U_q(\G{g})$, let us consider the (relative) Hopf-cyclic complex $C(U_q(\G{g}),U_q(\G{k}),M)$ of the $U_q(\G{g})$-module coalgebra $\C{C}_q:=U_q(\G{g}) \ot_{U_q(\G{k})} \B{C}$. 

The duality arguments of Subsection \ref{subsect-coord-alg-funct} hold verbatim to conclude that $\C{C}_q:=U_q(\G{g}) \ot_{U_q(\G{k})} \B{C}$ being a (left) $U_q(\G{g})$-module coalgebra, $\C{O}_q(G/K)\subseteq \C{O}_q(G)$ is a (left) $\C{O}_q(G)$-comodule algebra. 

Moreover, as was noted in Proposition \ref{prop-SAYD-dual}, $M$ being right/left SAYD module over $U_q(\G{g})$, $M^\vee$ happens to be a left/right SAYD module over $\C{O}(G)$.

Accordingly, dualizing $C^n(U_q(\G{g}),U_q(\G{k}),M):=M\ot_{U_q(\G{g})}\C{C}_q^{\ot\,n+1}$'s, to the left $\C{O}_q(G)$-comodule algebra $\C{O}_q(G/K)$ and a left/right SAYD module $M^\vee $ over $\C{O}_q(G)$, we may associate the cyclic module
\begin{align*}
&C(\C{O}_q(G/K),M^\vee ) = \bigoplus_{n\geq 0} C_n(\C{O}_q(G/K),M^\vee ), \\
& \hspace{5cm} C_n(\C{O}_q(G/K),M^\vee ) := M^\vee \, \square_{\C{O}(G_q)}\, \C{O}_q(G/K)^{\ot\,n+1} 
\end{align*}
with the face, degeneracy, and the cyclic operators as in \eqref{face-classical-Hopf}, \eqref{degeneracy-classical-Hopf}, \eqref{cyclic-classical-Hopf} respectively. 

We denote the cyclic (resp. periodic cyclic) homology of this cyclic module by $HC_\ast(\C{O}_q(G/K),M^\vee )$ (resp. $HP_\ast(\C{O}_q(G/K),M^\vee )$), and call it the cyclic (resp. periodic cyclic) homology of the $\C{O}_q(G)$-comodule algebra $\C{O}_q(G/K)$, with coefficients in the  SAYD module $M^\vee $ over $\C{O}_q(G)$.

On the other extreme, regarding $M$ as a right/left SAYD contra-module $\C{O}_q(G)$, we have a cocyclic module 
\begin{align*}
&C(\C{O}_q(G/K),M) = \bigoplus_{n\geq 0} C^n(\C{O}_q(G/K),M), \\
& \hspace{5cm} C^n(\C{O}_q(G/K),M) := M\ot_{U_q(\G{g})} \Hom(\C{O}_q(G/K)^{\ot\,n+1}, \B{C})
\end{align*}
with the cofaces, codegeneracies, and the cyclic operator as in \eqref{coface-classical-Hopf}, \eqref{codegeneracy-classical-Hopf}, and \eqref{cocyclic-classical-Hopf}.

We shall denote the cyclic (resp. periodic cyclic) homology of this complex by $HC^\ast(\C{O}_q(G/K),M)$ (resp. $HP^\ast(\C{O}_q(G/K),M)$), and we shall refer to it as the \emph{(periodic) Hopf-cyclic cohomology of the $\C{O}_q(G)$-comodule algebra $\C{O}_q(G/K)$, with coefficients in the SAYD contra-module $M$ over $\C{O}_q(G)$}.

Next, along the lines of \cite[Prop. 3.2]{KhalRang05}, we may set up a map 
\begin{equation}\label{q-van-est}
\psi:C^n_{U_q(\G{g})}(\C{C}_q,M) \to C^n(\C{O}_q(G/K),M)
\end{equation}
via
\begin{align*}
& \psi:M \ot_{U_q(\G{g})} \C{C}_q^{\ot\,(n+1)} \to M\ot_{U_q(\G{g})} \Hom(\C{O}_q(G/K)^{\ot\,n+1}, k) \cong \Hom(M^\vee \,\square_{\C{O}_q(G)}\,\C{O}_q(G/K)^{\ot\,n+1}, \B{C}),\\
& m\ot_{U_q(\G{g})} c^0\odots c^n \mapsto m\ot_{U_q(\G{g})} \tilde{c},
\end{align*}
where for any $\tilde{c}:= c^0\odots c^n$ and any $a_0\odots a_n \in\C{O}_q(G/K)^{\ot\,n+1}$,
\[
\langle m\ot_{U_q(\G{g})} \tilde{c}, a_0\odots a_n\rangle := m\langle a_0,c^0\rangle\ldots \langle a_n,c^n\rangle.
\]

\begin{theorem}\label{thm-q-adic-vanEst-I}
Given any right/left SAYD module $M$ over $U_q(\G{g})$, the map \[ C(U_q(\G{g}),U_q(\G{k}),M) \to C(\C{O}_q(G/K), M) \] determined by \eqref{q-van-est} is an isomorphism of cocyclic modules. Therefore, there is a natural isomorphism $HC^*(U_q(\G{g}), U_q(\G{k}),M) \cong HC^*(\C{O}_q(G/K),M)$ of the corresponding cohomology groups.
\end{theorem}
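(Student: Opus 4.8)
The plan is to regard the arrow $\psi$ determined by \eqref{q-van-est} as a morphism of cocyclic modules and to prove two things separately: that it is a bijection in each cosimplicial degree, and that it commutes with the cofaces, codegeneracies and the cyclic operator. The isomorphism of cyclic cohomologies is then formal, since $HC^*$ is a functor of cocyclic modules. The source is the relative Hopf-cyclic complex $C^n_{U_q(\G{g})}(\C{C},M)=M\ot_{U_q(\G{g})}\C{C}^{\ot\,n+1}$ of the $U_q(\G{g})$-module coalgebra $\C{C}=U_q(\G{g})\ot_{U_q(\G{k})}k$, while the target is the complex $C^n(\C{O}_q(G/K),M)=M\ot_{U_q(\G{g})}\Hom(\C{O}_q(G/K)^{\ot\,n+1},k)$ of the $\C{O}_q(G)$-comodule algebra $\C{O}_q(G/K)$. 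The only structural input is the nondegenerate Hopf pairing $\langle\,,\,\rangle$ between $U_q(\G{g})$ and $\C{O}_q(G)$ which, as recorded above, descends to a nondegenerate pairing between $\C{C}$ and $\C{O}_q(G/K)$; the whole argument is the relative, contra-coefficient analogue of \cite[Prop. 3.2]{KhalRang05}.

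For the compatibility with the structure maps I would transport each operator across the pairing. Writing $\psi(m\ot c^0\odots c^n)=m\ot\tilde{c}$ with $\langle\tilde{c},a_0\odots a_n\rangle=\langle a_0,c^0\rangle\cdots\langle a_n,c^n\rangle$, the inner cofaces correspond to the Hopf-pairing identity $\langle a_i a_{i+1},c^i\rangle=\langle a_i,c^i\ps{1}\rangle\langle a_{i+1},c^i\ps{2}\rangle$, so that the comultiplication $c^i\mapsto c^i\ps{1}\ot c^i\ps{2}$ matches the multiplication $a_i a_{i+1}$, while dually the codegeneracies match the counit against insertion of the unit $1$. The last coface and the cyclic operator are governed by the adjunctions $\langle b\lt h,c\rangle=\langle b,h\cdot c\rangle$ and $\langle b^{\ps{-1}},h\rangle b^{\ps{0}}=b\lt h$, which say that the $U_q(\G{g})$-action on $\C{C}$ is dual to the left $\C{O}_q(G)$-coaction $a\mapsto a^{\ps{-1}}\ot a^{\ps{0}}$ on $\C{O}_q(G/K)$. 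Concretely, the source rotation $t_n(m\ot c^0\odots c^n)=m\ns{0}\ot c^1\odots c^n\ot(m\ns{-1}\cdot c^0)$ pairs with $a_0\odots a_n$ to give, after rewriting $\langle a_n,m\ns{-1}\cdot c^0\rangle=\langle a_n\lt m\ns{-1},c^0\rangle=\langle a_n^{\ps{-1}},m\ns{-1}\rangle\langle a_n^{\ps{0}},c^0\rangle$ and $m\ns{0}\langle a_n^{\ps{-1}},m\ns{-1}\rangle=m\lt a_n^{\ps{-1}}$, exactly the target value $(m\lt a_n^{\ps{-1}})\,\langle a_n^{\ps{0}},c^0\rangle\langle a_0,c^1\rangle\cdots\langle a_{n-1},c^n\rangle$. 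These verifications are lengthy but mechanical, parallel to the bookkeeping of Section \ref{sect-Hopf-cyclic-cohom}.

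The step I expect to be the main obstacle is the level-wise bijectivity, and in particular surjectivity: the assignment $c^0\odots c^n\mapsto\tilde{c}$ sends $\C{C}^{\ot\,n+1}$ into $\Hom(\C{O}_q(G/K)^{\ot\,n+1},k)$, and nondegeneracy alone yields only injectivity. To obtain a genuine isomorphism one must identify $\C{C}$ with the appropriate dual of $\C{O}_q(G/K)$ rather than merely a subspace of its full linear dual. The plan here is to use the Peter-Weyl-type decomposition of $\C{O}_q(G)$ into matrix coefficients of finite-dimensional $U_q(\G{g})$-modules: on each finite-dimensional isotypic block the Hopf pairing is perfect, so $\C{C}$ is identified with the restricted dual of $\C{O}_q(G/K)$ and, block by block, $\C{C}^{\ot\,n+1}$ with the restricted dual of $\C{O}_q(G/K)^{\ot\,n+1}$, which is the $\Hom$-space appearing in the target once it is read as this restricted dual. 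Granting this identification, the resulting $U_q(\G{g})$-linear isomorphism $\C{C}^{\ot\,n+1}\cong\Hom(\C{O}_q(G/K)^{\ot\,n+1},k)$ survives the functor $M\ot_{U_q(\G{g})}(-)$, so $\psi$ is bijective in each degree.

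Combining the two parts, $\psi$ is a degreewise bijection commuting with all cocyclic operators, hence an isomorphism of cocyclic modules; applying cyclic cohomology then yields the asserted natural isomorphism $HC^*(U_q(\G{g}),U_q(\G{k}),M)\cong HC^*(\C{O}_q(G/K),M)$.
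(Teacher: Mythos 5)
Your proposal is correct, and on the part that constitutes the entirety of the paper's own proof---the verification that $\psi$ commutes with the cofaces, codegeneracies and the cyclic operator---you follow exactly the same route: transport each operator across the Hopf pairing, with the inner cofaces and codegeneracies handled by multiplicativity/unitality of the pairing, and the last coface and the cyclic operator handled by the adjunctions $\langle b\lt h,c\rangle=\langle b,h\cdot c\rangle$ and $\langle b^{\ps{-1}},h\rangle\, b^{\ps{0}}=b\lt h$; your displayed computation for $t_n$ is essentially verbatim the one in the paper. Where you genuinely go beyond the paper is on level-wise bijectivity: the paper's proof says nothing about it, relying implicitly on the nondegeneracy of the pairing between $\C{C}=U_q(\G{g})\ot_{U_q(\G{k})}k$ and $\C{O}_q(G/K)$ recorded just before the theorem. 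You are right that nondegeneracy alone only gives injectivity of $\C{C}^{\ot\,n+1}\to\Hom(\C{O}_q(G/K)^{\ot\,n+1},k)$, and that surjectivity onto the \emph{full} linear dual fails whenever $\C{O}_q(G/K)$ is infinite dimensional, so the target must be interpreted as the restricted dual cut out by the pairing; your Peter--Weyl argument, identifying $\C{C}$ block by block with the restricted dual of $\C{O}_q(G/K)$ via matrix coefficients of finite-dimensional $U_q(\G{g})$-modules, supplies precisely the step the paper omits. What your approach buys is an actual proof of the isomorphism claim rather than only the morphism claim; what it costs is the extra input you flag yourself---the Peter--Weyl decomposition, its compatibility with passing to $K$-invariants, and the fact that the resulting identification survives $M\ot_{U_q(\G{g})}(-)$---which should be checked for the specific pairs ($U_q(g\ell_n)$ with $\C{O}_q(GL(n))$, $U_q^{\text{ext}}(s\ell_n)$ with $\C{O}_q(SL(n))$, etc.) for which the paper's Killing pairing is known to be nondegenerate.
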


\begin{proof}
Let us first present the commutation with the cofaces. For $0\leq i \leq n-1$,
\begin{align*}
& d_i(\psi(m\ot_{U_q(\G{g})} c^0\odots c^{n-1})) (a_0\odots a_n) = d_i(m\ot_{U_q(\G{g})} \tilde{c}) (a_0\odots a_n) = \\
& m\langle a_0,c^0\rangle\ldots \langle a_ia_{i+1},c^i\rangle\ldots \langle a_n,c^{n-1}\rangle =\\
& \psi(m\ot_{U_q(\G{g})} c^0\odots c^i\ps{1}\ot c^i\ps{2}\odots c^{n-1})(a_0\odots a_n) =\\
& \psi(d_i(m\ot_{U_q(\G{g})} c^0\odots c^{n-1})))(a_0\odots a_n),
\end{align*}
and for the last coface we have
\begin{align*}
& d_n(\psi(m\ot_{U_q(\G{g})} c^0\odots c^{n-1})) (a_0\odots a_n) = d_n(m\ot_{U_q(\G{g})} \tilde{c}) (a_0\odots a_n) = \\
& (m\lt a_n^{\ps{-1}}) \langle a_n^{\ps{0}}a_0,c^0\rangle\ldots \langle a_{n-1},c^{n-1}\rangle = \\
& \langle a_n^{\ps{-1}}, m\ns{-1}\rangle m\ns{0} \langle a_n^{\ps{0}},c^0\ps{1}\rangle\langle a_0,c^0\ps{2}\rangle\ldots \langle a_{n-1},c^{n-1}\rangle = \\
& m\ns{0} \langle a_n^{\ps{0}},c^0\ps{1}\rangle\langle a_0,c^0\ps{2}\rangle\ldots \langle a_{n-1},c^{n-1}\rangle \langle a_n, m\ns{-1}\cdot c^0\ps{2}\rangle = \\
& \psi(d_n(m\ot_{U_q(\G{g})} c^0\odots c^{n-1})))(a_0\odots a_n).
\end{align*}
As for the codegeneracies, for $0\leq j \leq n$ we have
\begin{align*}
& s_j(\psi(m\ot_{U_q(\G{g})} c^0\odots c^{n+1})) (a_0\odots a_n) =  \\
& s_j(m\ot_{U_q(\G{g})} \tilde{c}) (a_0\odots a_j \ot 1 \ot a_{j+1} \odots a_{n-1}) = \\
&m\langle a_0,c^0\rangle\ldots \langle a_j,c^j\rangle\langle 1,c^{j+1}\rangle\langle a_{j+1},c^{j+2}\rangle\ldots \langle a_n,c^{n+1}\rangle = \\
&m\langle a_0,c^0\rangle\ldots \langle a_j,c^j\rangle\ve(c^{j+1})\langle a_{j+1},c^{j+2}\rangle\ldots \langle a_n,c^{n+1}\rangle = \\ 
& \psi(s_j(m\ot_{U_q(\G{g})} c^0\odots c^{n+1})) (a_0\odots a_n).
\end{align*}
Finally, the commutation with the cyclic operator follows from
\begin{align*}
& t_n(\psi(m\ot_{U_q(\G{g})} c^0\odots c^n)) (a_0\odots a_n) =   ((m\lt a_n^{\ps{-1}})\ot_{U_q(\G{g})} \tilde{c}) (a_n^{\ps{0}} \ot a_0\odots a_{n-1}) = \\
& (m\lt a_n^{\ps{-1}})\langle a_n^{\ps{0}}, c^0\rangle \langle a_0,c^1 \rangle \ldots \langle a_{n-1},c^n \rangle =  \langle a_n^{\ps{-1}}, m\ns{-1} \rangle m\ns{0}\langle a_n^{\ps{0}}, c^0\rangle \langle a_0,c^1 \rangle \ldots \langle a_{n-1},c^n \rangle = \\
& m\ns{0}\langle a_n, m\ns{-1}\cdot c^0\rangle \langle a_0,c^1 \rangle \ldots \langle a_{n-1},c^n \rangle = \psi(t_n(m\ot_{U_q(\G{g})} c^0\odots c^n))(a_0\odots a_n).
\end{align*}
\end{proof}

Dually, we have
\begin{equation}\label{q-van-est-II}
\vp: C_n(\C{O}_q(G/K),M^\vee ) \to C_{n,U_q(\G{g})}(\C{C}_q,M^\vee )
\end{equation}
through
\begin{align*}
& \vp: M^\vee \square_{\C{O}_q(G)}\C{O}_q(G/K)^{\ot\,n+1} \to \Hom_{U_q(\G{g})} (\C{C}_q^{\ot\,(n+1)}, M^\vee ),\\
& f \square_{\C{O}(G_q)}\, a_0\odots a_n \mapsto f \square_{\C{O}_q(G)}\,\tilde{a},
\end{align*}
where $\tilde{a}:= a_0\odots a_n$ so that
\[
\langle f \square_{\C{O}(G_q)}\,\tilde{a}, c^0\odots c^n \rangle := f \langle a_0,c^0\rangle \ldots \langle a_n,c^n\rangle.
\]

\begin{theorem}\label{thm-q-adic-vanEst-II}
Given any right/left SAYD contramodule $M^\vee $ over $\C{O}_q(G)$, the map \[ C(\C{O}_q(G/K),M^\vee ) \to C(U_q(\G{g}),U_q(\G{k}), M^\vee ) \] determined by \eqref{q-van-est-II} is an isomorphism of cyclic modules. Therefore, there is a natural isomorphism $HC_*(U_q(\G{g}), U_q(\G{k}),M^\vee ) \cong HC_*(\C{O}_q(G/K),M^\vee )$ of the corresponding homology groups.
\end{theorem}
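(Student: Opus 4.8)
The plan is to deduce the statement from Theorem~\ref{thm-q-adic-vanEst-I} by a duality argument, identifying the map $\vp$ of \eqref{q-van-est-II} as the linear transpose of the cocyclic isomorphism $\psi$ of \eqref{q-van-est}. First I would record that, by the very construction of the contramodule complexes in Subsection~\ref{subsect-module-coalg-contra-coeff} and Subsection~\ref{subsect-contra-comod-alg}, the homological complex $C(\C{O}_q(G/K),M^\vee )$ is the linear dual of the cohomological complex $C(\C{O}_q(G/K),M)$ under the pairing
\[
\langle f\ot a_0\odots a_n,\; m\ot_{U_q(\G{g})}\phi\rangle := f(m)\,\phi(a_0\odots a_n),
\]
and, likewise, $C_{\ast,U_q(\G{g})}(\C{C},M^\vee )$ is the dual of $C^\ast_{U_q(\G{g})}(\C{C},M)$ under the evaluation pairing $\langle \eta,\, m\ot_{U_q(\G{g})} c^0\odots c^n\rangle := \eta(c^0\odots c^n)(m)$ for $\eta \in C_{n,U_q(\G{g})}(\C{C},M^\vee )$. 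The nondegeneracy needed here is inherited from the nondegeneracy of the Killing pairing between $U_q(\G{g})$ and $\C{O}_q(G)$, hence between $\C{C}$ and $\C{O}_q(G/K)$, recorded in Subsection~\ref{subsect-coord-alg-quant-gr} via \cite[Corollary 11.23]{KlimSchm-book}.

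The central computation is then the transpose identity
\[
\langle \vp(f\ot a_0\odots a_n),\; m\ot_{U_q(\G{g})} c^0\odots c^n\rangle = \langle f\ot a_0\odots a_n,\; \psi(m\ot_{U_q(\G{g})} c^0\odots c^n)\rangle .
\]
Unwinding the left-hand side through the definition of $\vp$ gives $(f\,\square\,\tilde a)(c^0\odots c^n)(m) = f(m)\,\langle a_0,c^0\rangle\cdots\langle a_n,c^n\rangle$, while the right-hand side, through the definition of $\psi$, gives $f(m)\,\langle a_0,c^0\rangle\cdots\langle a_n,c^n\rangle$ as well; so the identity collapses to the mere multiplicativity of the Killing pairing across tensor factors and needs no module- or comodule-bookkeeping. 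This exhibits $\vp$ as the honest transpose $\psi^{\ast}$ of $\psi$.

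Granting this, the conclusion is formal. In both the cohomological and the homological pictures the cyclic structure maps on the $M^\vee $-complexes were \emph{defined} as the transposes of the cocyclic structure maps on the $M$-complexes; consequently, transposing the identities $\psi\, d_i = d_i\,\psi$, $\psi\, s_j = s_j\,\psi$, $\psi\, t_n = t_n\,\psi$ established in the proof of Theorem~\ref{thm-q-adic-vanEst-I} yields $\d_i\,\vp = \vp\,\d_i$, $\s_j\,\vp = \vp\,\s_j$, and $\tau_n\,\vp = \vp\,\tau_n$. Thus $\vp$ is a morphism of cyclic modules; after identifying $C_{\ast,U_q(\G{g})}(\C{C},M^\vee )$ with $C(U_q(\G{g}),U_q(\G{k}),M^\vee )$ through the isomorphism $\Lambda$ of Subsection~\ref{subsect-module-coalg-contra-coeff}, this would give $HC_\ast(\C{O}_q(G/K),M^\vee ) \cong HC_\ast(U_q(\G{g}),U_q(\G{k}),M^\vee )$.

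The step I expect to be the main obstacle is the bijectivity of $\vp$ rather than its compatibility with the cyclic structure: nondegeneracy of the Killing pairing only guarantees that the transpose lands injectively in the full linear dual, so over the infinite-dimensional complexes at hand surjectivity is not automatic from a reflexivity argument. I would settle it concretely, by writing down the two-sided inverse of $\vp$ as the transpose of the explicit inverse of $\psi$ furnished by Theorem~\ref{thm-q-adic-vanEst-I} (equivalently, by $\Lambda^{-1}$ together with the nondegenerate pairing), and checking that the corresponding pairing formula manifestly restricts to the concrete spaces $M^\vee \,\square_{\C{O}_q(G)}\,\C{O}_q(G/K)^{\ot\,n+1}$ and $\Hom_{U_q(\G{g})}(\C{C}^{\ot\,(n+1)},M^\vee )$. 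Should this bookkeeping prove awkward, the safe fallback is a direct verification mirroring the proof of Theorem~\ref{thm-q-adic-vanEst-I}, in which the only nonroutine points are the commutation of $\vp$ with the twisted last face $\d_n$ and with $\tau_n$, both of which reduce, via the pairing, to transporting the coaction $a_n^{\ps{-1}}\rt f$ into the coproduct of $c^0$.
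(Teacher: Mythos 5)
Your proposal is correct, but it takes a genuinely different route from the paper. The paper proves Theorem \ref{thm-q-adic-vanEst-II} by exactly what you call your fallback: a direct verification, parallel to the proof of Theorem \ref{thm-q-adic-vanEst-I}, that $\vp$ commutes with the faces, degeneracies and cyclic operator, the only nonroutine points being $\d_n$ and $\tau_n$, where one moves $a_n$ across the pairing via $\langle a_n,m\ns{-1}\cdot c^0\rangle = \langle a_n^{\ps{-1}},m\ns{-1}\rangle\,\langle a_n^{\ps{0}},c^0\rangle$ and $(a_n^{\ps{-1}}\rt f)(m) = \langle a_n^{\ps{-1}},m\ns{-1}\rangle\, f(m\ns{0})$. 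Your primary argument --- exhibiting $\vp$ as the transpose of $\psi$ and then transposing the commutation identities of Theorem \ref{thm-q-adic-vanEst-I} --- is sound, and the three inputs it rests on are all available: the transpose identity is, as you say, mere multiplicativity of the Killing pairing across tensor factors; the structure maps of the two $M^\vee$-complexes are adjoint under the evaluation pairings to those of the two $M$-complexes (on the coalgebra side this is the construction of Subsection \ref{subsect-module-coalg-contra-coeff}; on the comodule-algebra side note that the logical direction is the reverse of what you wrote --- there the cyclic $M^\vee$-complex is primary and the cocyclic $M$-complex is its dual --- but only the adjointness identities matter, and they are immediate from the explicit formulas); and the pairing $\langle\eta,\,m\ot_{U_q(\G{g})}\tilde{c}\,\rangle := \eta(\tilde{c})(m)$ separates points of $\Hom_{U_q(\G{g})}(\C{C}^{\ot\,(n+1)},M^\vee)$, so equality of all pairings upgrades to equality of maps --- this last nondegeneracy should be stated explicitly, since it is what allows cancelling the test element. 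As for what each route buys: yours is shorter, reuses Theorem \ref{thm-q-adic-vanEst-I} instead of redoing its computations, and makes visible that the two quantum van Est maps are transposes of one another, very much in the spirit of the paper's Janus-map philosophy; the paper's computation is self-contained, independent of Theorem \ref{thm-q-adic-vanEst-I}, and records the explicit identities. Finally, regarding bijectivity of $\vp$, which you single out as the main obstacle: the paper's own proof is equally silent on it --- both proofs establish only compatibility with the (co)cyclic structures, leaving invertibility implicitly to the nondegeneracy of the pairing between $\C{C}$ and $\C{O}_q(G/K)$ --- so your proposal, which at least flags the issue and sketches how to obtain the inverse, is no less complete than the paper's argument on that point.
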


\begin{proof}
We shall begin with the face operators. For $0 \leq i \leq n-1$,
\begin{align*}
& \d_i(\vp(f\square_{\C{O}(G_q)}\, a_0\odots a_n))(c^0\odots c^{n-1}) = \d_i(f\square_{\C{O}_q(G)}\, \tilde{a})(c^0\odots c^{n-1}) = \\
& (f\square_{\C{O}_q(G)}\, \tilde{a})(c^0\odots c^i\ps{1} \ot c^i\ps{2} \odots c^{n-1}) =  f \langle a_0,c^0\rangle \ldots \langle a_i,c^i\ps{1}\rangle\langle a_{i+1},c^i\ps{2}\rangle \ldots \langle a_n,c^{n-1}\rangle = \\
& \vp(\d_i(f\square_{\C{O}_q(G)}\, a_0\odots a_n))(c^0\odots c^{n-1}).
\end{align*}
The commutation with the last face operator, on the other hand, follows from
\begin{align*}
& \d_n(\vp(f\square_{\C{O}_q(G)}\, a_0\odots a_n))(c^0\odots c^{n-1})(m) = \d_n(f\square_{\C{O}_q(G)}\, \tilde{a})(c^0\odots c^{n-1}) (m) =\\
& f(m\ns{0})\langle a_0,c^0\ps{2}\rangle \langle a_1,c^1\rangle \ldots \langle a_{n-1},c^{n-1}\rangle \langle a_n,m\ns{-1}\cdot c^0\ps{1}\rangle = \\
& f(m\ns{0})\langle a_0,c^0\ps{2}\rangle \langle a_1,c^1\rangle \ldots \langle a_{n-1},c^{n-1}\rangle \langle  a_n^{\ps{-1}},m\ns{-1}\rangle\langle a_n^{\ps{0}}, c^0\ps{1}\rangle =\\
& (a_n^{\ps{-1}}\rt f)(m) \langle a_n^{\ps{0}}a_0,c^0\rangle \langle a_1,c^1\rangle \ldots \langle a_{n-1},c^{n-1}\rangle = \\
& \vp(\d_n(f\square_{\C{O}(G_q)}\, a_0\odots a_n))(c^0\odots c^{n-1})(m),
\end{align*}
for any $m\in M$. Let us next move to the degeneracies. For $0\leq j \leq n$, we have
\begin{align*}
& \s_j(\vp(f\square_{\C{O}(G_q)}\, a_0\odots a_n))(c^0\odots c^{n+1})(m) = \s_j(f\square_{\C{O}(G_q)}\, \tilde{a})(c^0\odots c^{n+1}) (m) = \\
& f \langle a_0,c^0\rangle \ldots \langle a_j,c^j\rangle  \ve(c^{j+1})\langle a_{j+1},c^{j+2}\rangle \ldots \langle a_n,c^{n+1}\rangle  = \\
& \vp(\s_j(f\square_{\C{O}_q(G)}\, a_0\odots a_n))(c^0\odots c^{n+1}).
\end{align*}
We finally present the commutation with the cyclic operator. To this end, it suffices to observe
\begin{align*}
& \tau_n(\vp(f\square_{\C{O}_q(G)}\, a_0\odots a_n))(c^0\odots c^n)(m) = \tau_n(f\square_{\C{O}_q(G)}\, \tilde{a})(c^0\odots c^n) (m) = \\
& f(m\ns{0}) \langle a_0,c^1\rangle \ldots \langle a_{n-1},c^n\rangle \langle a_n, m\ns{-1}\cdot c^0\rangle =  (a_n^{\ps{-1}}\rt f)(m) \langle a_n^{\ps{0}}, c^0\rangle\langle a_0,c^1\rangle \ldots \langle a_{n-1},c^n\rangle = \\
&  \vp(\tau_n(f\square_{\C{O}_q(G)}\, a_0\odots a_n))(c^0\odots c^n)(m)
\end{align*}
for any $m\in M$.
\end{proof}

\section*{Appendix}

The quantum van Est map we defined in this paper, and the quantum
characteristic map we constructed in~\cite{KaygunSutlu:CharMap} are
in fact two different faces of the same construction which we describe
in this section. We shall hereby consider $k$ a field of characteristic 0.

\subsection{The Janus map}~

Let $C$ be a coalgebra that \emph{acts} on an algebra $A$ through
\[
c(ab) = c_{(1)} (a)c_{(2)}(b) 
\] 
for any $c\in C$, and any $a,b \in A$. There is then a
pairing 
\begin{equation}\label{base-pairing}
  diag_{\Delta}(C^\ast(C)\otimes C^\ast(A)) \to C^\ast(A) 
\end{equation}
of the natural cocylic modules associated with $C$ and $A$. Moreover, $H$ being a Hopf algebra, if $C$ is a $H$-module coalgebra and $A$ is a $H$-module algebra so that
\[
h\cdot c(a) = (h\tr c)(a),
\]
then \eqref{base-pairing} may be lifted to a pairing 
\[
diag_\Delta(C^\ast_H(C,M)\otimes C^\ast_H(A,M)) \to C^\ast(A)
\]
of cocyclic modules, for any SAYD module $M$ over $H$, generalizing the Connes-Moscovici characteristic map~\cite[Theorem 6.2]{Kaygun:Uniqueness}.

Accordingly, we have the following.

\begin{theorem}\label{thm:main1}
  Let $H$ be a Hopf algebra, and $C$ a $H$-module coalgebra which acts on a $H$-module algebra $A$. Then, given any SAYD module $M$ over $H$, there is a pairing
\begin{equation}\label{eq:char-map}
    HC_{H}^p(C,M)\otimes HC_{H}^q(A,M)\to HC^{p+q}(A),
  \end{equation}
where $HC^*(A)$ is the ordinary cyclic cohomology of the algebra $A$.   
\end{theorem}

Let us note that the pairing in Theorem~\ref{thm:main1} may also be constructed using the dual cyclic modules, see \cite{KhalRang02,KhalRang05}. To this end, we substitute the Hopf-cyclic cohomologies with their dual theories, where the dual cyclic cohomology ${}^\circ HC^*(A)$ of $A$ is trivial in positive dimensions, i.e. ${}^\circ HC^0(A)=k$
and ${}^\circ HC^n(A)=0$ for $n\geq 1$, see for instance \cite[Rk. 1]{KhalRang05}. Hence, \eqref{eq:char-map} yields also the following.

\begin{theorem}\label{thm:main2}
Let $H$ be a Hopf algebra, and $C$ a $H$-module coalgebra which acts on a $H$-module algebra $A$. Then, given any SAYD module $M$ over $H$, there is a pairing
  \begin{equation}\label{eq:van-est}
    {}^{\circ}HC_{H}^p(C,M)\otimes {}^\circ HC_{H}^p(A,M)\to k
  \end{equation}
in dual cyclic homologies.
\end{theorem}


\subsection{The van Est pairing}~

In the present subsection we shall illustrate the above formalism in the concrete case of the
quantum groups in Section \ref{sect-q-adic-vanEst}. 

Following the (unique, non-degenerate) dual pairing 
\begin{equation}
   \left<\ \cdot\ ,\ \cdot\ \right>\colon U_q(\G{g})\otimes \C{O}_q(G)\to \B{C}
\end{equation} 
of \cite[Thm. 9.18]{KlimSchm-book}, we may introduce an action
 \begin{equation}\label{action-Uq-on-Oq}
   a\tr \varphi = \varphi_{(1)}\left<a,\varphi_{(2)}\right>
 \end{equation}
 of $U_q(\G{g})$ on $\C{O}_q(G)$.  The action \eqref{action-Uq-on-Oq} makes $\C{O}_q(G)$ a $U_q(\G{g})$-module algebra.

 \begin{corollary}\label{cor:van-est1}
Given any (right/left) SAYD module $M$ over $U_q(\G{g})$, there is a characteristic map in Hopf-cyclic cohomology
   \begin{equation}
     HC^p_{U_q(\G{g})}(U_q(\G{g}),M)\otimes HC^{\C{O}_q(G),\, q}(\C{O}_q(G),M^\vee)\to
     HC^{p+q}(\C{O}_q(G))
   \end{equation}
   and a van Est pairing in the dual cyclic cohomology of the form
   \begin{equation}
     {}^\circ HC_{U_q(\G{g})}^p(U_q(\G{g}),M)\otimes {}^\circ HC^{\C{O}_q(G),\,p}(\C{O}_q(G),M^\vee)\to \B{C}.
   \end{equation}
 \end{corollary}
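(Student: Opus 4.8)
The plan is to read the Corollary as the specialization of Theorem~\ref{thm:main1} and Theorem~\ref{thm:main2} to the triple $H = C = U_q(\G{g})$ and $A = \C{O}_q(G)$; everything then reduces to checking that the quantum Killing form data assembled above supply the three structures demanded by the Janus-map framework, namely an $H$-module algebra $A$, an $H$-module coalgebra $C$, and an equivariant action of $C$ on $A$. First I would record that two of these are already in hand: the computation preceding the statement shows that $a\tr\vp = \vp\ps{1}\langle a,\vp\ps{2}\rangle$ makes $\C{O}_q(G)$ a left $U_q(\G{g})$-module algebra, while $C = U_q(\G{g})$ carries the left regular $H$-module coalgebra structure $h\tr c := hc$, for which the compatibility $(h\tr c)\ps{1}\ot(h\tr c)\ps{2} = h\ps{1}\tr c\ps{1}\ot h\ps{2}\tr c\ps{2}$ is merely the multiplicativity of the coproduct of $U_q(\G{g})$.

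The substantive step is the equivariance of the $C$-action on $A$, i.e.\ the four compatibilities displayed in the Janus-map subsection. Because here $C$ and $H$ are \emph{the same} Hopf algebra acting through the \emph{same} map $\tr$, these four conditions collapse: the two multiplicativity relations $h\tr(ab) = (h\ps{1}\tr a)(h\ps{2}\tr b)$ and $c\tr(ab) = (c\ps{1}\tr a)(c\ps{2}\tr b)$ are one and the same module-algebra axiom verified above; the module-coalgebra relation is the one just noted; and the cross-compatibility $h(c\tr a) = (hc)\tr a$ is exactly the associativity $ab\tr\vp = a\tr(b\tr\vp)$ established in the preceding computation. Hence $C = U_q(\G{g})$ acts equivariantly on $A = \C{O}_q(G)$.

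With all hypotheses verified, Theorem~\ref{thm:main1} produces the characteristic map into $HC^{p+q}(\C{O}_q(G))$ and Theorem~\ref{thm:main2} produces the dual-cyclic van Est pairing into $k$, which is the content of the Corollary; the second tensor factor, written $HC^q_{\C{O}_q(G)}(\C{O}_q(G),M)$, is the Hopf-cyclic cohomology of the $U_q(\G{g})$-module algebra $\C{O}_q(G)$ transported through the nondegenerate Killing pairing to the dual $\C{O}_q(G)$-comodule description, in keeping with the conventions of Section~\ref{sect-q-adic-vanEst}. I do not expect a genuine technical obstacle: the only point needing care is the equivariance, and once one observes that the acting coalgebra and the acting Hopf algebra literally coincide, every compatibility is either the module-algebra axiom or the associativity of $\tr$, so no new computation beyond those already carried out is required.
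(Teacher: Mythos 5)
Your proposal is correct and takes essentially the same route as the paper: both specialize Theorems~\ref{thm:main1} and~\ref{thm:main2} to $H=C=U_q(\G{g})$ acting on $A=\C{O}_q(G)$ via the Killing form, and then correct the second tensor factor by identifying $HC^q_{\C{O}_q(G)}(\C{O}_q(G),M)$ with $HC^q_{U_q(\G{g})}(\C{O}_q(G),M)$ through the nondegenerate pairing. The paper phrases this last identification as a literal equality of equivariant complexes, $M\otimes_{\C{O}_q(G)}C^\bullet(\C{O}_q(G)) = M\otimes_{U_q(\G{g})}C^\bullet(\C{O}_q(G))$, on the grounds that the regular representation of $\C{O}_q(G)$ is defined through $U_q(\G{g})$ via the Killing form --- the same observation you invoke as transport through the nondegenerate pairing.
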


 \begin{proof}
It follows from Theorem~\ref{thm:main1} above that there is the characteristic map
  \begin{equation}
     HC^p_{U_q(\G{g})}(U_q(\G{g}),M)\otimes HC^q_{U_q(\G{g})}(\C{O}_q(G),M)\to
     HC^{p+q}(\C{O}_q(G))
   \end{equation}
   and from Theorem~\ref{thm:main2} that a \emph{van Est pairing} 
   \begin{equation}
     {}^\circ HC^p_{U_q(\G{g})}(U_q(\G{g}),M)\otimes {}^\circ HC^p_{U_q(\G{g})}(\C{O}_q(G),M)\to \B{C}
   \end{equation}
in dual cyclic cohomology.  

Now, $M$ being a right/left SAYD module over $U_q(\G{g})$, it may be endowed with a SAYD contra-module structure over $\C{O}_q(G)$, see Proposition \ref{prop-SAYD-dual} and Remark \ref{rk-contra} above. On the other hand, the left $U_q(\G{g})$-module algebra structure over $\C{O}_q(G)$ yields a right $\C{O}_q(G)$-comodule structure over it. As a result, we have
\begin{align}\label{identification}
\begin{split}
& C^n_{U_q(\G{g})}(\C{O}_q(G),M) = \Hom_{U_q(\G{g})}(M\ot \C{O}_q(G)^{\ot\,n+1}, \B{C}) \cong \\
& \hspace{2cm} \Hom(\C{O}_q(G)^{\ot\,n+1} \, \square_{\C{O}_q(G)}\, M, \B{C}) = C^{n,\,\C{O}_q(G)}(\C{O}_q(G),M^\vee)
\end{split}
\end{align}
via which the cocyclic complex that computes the Hopf-cyclic cohomology of the $U_q(\G{g})$-module algebra $\C{O}_q(G)$, with coefficients in the right/left SAYD module $M$ over $U_q(\G{g})$, is identified with the cocyclic complex\footnote[1]{Note that this cocyclic module is a slightly different version of the one given at the end of Subsection \ref{subsect-coord-alg-funct}, or at Subsection \ref{subsect-contra-comod-alg}. The structure maps may be pulled from those of $C^n_{U_q(\G{g})}(\C{O}_q(G),M)$ using \eqref{identification}. For the sake of simplicity we shall not present the structure maps of this complex here, yet, by a slight abuse of notation we shall denote it as $C^{\ast,\,\C{O}_q(G)}(\C{O}_q(G),M^\vee)$.} that computes the Hopf-cyclic cohomology of the $\C{O}_q(G)$-module coalgebra $\C{O}_q(G)$, with coefficients in the left/right SAYD contra-module $M$ over $\C{O}_q(G)$. As such,
\[
HC_{U_q(\G{g})}^\ast(\C{O}_q(G),M) = HC^{\ast,\,\C{O}_q(G)}(\C{O}_q(G),M^\vee),
\]
and hence in dual theory
\[
{}^\circ HC_{U_q(\G{g})}^\ast(\C{O}_q(G),M) = {}^\circ
   HC^{\ast,\,\C{O}_q(G)}(\C{O}_q(G),M^\vee).
\] 
Both assertions thus follow.
 \end{proof}


\end{document}